\definecolor{azure(colorwheel)}{rgb}{0.0, 0.5, 1.0}
\definecolor{hanpurple}{rgb}{0.32, 0.09, 0.98}
\definecolor{iris}{rgb}{0.35, 0.31, 0.81}
\definecolor{byzantine}{rgb}{0.74, 0.2, 0.64}
\definecolor{ashgrey}{rgb}{0.7, 0.75, 0.71}
\definecolor{battleshipgrey}{rgb}{0.52, 0.52, 0.51}
\let\reftagform@=\tagform@
\def\tagform@#1{\maketag@@@{(\ignorespaces\textcolor{purple}{#1}\unskip\@@italiccorr)}}
\renewcommand{\eqref}[1]{\textup{\reftagform@{\ref{#1}}}}
\DeclareUrlCommand\ULurl@@{%
  \def\UrlLeft{\uline\bgroup}%
  \def\UrlRight{\egroup}}
\def\ULurl@#1{\hyper@linkurl{\ULurl@@{#1}}{#1}}
\DeclareRobustCommand*\ULurl{\hyper@normalise\ULurl@}
\def\lessim{\ \lower4pt\hbox{$
		\buildrel{\displaystyle <}\over\sim$}\ }
\def\gessim{\ \lower4pt\hbox{$\buildrel{\displaystyle >}
		\over\sim$}\ }
\numberwithin{equation}{section}
\renewcommand{\@secnumfont}{\bfseries}
\newtheorem{thm}{Theorem}[section]    
\newtheorem{prop}[thm]{Proposition}
\theoremstyle{definition}
\newcommand{\Z}{\mathbb{Z}}
\newcommand{\R}{\mathbb{R}}
\newcommand{\B}{\textbf}
\newcommand{\eps}{\epsilon}
\newcommand{\1}{\B{\rm \B{1}}}
\newcommand{\E}{\mathbb{E}}
\newcommand{\prob}{\mathbb{P}}
\newtheorem{claim}{Claim}
\newtheorem{lemma}{Lemma}
\DeclareMathOperator*{\argmin}{arg\,min}
\DeclareMathOperator{\calP}{\mathcal{P}}
\begin{document}


\title{Equilibrium Distributions for t-distributed Stochastic Neighbour Embedding}


\author{Antonio Auffinger}
\address{Department of Mathematics, Northwestern University}
\email{tuca@northwestern.edu}


\author{Daniel Fletcher}
\address{Department of Mathematics, Northwestern University}
\email{fletcher@math.northwestern.edu}



\begin{abstract}
We study the empirical measure of the output of the t-distributed stochastic neighbour embedding algorithm when the initial data is given by $n$ independent, identically distributed inputs. We prove that under certain assumptions on the distribution of the inputs, this sequence of measures converges to an equilibrium distribution, which is described as a solution of a variational problem. 
\end{abstract}

\maketitle


\section{Introduction}

First introduced in 2008 by van der Maaten and Hinton \cite{JMLR:v9:vandermaaten08a} as a variation over the earlier stochastic neighbour embedding (SNE) method\cite{hinton2002stochastic}, t-distributed stochastic neighbour embedding (t-SNE) is a dimension reduction technique that is particularly well-suited for visualization of high-dimensional datasets. This technique has seen a wide and phenomenal use of applications in a range of scientific disciplines, including biology \cite{biology-example, biology-example2}, engineering \cite{engineering-example} and physics \cite{physics-example}. Despite the impressive empirical performance of t-SNE, and improvements in the implementation of the algorithm through various mechanisms \cite{Linderman2019, KMJ, JMLR:v15:vandermaaten14a}  so far there has been relatively little progress in mathematics in trying to rigorously understand the theory behind t-SNE.

Questions such as ``What happens when the initial dataset is pure noise?'', or ``How strong should a signal be  to visualize multiple clusters?'' or ``When  does the method work (and why does it work so well)?'' are surprisingly underdeveloped from a rigorous mathematical perspective. The goal of this paper is to address the first of these questions in the case where the number of data points diverges. Roughly speaking, in our main result, under some mild assumptions on the noise distribution, we prove the existence of an equilibrium measure (or a null-distribution) for t-SNE as the number of inputs goes to infinity.

Let $d, s \geq 2$ and $n\geq 1$.  The goal of the t-SNE algorithm is to find a collection of vectors $Y_1, \ldots, Y_n$ in $\mathbb R^s$ that is a low dimensional representation of input vectors $X_1, \ldots, X_n \in \mathbb R^d$. This is done by constructing associated measures as follows. Given $X_1, \ldots, X_n \in \mathbb R^d$, for $1\leq i, j \leq n$, we set 

\begin{equation}\label{eq:pij}
    p_{j|i} 
    = 
    \frac{
        \exp(-\lVert X_i - X_j \rVert^2 / 2 \sigma_i^2)
    }{
        \sum_{k \neq i} \exp(-\lVert X_i - X_k \rVert^2 / 2 \sigma_i^2)
    },
\end{equation}
where the sequence of numbers $\sigma_i$ satisfies for each $i = 1, \ldots, n,$
\begin{equation}\label{existencePerp}
    -\sum_{j =1, j\neq i}^n p_{j|i} \log p_{j|i} = \log \text{Perp}.
\end{equation}
The parameter $\text{Perp} = \text{Perp}(n)$ is called the perplexity of the model and we will assume that 
\begin{align}\label{eq:Perpl}
\text{Perp}(n) = n \rho
\end{align}
for some $\rho \in (0,1)$.
We then let for $1\leq i \neq j \leq n$,

\begin{align*}
    p_{ij}  = 
    \frac{p_{i|j} + p_{j|i}}{2n} 
    = 
    \frac{1}{2n} \left(
        \frac{
            \exp(-\lVert X_i - X_j \rVert^2 / 2 \sigma_i^2)
        }{
            \sum_{k \neq i} \exp(-\lVert X_i - X_k \rVert^2 / 2 \sigma_i^2)
        } 
        + 
        \frac{
            \exp(-\lVert X_i - X_j \rVert^2 / 2 \sigma_j^2)
        }{
            \sum_{k \neq j} \exp(-\lVert X_k - X_j \rVert^2 / 2 \sigma_j^2)
        } 
    \right),
\end{align*}
and we set $p_{ii}=0$.

For a collection of vectors $Y_1, \ldots, Y_n$ in $\mathbb R^s$ we  define for $1 \leq i \neq j \leq n,$
\begin{align*}
    q_{ij}=
    \frac{
        (1 + \lVert Y_i - Y_j \rVert^{2})^{-1}}{
        \sum_{k \neq \ell} (1 + \lVert Y_k - Y_\ell \rVert^{2})^{-1}}.
\end{align*}
and $q_{ii}=0$.  

Last, we define for $X=(X_1, \ldots, X_n) \in (\mathbb R^d)^n$ and $Y=(Y_1, \ldots, Y_n) \in \mathbb (\mathbb R^s)^n,$
\begin{align}\label{eq:energy}
    L_{n, \rho}(X, Y) 
    = 
    \sum_{1\leq i \neq j \leq n} p_{ij} \log\left( \frac{p_{ij}}{q_{ij}} \right).
\end{align}
The function $L_{n,\rho}$  is the relative entropy between the probability distributions constructed from the collections $(p_{ij})_{1\leq i,j \leq n}$ and $(q_{ij})_{1\leq i ,j \leq n}$. The output of t-SNE is the minimizer of \eqref{eq:energy}:
\begin{align}\label{eq:variation}
Y^* = \argmin_{Y \in (\mathbb R^s)^n} L_{n,\rho}(X, Y).
\end{align}
Existence of solutions to the problems in \eqref{existencePerp} and \eqref{eq:variation} are discussed in Section \ref{sec:properties_of_F_and_sigma}.

We denote $\mathcal M(\mathbb R^d)$ as the space of Borel probability measures on $\mathbb R^d$. Let $\mu_X \in  \mathcal M(\mathbb R^d)$ and $X_1, \ldots, X_n$ be independent, identically distributed random variables with common probability measure $\mu_X$. Let $Y^*= (Y_1, \dots, Y_n)$ be the output given by t-SNE and define the empirical measure 
\begin{equation}\label{empiricalmeasure}
    \mu_n = \frac{1}{n} \sum_i \delta_{(X_i, Y_i)} \in \mathcal M(\mathbb R^{d} \times \mathbb R^{s}).
\end{equation}
We now introduce the setup that describes the limit measures for the sequences $\mu_{n}$.  

Let $\mu \in \mathcal M (\mathbb R^{d} \times \mathbb R^{s})$. For $\sigma \in \R$, $\rho \in (0,1)$  define $F_{\rho, \mu}:\mathbb R^{d} \times \mathbb R \to \mathbb R$  as
\begin{align*}
    F_{\rho ,\mu}(x,\sigma) 
    & = 
    \int 
    \frac{
        \exp(-\lVert x - x' \rVert^2 / 2 \sigma^2)
    }{
        \int \exp(-\lVert x - x' \rVert^2 / 2 \sigma^2) \, d\mu(x')
    } 
    \log\left( 
        \frac{
            \exp(-\lVert x - x' \rVert^2 / 2 \sigma^2)
        }{
            \int \exp(-\lVert x - x' \rVert^2 / 2 \sigma^2) \, d\mu(x')
        } 
    \right) \, d\mu(x') 
    + \log \rho.
\end{align*}
Note that the integrations above are on the space $\mathbb R^{d} \times \mathbb R^{s}$. The integrands are constant in the remaining last  $s$ variables and the notation $d\mu(x')$ is to highlight that we are integrating on the variable $x' \in \mathbb R^{d}$.

Next, let $\sigma^*_{\rho , \mu}: \R^d \to \R$ be defined as the unique solution of the equation
\begin{equation}\label{SolutionSigma}
    F_{\rho , \mu}(x, \sigma^*_{\rho , \mu}(x)) = 0.
\end{equation}
Existence, uniqueness and properties of $\sigma_{\rho, \mu}^*$ are established in Section \ref{sec:properties_of_F_and_sigma}.

Now, given a function $\psi : \R^d \to \R^{+}$, define the function $p_{\psi}:\mathbb R^{d} \times \mathbb R^{d} \to \mathbb R$ as 
\begin{align*}
    p_{\psi}(x,x')
    & = 
    \frac{1}{2}
    \left(
        \frac{
            \exp(-\lVert x - x' \rVert^2 / 2 \psi(x)^2)
        }{
            \int \exp(-\lVert x - x' \rVert^2 / 2 \psi(x)^2) \, d\mu(x')
        }
        + 
        \frac{
            \exp(-\lVert x - x' \rVert^2 / 2 \psi(x')^2)
        }{
            \int \exp(-\lVert x - x' \rVert^2 / 2 \psi(x')^2) \, d\mu(x)
        }
    \right)
  \end{align*}
and also let $q:\mathbb R^{s}\times \mathbb R^{s} \to \mathbb R$ be given by
\begin{equation}
    q(y,y') = 
    \frac{
        (1 + \lVert y - y' \rVert^2)^{-1}
    }{
        \iint (1 + \lVert y - y' \rVert^2)^{-1} \, d\mu(y) d\mu(y')}.
\end{equation}

Lastly, define 
\begin{equation}\label{eq:Ifunctional}
    I_{\rho}(\mu)
    =
    \iint 
    p_{\sigma^*_{\rho , \mu}}(x,x')
    \log \left(
        \frac{p_{\sigma^*_{\rho , \mu}}(x,x')}{q(y,y')}
    \right)
    \, d\mu(x,y) d\mu(x',y')
\end{equation}
and for a given $\mu_{X} \in \mathcal M (\mathbb R^{d})$ write $\calP_{X}$ for the set of probability measures on $\R^d \times \R^s$ whose marginal coincides with $\mu_{X}$, that is,
\[
    \calP_X
    =
    \{
        \mu \in \mathcal M(\mathbb R^{d} \times \mathbb R^{s}): 
        \text{the } \mathbb R^{d} \text{ marginal of } \mu \text{ is } \mu_X
    \}.
\]
Moreover, define
\[
    \widetilde{\calP}_X
    =
    \left\{
        \mu \in \calP_X: 
        \int 
            (p(x,x') - q_{}(y,y'))
            \frac{y - y'}{1 + \lVert y - y' \rVert^2}
        \, d\mu(x',y') 
        = 0, 
        \, \mu(x,y)\text{-a.e.}
    \right\}.
\]
We can now state our main result.

\begin{thm}\label{thm:precise_theorem}
Assume $\mu_{X}$ has a $C^{1}$ density $f(x)$ and compact support. Let $X_i \in \R^d$ be independent, identically distributed random variables with common distribution $\mu_{X}$.  
Then for any $\rho \in (0,1)$,
\[
    \lim_{n \to \infty} \inf_Y L_{n, \rho}(X, Y)
    =
    \inf_{\mu \in \widetilde\calP_X} I_{\rho}(\mu).
\]
Moreover, there exist a measure $\mu^*$ and a sub-sequence $n_k$ such that $\mu_{n_k}$ converges weakly to $\mu^*$ and $I_{\rho}(\mu^*) = \inf_{\mu \in \widetilde\calP_X} I_{\rho}(\mu)$.
\end{thm}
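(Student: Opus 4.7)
My plan is a $\Gamma$-convergence-style argument, split into four pieces: a reduction of the discrete energy to a functional of the empirical measure $\mu_n$, a tightness plus liminf inequality along weakly convergent subsequences, construction of a recovery sequence for every $\mu \in \widetilde{\calP}_X$, and finally identification of any subsequential limit of $\mu_n$ (along discrete minimizers) as a point of $\widetilde{\calP}_X$. The starting technical step is to show that, for i.i.d.\ $X_i \sim \mu_X$, the discrete bandwidths $\sigma_i$ solving \eqref{existencePerp} concentrate uniformly around $\sigma^*_{\rho,\mu_X}(X_i)$: the left-hand side of \eqref{existencePerp} minus $\log(n\rho)$ is the empirical analogue of $F_{\rho,\mu_X}(X_i,\cdot)$, so a uniform strong law of large numbers over the compact support of $\mu_X$, together with the monotonicity and smoothness of $F_{\rho,\mu_X}$ established in Section~\ref{sec:properties_of_F_and_sigma}, gives $\max_i \lvert \sigma_i - \sigma^*_{\rho,\mu_X}(X_i) \rvert \to 0$ almost surely. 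This reduces $L_{n,\rho}(X,Y)$, up to $o(1)$, to a double empirical integral against $\mu_n \otimes \mu_n$ of an explicit integrand in $(x,x',y,y')$ that is continuous on compact sets.

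Next I would establish tightness and the liminf inequality. Translation invariance in $Y$ lets me normalize $\sum_i Y_i^* = 0$, and the constant configuration $Y_i \equiv 0$ provides a universal upper bound for $\inf_Y L_{n,\rho}(X,Y)$. Since $-\log q_{ij}$ contains $\log(1+\lVert Y_i - Y_j\rVert^2)$ integrated against probability weights $p_{ij}$, this bound forces $\tfrac{1}{n^2}\sum_{i,j}\log(1+\lVert Y_i^* - Y_j^*\rVert^2) = O(1)$, which yields tightness of the $Y$-marginal of $\mu_n$; together with Glivenko--Cantelli for the $X$-marginal, $\{\mu_n\}$ is tight. Along any weakly convergent subsequence $\mu_{n_k} \to \mu^* \in \calP_X$, the integral representation of the previous paragraph combined with standard lower semicontinuity of relative entropy gives
\[
    \liminf_{k \to \infty} L_{n_k,\rho}\bigl(X, Y^{(n_k)}\bigr) \geq I_\rho(\mu^*).
\]

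For the upper bound I fix $\mu \in \widetilde{\calP}_X$, disintegrate $\mu = \mu_X \otimes \mu(\cdot\mid x)$, and draw $Y_i^{(n)}$ independently from the conditional kernel $\mu(\cdot \mid X_i)$. A law of large numbers for the triangular array $\{(X_i, Y_i^{(n)})\}$ gives $\mu_n \Rightarrow \mu$ almost surely, and the same integral representation, combined with the LLN on $\mu_n \otimes \mu_n$ and uniform continuity of the integrand on the compact support, yields $L_{n,\rho}(X, Y^{(n)}) \to I_\rho(\mu)$ almost surely. Taking the infimum over $Y$ on the left and the infimum over $\widetilde{\calP}_X$ on the right gives $\limsup_n \inf_Y L_{n,\rho}(X,Y) \leq \inf_{\widetilde{\calP}_X} I_\rho$.

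The hardest step, and the main obstacle, is showing that any subsequential weak limit $\mu^*$ of $\mu_n$ along discrete minimizers lies in $\widetilde{\calP}_X$, so that the liminf inequality closes the loop. At the discrete minimizer the first-order condition $\nabla_{Y_i} L_{n,\rho}(X,Y^*) = 0$ reads, after direct computation,
\[
    \frac{1}{n}\sum_{j \neq i}\bigl(p_{ij} - q_{ij}\bigr)\frac{Y_i^* - Y_j^*}{1+\lVert Y_i^* - Y_j^*\rVert^2} = 0,
\]
which is the discrete analogue of the identity defining $\widetilde{\calP}_X$. Passing it to the limit is delicate because the left-hand side is a conditional expectation at the data point $(X_i, Y_i^*)$, not a bounded continuous function of $\mu_n$. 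I would test it against arbitrary continuous compactly supported $\phi(x,y)$ and use the tightness and uniform control from the earlier steps, the continuity of $\sigma^*_{\rho,\mu}$ in $\mu$, and the $C^{1}$ regularity of $f$ to pass the averaged first-order condition to the limit as a measure-theoretic identity, concluding $\mu^* \in \widetilde{\calP}_X$. Combined with the liminf inequality this supplies the matching lower bound $\inf_{\widetilde{\calP}_X} I_\rho \leq \liminf_n \inf_Y L_{n,\rho}(X,Y)$, identifies $\mu^*$ as a minimizer, and completes the theorem.
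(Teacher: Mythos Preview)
Your outline is essentially the paper's own architecture: convergence of the discrete bandwidths $\sigma_i$ to $\sigma^*_{\rho,\mu_X}(X_i)$ (Section~\ref{sec3} and Proposition~\ref{prop:sigma-uniform-convergence}), a liminf inequality along subsequential weak limits of $\mu_n$ via Fatou (Lemma~\ref{coffeemugonMonday} and Proposition~\ref{prop-liminf-d_n-geq-I(lambda)}), a recovery sequence for the limsup obtained by sampling $(X_i,Y_i)$ from an optimal $\mu^*$ (Proposition~\ref{prop:limsup_dn(X)_upper_bound}), and passage of the discrete first-order condition $\nabla_{Y_i}L_{n,\rho}=0$ to the limit, tested against $\phi(x,y)$, to certify membership in $\widetilde{\calP}_X$ (Appendix, Proposition~\ref{prop:appendix-prop}). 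Even your identification of the last step as the delicate one matches the paper's emphasis.

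The one substantive point you skate over is in the upper bound. The integrand $p(x,x')\log(1+\lVert y-y'\rVert^2)$ is unbounded in $y,y'$, so the passage $\iint(\cdots)\,d\mu_n d\mu_n \to \iint(\cdots)\,d\mu\,d\mu$ does not follow from weak convergence and a law of large numbers for an arbitrary $\mu\in\widetilde{\calP}_X$; your phrase ``uniform continuity of the integrand on the compact support'' is only legitimate if $\mu$ has compact $Y$-support, which is not part of the definition of $\widetilde{\calP}_X$. The paper closes this gap by running the recovery argument only for the minimizer $\mu^*$ and invoking Theorem~\ref{thm2} (proved independently in Section~5 using the Euler--Lagrange identity defining $\widetilde{\calP}_X$) to guarantee that $\mu^*$ has compact support in $Y$. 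Once you restrict to $\mu^*$ and insert that ingredient, your plan and the paper's proof coincide.
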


We can also infer that the limiting measure has compact support. 

\begin{thm}\label{thm2} Under the same assumptions of Theorem \ref{thm:precise_theorem}, $\mu^*$ has compact support.
\end{thm}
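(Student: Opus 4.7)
Since the $\R^d$-marginal of $\mu^*$ equals $\mu_X$, which is compactly supported by hypothesis, it suffices to bound the support of the $\R^s$-marginal $\mu^*_Y$. The plan is to exploit the first-order condition defining $\widetilde{\calP}_X$: for $\mu^*$-a.e.\ $(x,y)$,
\[
    \int \bigl(p(x,x') - q(y,y')\bigr)\frac{y-y'}{1+\|y-y'\|^2}\, d\mu^*(x',y') = 0, \qquad (\star)
\]
with $p = p_{\sigma^*_{\rho,\mu^*}}$. Heuristically, a point $y$ at very large distance from the bulk of the support feels an attractive $p$-contribution of order $1/\|y\|$ but only a repulsive $q$-contribution of order $1/\|y\|^3$; this imbalance is incompatible with $(\star)$ unless $\mu^*_Y$ itself has heavy tails.

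Suppose for contradiction that $\mu^*_Y$ has unbounded support. Pick $(x_n,y_n)\in\supp\mu^*$ with $R_n := \|y_n\|\to\infty$, and extract a subsequence with $x_n\to x_\infty \in \supp \mu_X$. Take the inner product of $(\star)$ with $\hat y_n := y_n/R_n$ and split the integrals according to whether $\|y'\|\leq R_n/2$. A direct Taylor expansion on the bounded piece yields uniformly
\[
    \frac{\hat y_n\cdot (y_n-y')}{1+\|y_n-y'\|^2} = \frac{1}{R_n} + O\!\Bigl(\frac{1}{R_n^2}\Bigr),
    \qquad
    q(y_n, y') = O\!\Bigl(\frac{1}{R_n^2}\Bigr).
\]
Consequently the bulk-LHS of the inner product equals $(1+o(1))P(x_n)/R_n$, where $P(x):=\int p(x,x')\,d\mu_X(x')$ is continuous and bounded below by some $c_0>0$ on the compact set $\supp\mu_X$, while the bulk-RHS is $O(1/R_n^3)$. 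The remaining tail contributions are bounded in absolute value by a universal constant times $\mu^*_Y(\|y\|>R_n/2)$. Forcing equality in $(\star)$ then yields $\mu^*_Y(\|y\|>R_n/2)\gtrsim 1/R_n$ for large $n$. Selecting a subsequence $R_{n_{k+1}}\geq 2R_{n_k}$ and applying the layer-cake formula gives
\[
    \int \|y\|\, d\mu^*_Y(y) = \int_0^\infty \mu^*_Y(\|y\|>R)\, dR \;\geq\; \sum_k \frac{R_{n_k}}{4}\cdot \frac{c_0}{R_{n_k}} = +\infty.
\]

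To derive a contradiction, we show independently that $\int \|y\|\,d\mu^*_Y(y)<\infty$. The natural route is through the discrete problem: after fixing the translation invariance of $L_{n,\rho}$ by imposing, say, $\sum_i Y^*_i=0$, establish a uniform first-moment bound $(1/n)\sum_i \|Y^*_i\|\leq C$ on the discrete minimizer; lower semi-continuity of the first-moment functional along the weakly convergent subsequence from Theorem~\ref{thm:precise_theorem} then transfers this to $\int \|y\|\, d\mu^*_Y(y) \leq C<\infty$. I expect the main obstacle to be precisely this uniform discrete first-moment bound, which requires a coercivity analysis of $L_{n,\rho}$: after writing $L_{n,\rho}=\sum_{i\neq j} p_{ij}\log(1+\|Y_i-Y_j\|^2) + \log Z + (\text{terms independent of }Y)$, one compares any configuration of large average $\|Y_i\|$ against a more concentrated test configuration to show the cost strictly decreases, forcing the minimizer to be concentrated on average. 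Combined with the tail lower bound of the previous paragraph, this yields the desired contradiction and hence the compactness of $\supp \mu^*$.
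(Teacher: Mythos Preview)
Your argument has a genuine gap at the last step. The force-balance analysis of $(\star)$ leading to the tail lower bound $\mu^*_Y(\|y\|>R_n/2)\gtrsim 1/R_n$ is essentially correct (with the minor caveat that your ``Taylor expansion'' is not uniform on $\{\|y'\|\le R_n/2\}$; one should split off a fixed bulk $\{\|y'\|\le M\}$ first). The problem is the contradiction you seek: you never actually prove that $\int\|y\|\,d\mu^*_Y<\infty$. The route you sketch---a uniform first-moment bound on the discrete minimizers $Y^*$---is not established anywhere in the paper, is translation-sensitive, and amounts to a coercivity statement for $L_{n,\rho}$ that is at least as delicate as the theorem itself. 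Without it the argument does not close.

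The paper's proof avoids moment bounds entirely by using $(\star)$ in a different way. The key observation is that the left-hand side of $(\star)$ is exactly $\tfrac{1}{2}\nabla_y$ of the scalar
\[
    \Phi(x,y)\;:=\;\int p(x,x')\log\bigl(1+\|y-y'\|^2\bigr)\,d\mu^*
    \;+\;\frac{\int (1+\|y-y'\|^2)^{-1}\,d\mu^*}{\iint (1+\|y-y'\|^2)^{-1}\,d\mu^*\,d\mu^*}.
\]
Hence $(\star)$ says $\Phi(x,y)=h(x)$ for some function of $x$ alone, continuous on the compact set $\supp\mu_X$. Averaging $\Phi$ over $D_n=\{\|y\|\ge n\}$ (normalised by $\mu^*(D_n)$), the right-hand side stays bounded by $\sup|h|$ plus a bounded term coming from the second summand. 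But the first summand on the left, restricted to $\|y'\|\le n/2$ and using $p(x,x')\ge c>0$ on $\supp\mu_X\times\supp\mu_X$, is at least $\tfrac{c}{2}\log(1+n^2/4)\to\infty$. This contradiction uses only boundedness of $h$, not any integrability of $\|y\|$. You were one antiderivative away from a complete proof: integrating $(\star)$ in $y$ before, rather than after, analysing large $\|y\|$ is what makes the argument close.
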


\begin{figure}\label{SimFig}
\includegraphics[width=0.5\textwidth]{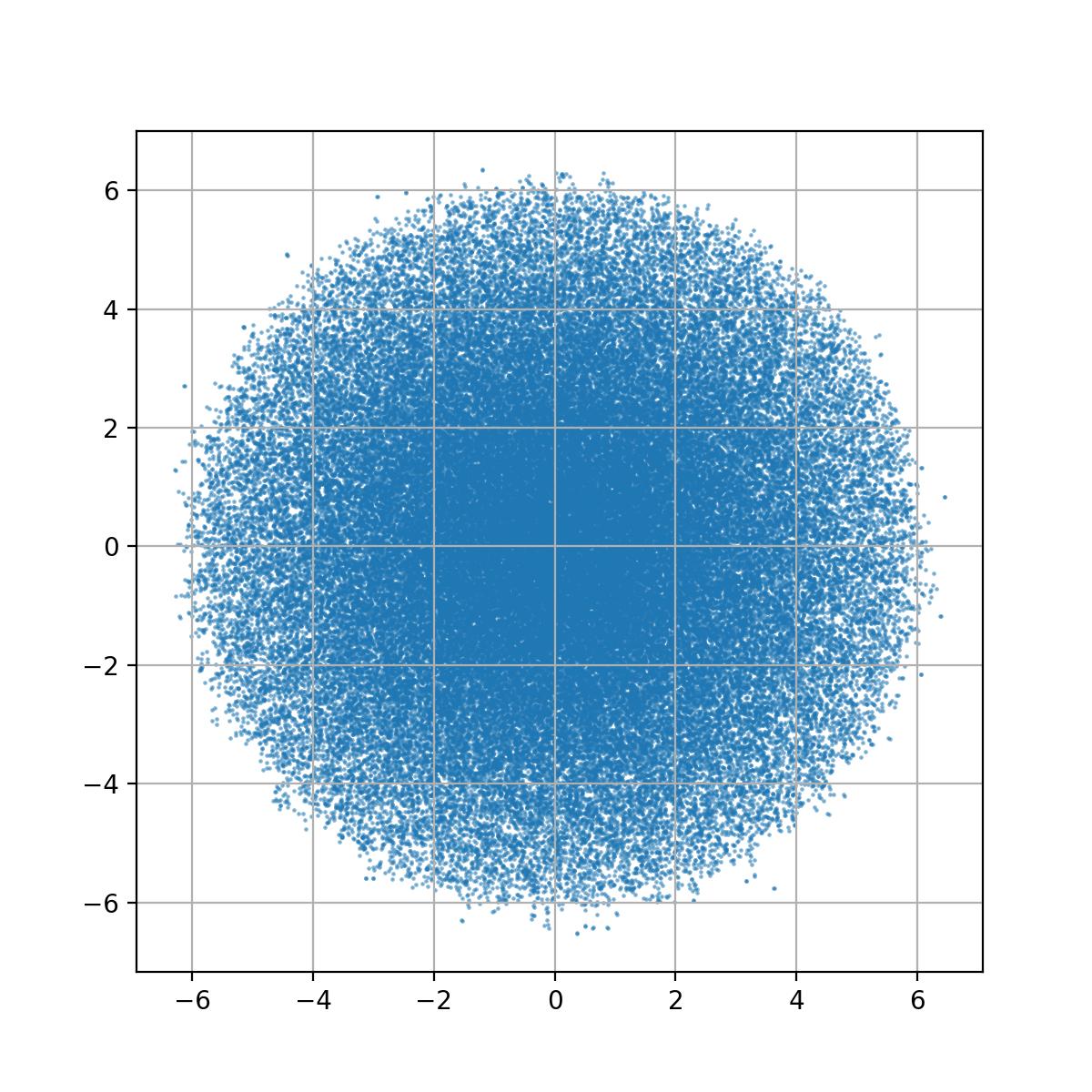}
\caption{Simulation of a t-SNE output with $100,000$ independent input data distributed according to an uniform distribution on $[-1,1]^{d}$ in dimension $500$.}
\end{figure}

We conclude this section with some historical remarks and a brief description on the structure of the paper. As far as we know, Theorem \ref{thm:precise_theorem} is the first result in the literature that characterizes the null distribution of t-SNE. Given the exceptional empirical performance of t-SNE, it is surprising to us that theoretical results are still scarce. Significant achievements were made by Linderman and Steinerberger \cite{linderman-steinerberger} after earlier work by Shaham and Steinerberger \cite{SSArxiv}, who gave conditions under which the algorithm successfully separates clusters (in a suitably formal sense). More exploration on this question was performed by Arora, Hu and Kothari  \cite{pmlr-v75-arora18a} who studied performance of the gradient descent performed by t-SNE under certain deterministic conditions on the ground-truth clusters. More recently, Cai and Ma \cite{cai-ma-2022} formalized and made rigorous an observation first made by Linderman and Steinerberger \cite{linderman-steinerberger}: a relationship to spectral clustering when t-SNE is ran in what is called the early exaggeration phase. For a comprehensive survey on other aspects of stochastic neighbor embeddings and their relationship to other dimension reduction techniques readers are invited to check Chapter 15 in \cite{book2} and the references therein.

The paper is organized as follows. In the next section, we show existence and uniqueness of the solution of \eqref{SolutionSigma}. This section is where we use the assumptions of a $C^{1}$ density and compact support on $\mu_{X}$. In Section \ref{sec3}, we show uniform convergence of the sum \eqref{eq:energy} towards $F_{\rho,\mu}$ and in Section 4, we prove Theorem \ref{thm:precise_theorem}. Theorem \ref{thm2} is proven in Section 5. The Appendix contains a separate analysis of the space $ \widetilde{\calP}_X$ which may be of independent interest.

%
%

\subsection{Acknowledgments} Antonio Auffinger's research is partially supported by NSF Grant CAREER DMS-1653552, Simons Foundation/SFARI (597491-RWC), and NSF Grant 2154076. Daniel Fletcher's  research is partially supported by the Simons Foundation/SFARI (597491-RWC). Both authors would like to thank Eric Johnson, Madhav Mani,  and William Kath for inspiring discussions on dimension reduction methods.

\section{Existence and properties of $F_{\rho , \mu}$, $\sigma^*_{\rho , \mu}$}\label{sec:properties_of_F_and_sigma}

In this section, we establish existence and uniqueness of solutions of the problem \eqref{SolutionSigma}.

Note that $F_{\rho,\mu}$ is $C^{\infty}$ and we can write
\begin{align*}
    F_{\rho , \mu}(x,\sigma) = &-\int
    \frac{\lVert x - x' \rVert^2}{2 \sigma^2}
    \frac{
        \exp(-\lVert x - x' \rVert^2 / 2 \sigma^2)
        }{
        \int \exp(-\lVert x - x' \rVert^2 / 2 \sigma^2) \, d\mu(x')
        } 
    \, d\mu(x') \\
    &- 
    \log\left(
        \int \exp(-\lVert x - x' \rVert^2 / 2 \sigma^2) \, d\mu(x')
    \right)
    + \log \rho.
\end{align*}

\subsection{Uniqueness and smoothness of $\sigma^*_{\rho , \mu}$}\label{subsec:uniqueness-of-sigma}

We start with the following Lemma that establishes lower bounds for $F_{\rho,\mu}$. 
\begin{lemma}\label{lemma_F_is_unbounded}
If $\mu$ has a $C^{1}$ density $f$ such that both $f$ and $\lVert \nabla f \rVert$ are bounded then there exists a constant $C>0$ so that 
\[
    F_{\rho, \mu}(x,\sigma) \geq -C \log \sigma - \log f(x) + \log \rho.
\]
In particular, $F_{\rho, \mu}(x,\sigma) \to \infty$ as $\sigma \to 0$.
\end{lemma}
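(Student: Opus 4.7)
The key step is the algebraic identity
\[
F_{\rho,\mu}(x,\sigma) = -\E_p[U] - \log Z(x,\sigma) + \log\rho,
\]
where $U(x') = \lVert x-x'\rVert^2/(2\sigma^2)$, $Z(x,\sigma) = \int e^{-U(x')}\,d\mu(x')$, and $p(x') = e^{-U(x')}/Z(x,\sigma)$ is the induced $\mu$-density. I plan to bound the two pieces $-\log Z$ and $\E_p[U]$ separately using the $C^1$ regularity of $f$, splitting into cases at the threshold $\sigma_*(x) := c\,f(x)/\lVert\nabla f\rVert_\infty$ for an appropriate absolute constant $c > 0$.

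In the principal regime $\sigma \leq \sigma_*(x)$, the substitution $u = (x'-x)/\sigma$ together with the $C^1$ estimate $|f(x+\sigma u) - f(x)| \leq \lVert\nabla f\rVert_\infty \sigma\lVert u\rVert$ yields the Taylor-type bounds
\[
\bigl|Z(x,\sigma) - f(x)(2\pi)^{d/2}\sigma^d\bigr| \leq \lVert\nabla f\rVert_\infty A_d\sigma^{d+1}, \qquad \bigl|\textstyle\int U e^{-U}f\,dx' - \tfrac{d}{2}f(x)(2\pi)^{d/2}\sigma^d\bigr| \leq \lVert\nabla f\rVert_\infty D_d\sigma^{d+1},
\]
with dimensional constants $A_d = \int \lVert u\rVert e^{-\lVert u\rVert^2/2}\,du$ and $D_d = \int \tfrac{\lVert u\rVert^3}{2}e^{-\lVert u\rVert^2/2}\,du$. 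Choosing $c$ small enough that the remainders are dominated by the leading Gaussian terms, one gets simultaneously $Z(x,\sigma)$ comparable to $f(x)(2\pi)^{d/2}\sigma^d$ and $\E_p[U] \leq d/2 + K_0$ (a constant depending only on $d$, $\lVert f\rVert_\infty$, $\lVert\nabla f\rVert_\infty$). Plugging both estimates into the identity produces $F_{\rho,\mu}(x,\sigma) \geq -d\log\sigma - \log f(x) + \log\rho - K_1$, and the additive constant $-K_1$ is absorbed into $-C\log\sigma$ by taking $C$ slightly larger than $d$.

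In the complementary regime $\sigma > \sigma_*(x)$, the threshold condition gives $-\log f(x) \leq \log(\lVert\nabla f\rVert_\infty/c) - \log\sigma$, so the desired right-hand side is controlled by a linear term in $\log\sigma$ plus constants. For this range I fall back on the trivial bound $F_{\rho,\mu}(x,\sigma) \geq \log\rho$, which follows from Jensen's inequality applied to the convex function $t\log t$ together with $\int p\,d\mu = 1$; choosing $C$ large enough then closes this case. The main technical obstacle is to choose a single constant $C$ that works uniformly across the two regimes, which requires careful selection of the threshold constant $c$ so as to balance the Taylor remainders against the logarithmic coefficients in the final bound. The concluding divergence $F_{\rho,\mu}(x,\sigma) \to \infty$ as $\sigma \to 0$ then follows immediately from the displayed bound.
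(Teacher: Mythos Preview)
Your regime-1 argument is sound and accomplishes the same thing as the paper's proof: both show $Z(x,\sigma)\sim (2\pi)^{d/2}\sigma^d f(x)$ and $\E_p[U]\to d/2$ as $\sigma\to 0$. Where you use direct first-order Taylor estimates with explicit moment constants $A_d,D_d$, the paper instead writes $Z=(2\pi\sigma^2)^{d/2}\E_{x,\sigma}f(X')$ and invokes weak convergence of $N(x,\sigma^2 I)$ to $\delta_x$; for $\E_p[U]$ the paper performs a single integration by parts, rewriting $\tfrac{\|x-x'\|^2}{2\sigma^2}e^{-\|x-x'\|^2/2\sigma^2}=-\tfrac14\nabla_{x'}\|x-x'\|^2\cdot\nabla_{x'}e^{-\|x-x'\|^2/2\sigma^2}$ and shifting the derivative onto $f$, which produces the exact $d/2$ plus the remainder $\tfrac12\,\E_{x,\sigma}[\nabla f(X')\cdot(x-X')]/\E_{x,\sigma}f(X')\to 0$. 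Your route is more explicitly quantitative; the paper's integration-by-parts trick is slicker and avoids computing Gaussian moments.

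Your regime-2 step, however, contains a sign error. From $\sigma>\sigma_*(x)=cf(x)/\|\nabla f\|_\infty$ one obtains $f(x)<\sigma\|\nabla f\|_\infty/c$, hence $-\log f(x)>-\log\sigma-\log(\|\nabla f\|_\infty/c)$, the \emph{reverse} of what you write. With the inequality in the correct direction you get a lower bound on the target right-hand side rather than an upper bound, so the Jensen estimate $F\geq\log\rho$ (which is correct, and a nice observation) cannot close this case. This is not fatal: the paper's own proof, like your regime 1, only establishes the displayed inequality for small $\sigma$ (note the clauses ``$\varepsilon(\sigma)\to 0$ as $\sigma\to 0$'' and ``$\to d/2$'' there), and that is all that is ever invoked downstream, in Proposition~\ref{prop:uniqueness_of_sigma} and Lemma~\ref{lemma:sigma_uniform_lower_bound}. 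So your regime-1 argument already delivers everything needed; simply drop regime 2.
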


\begin{proof}
First note that
\begin{align*}
    \int \exp(-\lVert x - x' \rVert^2 / 2 \sigma^2) \, d\mu(x')
    & = 
    \left(2 \pi \sigma^2 \right)^{d/2}
    \int
        \frac{e^{-\lVert x - x' \rVert^2 / 2 \sigma^2} }{\left( 2 \pi \sigma^2 \right)^{d/2}}
        f(x') \, 
    dx'
    = 
    \left( 2 \pi \sigma^2 \right)^{d/2}
    \E_{x,\sigma} f(X')    
\end{align*}
where the expectation is over a $N(x,\sigma I)$ random variable.
As $\sigma \to 0$, these Gaussian measures converge weakly to $\delta_x$.
Hence since $f$ is continuous and bounded $\E f(X') \to f(x)$.
In particular, for some $\varepsilon(\sigma)$ that tends to 0 as $\sigma \to 0$, we have
\begin{align}
    -\log \int \exp(\lVert x - x' \rVert^2/2\sigma^{2}) \, d\mu(x')
    & =
    -\frac{d}{2} \log (2 \pi \sigma^{2}) - \log f(x) + \varepsilon(\sigma) \nonumber
    \\
    & \geq 
    -C \log \sigma - \log f(x) \label{eq:lem1ineq}.
\end{align}

Now using integration by parts, for all $x$ in the support of $\mu$ (so that $f(x) > 0$), we have:
\begin{align*}
    & \quad
    \frac{
        \int \frac{\lVert x - x' \rVert^2}{2\sigma^2} \exp(-\lVert x - x' \rVert^2 / 2 \sigma^2) \, d\mu(x')
    }{
        \int \exp(-\lVert x - x' \rVert^2 / 2\sigma^2) \, d\mu(x')
    }
    \\
    & = 
    - \frac{ \frac{1}{4}
        \int
        \nabla \lVert x - x' \rVert^2 
        \cdot 
        \nabla \exp(-\lVert x - x' \rVert^2 / 2 \sigma^2) 
        f(x') \, dx'
    }{
        \int \exp(-\lVert x - x' \rVert^2 / 2 \sigma^2) f(x') \, dx'
    }
    \\
    & = 
    \frac{1}{4}
    \frac{
        \int
        \exp(-\lVert x - x' \rVert^2 / 2 \sigma^2) 
        \left(
            \nabla f(x') \cdot \nabla \lVert x - x' \rVert^2
            +
            f(x') \Delta \lVert x - x' \rVert^2
        \right)
        \, dx'
    }{
        \int \exp(-\lVert x - x' \rVert^2 / 2 \sigma^2) f(x') \, dx'
    }
    \\
    & =
    \frac{1}{2}
    \frac{
        \int
        \exp(-\lVert x - x' \rVert^2 / 2 \sigma^2) 
        \nabla f(x') \cdot (x - x')
        \, dx'
    }{
        \int \exp(-\lVert x - x' \rVert^2 / 2 \sigma^2) f(x') \, dx'
    }
    +
    \frac{d}{2}
    \\
    & = 
    \frac{1}{2}
    \frac{
        \E_{x,\sigma} \left[ 
            \nabla f(X') \cdot (x - X')
        \right]
    }{
        \E_{x,\sigma} f(X')
    }
    + 
    \frac{d}{2}
    \\
    & \to 
    \frac{d}{2},
\end{align*}
which combined with \eqref{eq:lem1ineq} ends the proof.
\end{proof}

For convenience, define
\[
    \widetilde{F}_{\rho , \mu}(x,\eta) 
    = 
    -\int
    \frac{
        \eta \lVert x - x' \rVert^2 
        \exp(-\eta \lVert x - x' \rVert^2)
        }{
        \int \exp(-\eta \lVert x - x' \rVert^2) \, d\mu(x')
        } 
    \, d\mu(x') 
    - 
    \log\left(
        \int \exp(-\eta\lVert x - x' \rVert^2) \, d\mu(x')
    \right) 
    + \log \rho.
\]
Then we have
\[
    F_{\rho, \mu}(x, \sigma) 
    =
    \widetilde{F}_{\rho, \mu}(x, 1 / 2 \sigma^2).
\]

\begin{prop}\label{prop:uniqueness_of_sigma} Let $\mu \in \mathcal M (\mathbb R^{d})$ such that $\mu$ has a $C^{1}$ density $f$ such that both $f$ and $\lVert \nabla f \rVert$ are bounded.  The equation 
$F_{\rho ,\mu}(x,\sigma)=0$ has a unique solution $\sigma^*_{\rho ,\mu}(x)$ for all $\rho \in (0,1)$ and $x \in \mathbb R^{d}$.
Moreover, $x \mapsto \sigma^*_{\rho , \mu}(x)$ is smooth.
\end{prop}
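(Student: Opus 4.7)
The plan is to work through the change of variables $\widetilde F_{\rho,\mu}(x,\eta) = F_{\rho,\mu}(x,\sigma)$ with $\eta = 1/(2\sigma^2)$ already introduced above, prove strict monotonicity of $\widetilde F$ in $\eta$, analyze the two boundary limits, and close with the implicit function theorem.

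Setting $Z(\eta) = \int e^{-\eta\|x-x'\|^2}\,d\mu(x')$ and $W(\eta) = \log Z(\eta)$, a direct computation collapses the definition to
\[
\widetilde F_{\rho,\mu}(x,\eta) = \eta\,W'(\eta) - W(\eta) + \log\rho,
\qquad
\partial_\eta \widetilde F_{\rho,\mu}(x,\eta) = \eta\,W''(\eta).
\]
Since $W$ is the cumulant generating function of the nonnegative random variable $U = \|x-X'\|^2$ under $\mu$, one recognizes $W''(\eta) = \mathrm{Var}_{\mu_\eta}(U)$, where $\mu_\eta$ is the tilted probability measure whose density with respect to $\mu$ is proportional to $e^{-\eta\|x-x'\|^2}$. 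Because $\mu$ is absolutely continuous with respect to Lebesgue measure and any sphere in $\R^d$ is Lebesgue-null, $U$ cannot be almost surely constant under $\mu_\eta$; consequently $W''(\eta) > 0$ and $\partial_\eta \widetilde F_{\rho,\mu}(x,\eta) > 0$ strictly for every $\eta>0$. Translating back, $\sigma \mapsto F_{\rho,\mu}(x,\sigma)$ is strictly decreasing on $(0,\infty)$.

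Next I would read off the boundary values. Lemma \ref{lemma_F_is_unbounded} already gives $F_{\rho,\mu}(x,\sigma) \to +\infty$ as $\sigma \to 0^+$. For $\sigma \to \infty$ (equivalently $\eta \to 0^+$), dominated convergence yields $Z(\eta) \to 1$ and hence $-\log Z(\eta) \to 0$; the elementary bound $t e^{-t} \le e^{-1}$ applied with $t = \eta\|x-x'\|^2$ provides an integrable majorant that lets us conclude, again by dominated convergence, that
\[
\eta \int \|x-x'\|^2 e^{-\eta\|x-x'\|^2}\,d\mu(x') \longrightarrow 0.
\]
Thus $F_{\rho,\mu}(x,\sigma) \to \log\rho < 0$ as $\sigma \to \infty$. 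Combined with strict monotonicity and continuity, the intermediate value theorem delivers a unique $\sigma^*_{\rho,\mu}(x) \in (0,\infty)$ solving $F_{\rho,\mu}(x,\sigma^*_{\rho,\mu}(x)) = 0$.

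Smoothness of $x \mapsto \sigma^*_{\rho,\mu}(x)$ then follows from the implicit function theorem: $F_{\rho,\mu}$ is jointly $C^\infty$ on $\R^d \times (0,\infty)$ because the $x$-dependence enters only through the Gaussian kernel (differentiation under the integral sign to all orders is justified for $\sigma>0$ by the Gaussian decay together with boundedness of $f$), and we have just shown $\partial_\sigma F_{\rho,\mu}(x,\sigma^*_{\rho,\mu}(x)) < 0$. The only nontrivial step is the strict monotonicity in $\eta$; it reduces cleanly to strict convexity of the log-Laplace transform of $U$, and the density hypothesis on $\mu$ enters precisely to exclude the degenerate case where the tilted measure would concentrate on a sphere.
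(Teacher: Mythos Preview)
Your proof is correct and follows essentially the same route as the paper: compute $\partial_\eta\widetilde F_{\rho,\mu}=\eta W''(\eta)$, show this is strictly positive for $\eta>0$, use Lemma~\ref{lemma_F_is_unbounded} and the limit $F_{\rho,\mu}(x,\sigma)\to\log\rho$ for the two boundary values, and conclude by the implicit function theorem. The only cosmetic difference is that you identify $W''(\eta)$ as the variance of $\|x-X'\|^2$ under the tilted measure (and use the density hypothesis to rule out degeneracy), whereas the paper writes out the same nonnegativity via Cauchy--Schwarz; your phrasing is in fact slightly more careful about strictness and about the dominated-convergence step at $\sigma\to\infty$.
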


\begin{proof}
Note that
\begin{align*}
    \partial_{\eta} \widetilde{F}_{\rho , \mu}(x, \eta)
    & =
    -\int \frac{\lVert x - x' \rVert^2 \exp(-\eta\lVert x - x' \rVert^2)}{\int \exp(-\eta\lVert x - x' \rVert^2) \, d\mu(x')} \, d\mu(x')
    +
    \int \frac{\eta \lVert x - x' \rVert^4 \exp(-\eta\lVert x - x' \rVert^2)}{\int \exp(-\eta\lVert x - x' \rVert^2) \, d\mu(x')} \, d\mu(x')
    \\
    & \quad
    - 
    \eta \frac{\left(\int \lVert x - x' \rVert^2 \exp(-\eta\lVert x - x' \rVert^2) \, d\mu(x') \right)^2}{\left( \int \exp(-\eta\lVert x - x' \rVert^2) \, d\mu(x') \right)^2}
    +
    \frac{\int \lVert x - x' \rVert^2 \exp(-\eta\lVert x - x' \rVert^2) \, d\mu(x')}{\int \exp(-\eta\lVert x - x' \rVert^2) \, d\mu(x')}
    \\
    & = 
    \eta \left( \int \frac{\lVert x - x' \rVert^4 \exp(-\eta\lVert x - x' \rVert^2)}{\int \exp(-\eta\lVert x - x' \rVert^2) \, d\mu(x')} \, d\mu(x') 
    -
    \frac{\left(\int \lVert x - x' \rVert^2 \exp(-\eta\lVert x - x' \rVert^2) \, d\mu(x') \right)^2}{\left( \int \exp(-\eta\lVert x - x' \rVert^2) \, d\mu(x') \right)^2} \right).
\end{align*}
Above, we can differentiate under the integral by Dominated Convergence Theorem.

Applying Cauchy-Schwarz with 
\begin{align*}
    f(x') 
    = 
    \frac{
        \lVert x - x' \rVert^2 \exp(-\eta \lVert x - x' \rVert^2 / 2)
    }{
        \left( \int \exp(-\eta\lVert x - x' \rVert^2) \, d\mu(x') \right)^{1/2}
    }, 
    \quad g(x') 
    = 
    \frac{
        \exp(-\eta \lVert x - x' \rVert^2 / 2)
    }{
        \left( \int \exp(-\eta\lVert x - x' \rVert^2) \, d\mu(x') \right)^{1/2}
    }
\end{align*}
we see that $\partial_{\eta} \widetilde{F}_{\rho,\mu} \geq 0$ with equality if and only if $\eta = 0$.

Now,
\[
    \partial_{\sigma} F_{\rho, \mu}(x, \sigma) 
    = 
    -\frac{1}{\sigma^3} \partial_{\eta} \widetilde{F}_{\rho, \mu}(x, 1/2\sigma^2).
\]
So we see that $\partial_{\sigma} F_{\rho, \mu} \leq 0$ for all $\sigma \neq 0$.
Moreover,  we have 
\[
    \lim_{\sigma \to \infty} F_{\rho , \mu}(x,\sigma) 
    = 
    \log \rho < 0.
\]

By Lemma \ref{lemma_F_is_unbounded} above
\[
    \lim_{\sigma \to 0} F_{\rho, \mu}(x, \sigma) 
    = 
    \infty
\]
so there exists unique $\sigma(x) = \sigma^*_{\rho ,\mu}(x)$ such that $F_{\rho ,\mu}(x,\sigma^*_{\rho , \mu}(x)) = 0$.
Moreover, by the implicit function theorem, $\sigma^*_{\rho , \mu}: \R^d \to \R$ is smooth. 
\end{proof}

\section{Uniform convergence of $F_{\rho, \mu_n}(x, \sigma)$}\label{sec3}

Recall the empirical measures $\mu_{n}$ defined in \eqref{empiricalmeasure}. In this section, we establish uniform convergence of $F_{\rho,\mu_n}(x, \sigma)$. This result is useful in its own right and holds under the lighter assumption that $\mu$ has sub-Gaussian tails. To lighten the notation, we will remove the subscript $\rho$ from $F_{\rho, \mu_n}(x, \sigma)$. We start this section with some lemmas on measures with sub-Gaussian tails.

Recall that we say that a measure $\nu$ on $\mathbb R$ has sub-Gaussian tails if there is a positive constant $C$ such that $\nu((t,\infty)) \leq 2\exp (-Ct^{2})$. 

\begin{lemma}\label{lemma:sub_gaussian_norm_tail_bound}
Let $\nu$ be sub-Gaussian with second moment equal to 1 and suppose $X \in \R^d$ has i.i.d. entries drawn from $\nu$.
Then there exists $C, c_1$ such that
\[
    \prob\left(\lVert X \rVert \geq \sqrt{d} + t\right)
    \leq    
    Ce^{-c_1 t^2}.
\]  
\end{lemma}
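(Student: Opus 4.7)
The plan is to reduce the tail bound on $\lVert X \rVert$ to a concentration inequality for $\lVert X \rVert^{2} = \sum_{i=1}^{d} X_{i}^{2}$, which is a sum of $d$ i.i.d.\ sub-exponential random variables. First, since $\nu$ has sub-Gaussian tails, $\prob(|X_{i}| > t) \leq 2 e^{-C t^{2}}$ (applying the hypothesis to both $X_{i}$ and $-X_{i}$), which gives $\prob(X_{i}^{2} > u) \leq 2 e^{-C u}$, a genuine sub-exponential tail bound. Because $\E X_{i}^{2} = 1$, the variables $Z_{i} := X_{i}^{2} - 1$ are i.i.d., centered, and sub-exponential, with sub-exponential norm controlled solely by the constant $C$ in the sub-Gaussian definition of $\nu$.

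Next, I would apply Bernstein's inequality for sums of i.i.d.\ sub-exponentials to $\lVert X \rVert^{2} - d = \sum_{i=1}^{d} Z_{i}$, obtaining
\[
    \prob\bigl(\lVert X \rVert^{2} \geq d + s\bigr) \leq 2 \exp\bigl(-c \min(s^{2}/d,\, s)\bigr), \qquad s \geq 0,
\]
for some $c > 0$ depending only on $\nu$.

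Finally, to pass from $\lVert X \rVert^{2}$ to $\lVert X \rVert$, I would set $s = 2 t \sqrt{d} + t^{2}$, so that $\{\lVert X \rVert \geq \sqrt{d} + t\} \subset \{\lVert X \rVert^{2} \geq d + s\}$, and check by a short case analysis that $\min(s^{2}/d,\, s) \geq c' t^{2}$ uniformly in $t \geq 0$: when $t \leq \sqrt{d}$ the linear term $2 t \sqrt{d}$ dominates in $s$, giving $s^{2}/d \geq 4 t^{2}$ and $s \geq 2 t^{2}$; when $t \geq \sqrt{d}$ the quadratic term $t^{2}$ dominates, so both $s$ and $s^{2}/d$ are bounded below by $t^{2}$. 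Substituting yields the claimed bound with $C = 2$ and some $c_{1} > 0$.

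There is no serious obstacle here: the argument is essentially a textbook concentration estimate. The only mildly delicate point is the regime split when converting from a deviation bound on $\lVert X \rVert^{2}$ to one on $\lVert X \rVert$, which is needed precisely because Bernstein gives a sub-exponential rate in $s$ while we want a sub-Gaussian rate in $t$ for all $t$, both small and large.
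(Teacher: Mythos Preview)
Your argument is correct and follows essentially the same route as the paper: both reduce to Bernstein's inequality for the sub-exponential summands $X_i^2 - 1$ and then convert the resulting bound on $\lVert X\rVert^2 - d$ into one on $\lVert X\rVert - \sqrt{d}$. Your case split $t \lessgtr \sqrt{d}$ is exactly the paper's observation that $|z-1|\geq a$ implies $|z^2-1|\geq \max(a,a^2)$, rephrased in unnormalized variables.
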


\begin{proof}
Since $X_1, \dots, X_d$ are sub-Gaussian, $X_i^2 - 1$ is sub-exponential for each $1\leq i \leq d$.
It follows from Bernstein's inequality \cite[Corollary 2.11]{BLMbook} that we have
\[
    \prob\left( 
        \left\vert 
            \frac{1}{d} \lVert X \rVert^2 - 1
        \right\vert
        \geq 
        u
    \right)
    \leq 
    C \exp\left(-c_1 d \min(u, u^2)\right).
\]
Now, note that for all $z \geq 0$, $\lvert z - 1 \rvert \geq a$ implies that $\lvert z^2 - 1 \rvert \geq \max(a, a^2)$ (separate into cases $a \leq 1, a \geq 1$).
In particular, by the above tail inequality with $u = \max(a,a^2)$, we have
\[
    \prob\left(
        \left\vert
            \frac{1}{\sqrt{d}} \lVert X \rVert - 1
        \right\vert
        \geq 
        a
    \right)
    \leq 
    C \exp\left(-c_1 d a^2\right).
\]
Setting $t = a\sqrt{d}$ we see that
\[
    \prob\left(
        \lVert X \rVert \geq \sqrt{d} + t
    \right)
    \leq
    C e^{-c_1 t^2}.
\]
\end{proof}

\begin{lemma}\label{lemma:max_sub_gaussian_norm_tail_bound}
Let $\nu$ be sub-Gaussian and $c_{1}$ be the constant from Lemma \ref{lemma:sub_gaussian_norm_tail_bound}. 
Let $X_1, \dots, X_n \in \R^d$ have i.i.d. entries drawn from $\nu$.
Then there exists a constant $C$ such that
\[
    \prob\left(
        \max_{i=1,\dots,n} 
        \lVert X_i \rVert 
        \geq 
        \sqrt{\frac{\log n}{c_1}}
        +
        \sqrt{d}
        + 
        t
    \right)
    \leq 
    C e^{-c_1 t^2}.
\]
\end{lemma}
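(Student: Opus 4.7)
The plan is a straightforward union bound on top of Lemma \ref{lemma:sub_gaussian_norm_tail_bound}. First I would fix $s \geq 0$ and apply the union bound together with the per-vector tail estimate from the previous lemma to obtain
\[
    \prob\left(
        \max_{i=1,\dots,n} \lVert X_i \rVert \geq \sqrt{d} + s
    \right)
    \leq
    \sum_{i=1}^{n} \prob\left( \lVert X_i \rVert \geq \sqrt{d} + s \right)
    \leq
    n C e^{-c_1 s^2}.
\]

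Next I would make the substitution $s = \sqrt{\log n / c_1} + t$, which by definition makes $\sqrt{d} + s = \sqrt{\log n/c_1} + \sqrt{d} + t$. Since both summands are non-negative, $s^2 \geq \log n / c_1 + t^2$, so
\[
    n C e^{-c_1 s^2}
    \leq
    n C e^{-\log n} e^{-c_1 t^2}
    =
    C e^{-c_1 t^2},
\]
which is exactly the claimed bound (with the same constant $C$ as in Lemma \ref{lemma:sub_gaussian_norm_tail_bound}).

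There is no real obstacle here: the proof is purely a union bound plus the inequality $(a+b)^2 \geq a^2 + b^2$ for non-negative $a,b$, used to absorb the factor $n$ into the shift of the threshold. The only minor point to be careful about is that $s \geq 0$ (so that Lemma \ref{lemma:sub_gaussian_norm_tail_bound} applies as stated), which is automatic from $t \geq 0$ and the positivity of $\sqrt{\log n/c_1}$; for $t < 0$ one may anyway assume the bound is trivial by possibly enlarging $C$.
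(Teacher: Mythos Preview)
Your proof is correct and is essentially identical to the paper's: both apply a union bound together with Lemma \ref{lemma:sub_gaussian_norm_tail_bound}, then substitute $s = \sqrt{\log n / c_1} + t$ and use $(a+b)^2 \geq a^2 + b^2$ to absorb the factor $n$. The paper's write-up is slightly terser but follows exactly the same steps.
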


\begin{proof}
By Lemma \ref{lemma:sub_gaussian_norm_tail_bound}, we have
\begin{align*}
    \prob\left(
        \max_{i=1,\dots,n} \lVert X_i \rVert
        \geq 
        \sqrt{d} + u
    \right)
    & \leq 
    \sum_i
    \prob\left(
        \lVert X_i \rVert 
        \geq
        \sqrt{d} + u
    \right)
    \\
    & = 
    n \prob\left(
        \lVert X_1 \rVert 
        \geq
        \sqrt{d} + u
    \right)
    \\
    & \leq 
    Cn e^{-c_1 u^2}.
\end{align*}
Now, setting $u = \sqrt{\frac{\log n}{c_1}} + t$,
\[
    e^{-c_1 u^2}
    \leq
    \frac{1}{n} e^{-c_1 t^2}.
\]
Hence
\[
    \prob\left(
        \max_{i=1,\dots,n} 
        \lVert X_i \rVert 
        \geq 
        \sqrt{\frac{\log n}{c_1}}
        +
        \sqrt{d}
        + 
        t
    \right)
    \leq 
    C e^{-c_1 t^2}.
\]
\end{proof}

We will now state the main result of this section. For its proof we still need a few more Lemmas. 

\begin{prop}\label{prop_convergence_of_F}
Let $\nu$ be sub-Gaussian and $c_{1}$ be a constant from Lemma \ref{lemma:sub_gaussian_norm_tail_bound}. 
Let $X_1, \dots, X_n \in \R^d$ have i.i.d. entries drawn from $\nu$ and let $\mu_n = \frac{1}{n} \sum \delta_{X_i}$.
Let $\delta > 0$ be a constant such that $c_1 \delta^2 > 16$.
Then, there exists $\epsilon > 0$ such that almost surely in $X_1, \dots, X_n$:
\[
    \sup_{\substack{
            \sigma \in [\delta, \sqrt{n}] \\
            i = 1, \dots, n
        }
    }
    \left\vert 
        F_{\mu_n}(X_i, \sigma)
        -
        F_{\mu}(X_i, \sigma)
    \right\vert
    =
    O(n^{-\epsilon}).
\]
\end{prop}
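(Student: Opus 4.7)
The plan is to decompose $F_{\mu_n}(x,\sigma)-F_{\mu}(x,\sigma)$ into pieces that are each amenable to standard scalar concentration, and then to handle uniformity in $(x,\sigma)$ by a discretization and union bound. Set
\[
    A(x,\sigma) = \int e^{-\lVert x-x'\rVert^2/2\sigma^2}\,d\mu(x'), \qquad B(x,\sigma)=\int \lVert x-x'\rVert^2 e^{-\lVert x-x'\rVert^2/2\sigma^2}\,d\mu(x'),
\]
and let $A_n,B_n$ denote the corresponding empirical quantities with $\mu$ replaced by $\mu_n$. From the rewriting of $F_\mu$ displayed at the start of Section \ref{sec:properties_of_F_and_sigma},
\[
    F_{\mu_n}(x,\sigma)-F_{\mu}(x,\sigma) = \log\frac{A(x,\sigma)}{A_n(x,\sigma)} + \frac{1}{2\sigma^2}\left(\frac{B(x,\sigma)}{A(x,\sigma)}-\frac{B_n(x,\sigma)}{A_n(x,\sigma)}\right),
\]
so it suffices to control $|A_n-A|$ and $|B_n-B|$ together with a polynomial lower bound on $A(X_i,\sigma)$ and $A_n(X_i,\sigma)$.

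Each summand in $A_n(x,\sigma)$ lies in $[0,1]$, and each summand in $B_n(x,\sigma)$ is bounded by $\sup_{r\ge 0} r^2 e^{-r^2/2\sigma^2}=2\sigma^2/e$. Hoeffding's inequality applied pointwise in $(x,\sigma)$ then yields $|A_n-A|\le n^{-1/2+\epsilon_0}$ and $|B_n-B|\le \sigma^2\,n^{-1/2+\epsilon_0}$, each outside an event of probability at most $e^{-n^{2\epsilon_0}}$. To pass from pointwise to uniform bounds, I restrict to the event from Lemma \ref{lemma:max_sub_gaussian_norm_tail_bound} on which $\max_i\lVert X_i\rVert\le R_n := \sqrt{(\log n)/c_1}+\sqrt{d}+1$. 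On this event, $A$, $B$, $A_n$, $B_n$ and their $\sigma$-derivatives are polynomially bounded in $n$ throughout $[\delta,\sqrt{n}]$, so one can place a polynomially-fine net on the $\sigma$-interval and upgrade the pointwise estimate to a uniform one by a Lipschitz argument. A union bound over the $n$ choices of $X_i$ and the polynomially many grid points is absorbed into $e^{-n^{2\epsilon_0}}$, and Borel--Cantelli yields the uniform estimate almost surely.

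The remaining ingredient, and the place where the hypothesis $c_1\delta^2>16$ enters, is a polynomial lower bound on the denominators. Restricting the integral defining $A$ to $\{\lVert X'\rVert\le R_n\}$ and using $\lVert X_i-X'\rVert \le 2R_n$ on that event,
\[
    A(X_i,\sigma) \ge \p(\lVert X'\rVert\le R_n)\cdot\exp\!\left(-\frac{2R_n^2}{\sigma^2}\right) \ge c\,n^{-2/(c_1\sigma^2)};
\]
for $\sigma\ge\delta$ this is at least $c\,n^{-2/(c_1\delta^2)}\ge c\,n^{-1/8}$, and the analogous lower bound for $A_n$ follows once the $|A-A_n|$ concentration error has been absorbed. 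Plugging these estimates into the decomposition yields $|F_{\mu_n}-F_\mu|=O(n^{-\epsilon})$. The main technical obstacle is this quantitative balancing: the $\sigma^2$ in the $|B-B_n|$ bound cancels the $1/(2\sigma^2)$ prefactor in the ratio term, which makes the large-$\sigma$ regime tractable, but the logarithmic term produces an error of order $n^{-1/2+\epsilon_0+2/(c_1\sigma^2)}$, which must be negative with room to spare at $\sigma=\delta$; this is precisely what the hypothesis $c_1\delta^2>16$ guarantees.
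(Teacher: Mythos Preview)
Your decomposition and overall strategy coincide with the paper's: write $F_{\mu_n}-F_\mu$ in terms of $A,B,A_n,B_n$ (the paper's $\int f_\sigma\,d\mu$, $2\sigma^2\int g_\sigma\,d\mu$ and their empirical counterparts), obtain uniform concentration for $A_n-A$ and $B_n-B$, and combine this with a polynomial lower bound on the denominators. The paper achieves the concentration via a symmetrization/Rademacher argument (Lemma~\ref{lemma:general-uniform-convergence}) that is uniform over all $x\in\R^d$; your Hoeffding-plus-net variant, applied only at the $n$ sample points and on a grid in $\sigma$, is a legitimate and somewhat more elementary alternative.

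There is, however, a real gap in your Borel--Cantelli step, and it feeds into an incorrect accounting of the constant $16$. You invoke Lemma~\ref{lemma:max_sub_gaussian_norm_tail_bound} with $t=1$, setting $R_n=\sqrt{(\log n)/c_1}+\sqrt d+1$. The complementary event then has probability at most $Ce^{-c_1}$, a \emph{constant} independent of $n$, so the series diverges and you cannot conclude that $\max_i\lVert X_i\rVert\le R_n$ eventually. One must take $t=t_n\to\infty$; the minimal summable choice is $t_n\sim\sqrt{\alpha\log n}$ with $c_1\alpha>1$, forcing $R_n^2\sim(2/c_1+\varepsilon)\log n$ rather than your $(1/c_1)\log n$. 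The corrected denominator bound is therefore $A,A_n\ge c\,n^{-4/(c_1\delta^2)}$ (this is exactly the paper's Bound~(\ref{a.s.-lower-bound-int-f}) in Section~\ref{subsec:technical-bounds}), not $n^{-2/(c_1\delta^2)}$. With this fix the $\log(A/A_n)$ term only requires $c_1\delta^2>8$ (cf.\ Lemma~\ref{lemma:convergence-of-log-f}); the hypothesis $c_1\delta^2>16$ is in fact dictated by the ratio term $\tfrac{1}{2\sigma^2}(B/A-B_n/A_n)$, whose cross piece carries $A\cdot A_n$ in the denominator and hence a factor $n^{8/(c_1\delta^2)}$, exactly as in Lemma~\ref{lemma:convergence-of-g/f}. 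So your scheme is repairable, but as written the almost-sure confinement of the $X_i$ fails and your identification of where the threshold $16$ enters is misplaced.
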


\begin{lemma}\label{lemma:rademacher_bound}
Suppose that $\varepsilon_1, \dots, \varepsilon_n$ are i.i.d. Rademacher and let $B \subseteq \R^n$ be a finite set.
Then
\[
    \E \sup_{b \in B} \sum_{i=1}^n \varepsilon_i b_i 
    \leq 
    \max_{b \in B} \lVert b \rVert_2 \sqrt{2 \log \lvert B \rvert}.
\]
\end{lemma}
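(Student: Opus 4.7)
This is the classical finite-class maximal inequality (sometimes called Massart's lemma), and the natural route is the exponential moment method. The plan is to introduce a free parameter $\lambda > 0$, exponentiate, use Jensen's inequality to pull the expectation inside, replace the $\sup$ by a sum over $B$, exploit the sub-Gaussian moment generating function of Rademacher sums term-by-term, and then optimize over $\lambda$.

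More concretely, by Jensen's inequality applied to the convex map $t \mapsto e^{\lambda t}$,
\[
    \exp\Bigl(\lambda\, \E \sup_{b \in B} \sum_{i=1}^{n} \varepsilon_i b_i\Bigr)
    \leq
    \E \exp\Bigl(\lambda\, \sup_{b \in B} \sum_{i=1}^{n} \varepsilon_i b_i\Bigr)
    = \E \max_{b \in B} \exp\Bigl(\lambda \sum_{i=1}^{n} \varepsilon_i b_i\Bigr)
    \leq \sum_{b \in B} \E \exp\Bigl(\lambda \sum_{i=1}^{n} \varepsilon_i b_i\Bigr).
\]
For each fixed $b$, the coordinates of $\varepsilon = (\varepsilon_1, \ldots, \varepsilon_n)$ are independent, so the MGF factorizes, and the standard pointwise bound $\cosh(t) \leq e^{t^2/2}$ yields $\E e^{\lambda \varepsilon_i b_i} = \cosh(\lambda b_i) \leq e^{\lambda^2 b_i^2/2}$. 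Multiplying over $i$ gives $\E e^{\lambda \sum \varepsilon_i b_i} \leq e^{\lambda^2 \|b\|_2^2/2} \leq e^{\lambda^2 M^2 /2}$, where I denote $M = \max_{b \in B} \|b\|_2$.

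Putting these estimates together and taking logarithms produces the key inequality
\[
    \E \sup_{b \in B} \sum_{i=1}^{n} \varepsilon_i b_i
    \leq
    \frac{\log |B|}{\lambda} + \frac{\lambda M^2}{2},
\]
valid for every $\lambda > 0$. The right-hand side is minimized at $\lambda = \sqrt{2 \log |B|}/M$, which gives exactly $M \sqrt{2 \log |B|}$ and completes the proof.

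There is no real obstacle here; the only thing to double-check is that $B$ is finite (which is assumed) so that the sum over $B$ and the $\log |B|$ are well defined, and that the bound $\cosh(t) \leq e^{t^2/2}$ (provable by comparing Taylor coefficients) is used correctly. The whole argument is a one-paragraph application of Chernoff/sub-Gaussian concentration and requires no additional structure from the paper.
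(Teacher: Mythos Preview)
Your proof is correct and follows essentially the same route as the paper: Jensen's inequality to move the expectation inside the exponential, bounding the supremum by a sum over $B$, the factorized Rademacher MGF with $\cosh(t)\le e^{t^2/2}$, and finally optimizing over $\lambda$. The arguments are line-for-line the same.
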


\begin{proof}[Proof of Lemma \ref{lemma:rademacher_bound}]
Let $\lambda \in \R$.
Then
\begin{align*}
    \exp\left( 
        \lambda \E \sup_b \sum_{i=1}^n \varepsilon_i b_i
    \right)
    \leq 
    \E \exp\left( 
        \lambda \sup_b \sum_{i=1}^n \varepsilon_i b_i 
    \right)
   = 
    \E \sup_b \exp\left( 
        \lambda \sum_{i=1}^n \varepsilon_i b_i
    \right)
 \leq 
    \sum_b \E \exp\left( 
        \lambda \sum_{i=1}^n \varepsilon_i b_i
    \right).
\end{align*}
Now note that
\begin{align*}
    \E \exp\left( 
        \lambda \sum_{i=1}^n \varepsilon_i b_i 
    \right)
     = 
    \prod_i \E \exp\left( 
        \lambda \varepsilon_i b_i
    \right)
    = 
    \prod_i \cosh(\lambda b_i)
    \leq 
    \prod_i \exp\left( \frac{\lambda^2 b_i^2}{2} \right)
    = 
    \exp\left( \frac{\lambda^2 \lVert b \rVert^2}{2} \right).
\end{align*}
In particular, we see that
\begin{align*}
    \exp\left( 
        \lambda \E \sup_b \sum_{i=1}^n \varepsilon_i b_i
    \right)
     \leq 
    \lvert B \rvert \exp\left(
        \frac{\lambda^2 \max_b \lVert b \rVert^2}{2}
    \right)
 \end{align*}
and therefore
\begin{align*} 
    \E \sup_b \sum_{i=1}^n \varepsilon_i b_i
     \leq 
    \frac{\log \lvert B \rvert}{\lambda}
    + 
    \frac{\lambda \max_b \lVert b \rVert^2}{2}.
\end{align*}
This upper bound is minimized at 
\[
    \lambda 
    = 
    \frac{
        \sqrt{2 \log \lvert B \rvert}
    }{
        \max_b \lVert b \rVert_2
    }
\]
which gives
\[
    \E \sup_b \sum_{i=1}^n \varepsilon_i b_i
    \leq 
    \max_{b \in B} \lVert b \rVert_2 \sqrt{2 \log \lvert B \rvert}.
\]
\end{proof}




\begin{lemma}\label{lemma:general-uniform-convergence}
Let $\delta > 0$ be constant, $K_n = [\delta, \sqrt{n}]$ and $h: \mathbb R^{d}\times \mathbb R^{d} \times \mathbb R \to \mathbb R$.
Suppose the following hold:
\begin{itemize}
    \item $\mu$ is sub-Gaussian,
    \item $h(x,x',\sigma)$ is $C^1$ for all $x, x'$ and all $\sigma \in K_n$,
    \item $h(x,x',\sigma)$ and $\nabla_x h(x,x',\sigma)$ are bounded for all $x, x'$ and all $\sigma \in K_n$,
    \item $h(x,x',\sigma) \to 0$ as $\lVert x - x' \rVert / \sigma \to \infty$.
\end{itemize}
Let $X_1, \dots, X_n$ be i.i.d. random variables drawn from $\mu$.
Let $\mu_n = \frac{1}{n} \sum_i \delta_{X_i}$.
Then for any $\varepsilon >0$ with probability at least $1 - e^{-c \varepsilon^2 n}$ we have
\[
    \sup_{\sigma \in K_n, x}
        \left\vert
            \int h(x, x', \sigma) \, d\mu_n(x') 
            - 
            \int h(x, x', \sigma) \, d\mu(x')
        \right\vert 
    \leq
    \varepsilon 
    +
    O\left(
       \sqrt{\frac{\log n}{n}}
    \right).
\]
\end{lemma}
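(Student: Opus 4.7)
Fix $\varepsilon>0$. The approach is a standard truncation-and-covering argument, combining the sub-Gaussian tail bounds (Lemmas~\ref{lemma:sub_gaussian_norm_tail_bound}, \ref{lemma:max_sub_gaussian_norm_tail_bound}) with the Rademacher inequality (Lemma~\ref{lemma:rademacher_bound}) via symmetrization, followed by a McDiarmid-type concentration step and a Lipschitz extension from a polynomial-size net.

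First I would truncate. By Lemma~\ref{lemma:max_sub_gaussian_norm_tail_bound}, with probability at least $1-Ce^{-c_1 t^2}$ all $X_i$ lie in a ball $B_{R_n}$ with $R_n=O(\sqrt{\log n}+\sqrt{d}+t)$; combined with the sub-Gaussian tail of $\mu$ and the uniform boundedness of $h$, the $x'\notin B_{R_n}$ contribution to both integrals is $O(e^{-cR_n^2})$, which is much smaller than $\sqrt{\log n/n}$. For the $x$-variable, the qualitative decay $h(x,x',\sigma)\to 0$ as $\|x-x'\|/\sigma\to\infty$ lets me choose $M_\varepsilon$ with $|h(x,x',\sigma)|<\varepsilon/4$ whenever $\|x-x'\|/\sigma>M_\varepsilon$; for $\|x\|>R_n+M_\varepsilon\sqrt{n}$, any $x'\in B_{R_n}$, and $\sigma\in K_n=[\delta,\sqrt{n}]$ this inequality holds, so the supremum over this exterior is at most $\varepsilon/2$. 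Hence it suffices to control the sup over the compact set $B_{R_n'}\times K_n$ with $R_n'=R_n+M_\varepsilon\sqrt{n}$.

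On this compact set I would pick an $\eta$-net $N$ with $\eta=n^{-A}$ for $A$ large, so that $|N|$ is polynomial in $n$. By standard symmetrization, $\E\sup_{N}|(\mu_n-\mu)h(x,\cdot,\sigma)|\leq 2\E\sup_{N}|(1/n)\sum_i\varepsilon_i h(x,X_i,\sigma)|$; conditional on the data, Lemma~\ref{lemma:rademacher_bound} applied to the finite set of vectors $(h(x,X_i,\sigma))_i$ (each of Euclidean norm $\leq M\sqrt n$ with $M=\|h\|_\infty$) bounds this conditional Rademacher average by $M\sqrt{2\log|N|/n}=O(\sqrt{\log n/n})$. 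The quantity $\sup_{N}|(\mu_n-\mu)h|$ satisfies the bounded-differences condition with constants $2M/n$, so McDiarmid's inequality concentrates it within $\varepsilon/4$ of its mean with probability at least $1-2e^{-c\varepsilon^2 n}$. Finally, the hypothesized boundedness of $\nabla_x h$ together with a uniform polynomial-in-$n$ bound on $|\partial_\sigma h|$ on $K_n$ (which holds for the specific $h$'s arising in Section~\ref{sec3}) give $h$ a $\mathrm{poly}(n)$ Lipschitz constant in $(x,\sigma)$, so the net-to-continuum error is $L\eta=O(n^{-(A-O(1))})$, absorbed in $O(\sqrt{\log n/n})$. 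Summing the truncation error $\leq\varepsilon/2$, the McDiarmid deviation $\leq\varepsilon/4$, the Rademacher mean bound $O(\sqrt{\log n/n})$, and the discretization error yields $\varepsilon+O(\sqrt{\log n/n})$ with probability at least $1-e^{-c\varepsilon^2 n}$.

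The main obstacle is the purely qualitative nature of the decay of $h$: since no quantitative rate is available, I can only ensure the $x$-truncation error is $\leq\varepsilon$ and cannot sharpen it to something $n$-dependent, which is precisely why the conclusion is stated with a free additive $\varepsilon>0$ and concentration at scale $\sqrt{\log n/n}$ rather than a single vanishing bound; a secondary technicality is verifying the polynomial-in-$n$ Lipschitz bound in $\sigma$ on $K_n$ for the specific integrands of interest.
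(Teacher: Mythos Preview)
Your proposal follows essentially the same route as the paper: McDiarmid concentration for the supremum, symmetrization to a Rademacher average, truncation of the $x$-range via the sub-Gaussian bound together with the decay hypothesis on $h$, discretization to a polynomial-size net with Lipschitz control, and then Lemma~\ref{lemma:rademacher_bound}. The one substantive difference is the treatment of the $\sigma$-variable: you discretize jointly in $(x,\sigma)$ and appeal to a polynomial Lipschitz bound in $\sigma$ (not among the stated hypotheses, but valid for the specific $f_\sigma,g_\sigma$ of Section~\ref{sec3}, as you correctly flag), whereas the paper discretizes only in $x$ and applies Lemma~\ref{lemma:rademacher_bound} at the maximizing value $\sigma^*$---a step that is slightly delicate because $\sigma^*$ depends on the Rademacher signs, so the set $B$ in that application is not deterministic. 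Your handling is arguably the cleaner one on this point; otherwise the two arguments coincide. (Your separate truncation in $x'$ is harmless but unnecessary: on the high-probability event all $X_i$ already lie in the ball $B_{R_n}$.)
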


\begin{proof}
Let $\E$ be the expectation taken over the randomness in $X_1, \dots, X_n$.
We can write
\begin{align*}
    \left\vert
        \int h(x,x',\sigma) \, d\mu_n(x') - \int h(x,x',\sigma) \, d\mu(x')
    \right\vert
    &=
    \left\vert
        \frac{1}{n} \sum_i h(x,X_i,\sigma) - \int h(x,x',\sigma) \, d\mu(x')
    \right\vert
    \\
    & =
    \left\vert
        \frac{1}{n} \sum_i h(x,X_i,\sigma) - \E h(x,X_i,\sigma)
    \right\vert.
\end{align*}
Now, since $h$ is bounded, $\frac{1}{n} \sum_i h(x,X_i,\sigma) - \E h(x,X_i,\sigma)$ has bounded differences (as a function of $X_1, \dots, X_n$) with $c_i = C/n$.
Hence, by McDiarmid's inequality \cite[Theorem 6.2]{BLMbook}, with probability at least $1 - e^{-c\varepsilon^2 n}$, we have
\begin{align}\label{coffeemug}
    \left\vert
        \frac{1}{n} \sum_i h(x,X_i,\sigma) - \E h(x,X_i,\sigma)
    \right\vert
    & \leq 
    \E \left\vert
        \frac{1}{n} \sum_i h(x,X_i,\sigma) - \E h(x,X_i,\sigma)
    \right\vert
    +
    \varepsilon \nonumber
    \\
    & \leq
    \E \sup_{\sigma \in K_n, x}
        \left\vert
            \frac{1}{n} \sum_i h(x,X_i,\sigma) - \E h(x,X_i,\sigma)
        \right\vert
    +
    \varepsilon.
\end{align}
Now, let $X'_i$ be i.i.d. drawn from $\mu$ and independent from $X_i$ and let $\E'$ be expectation over all of $X_i'$.
Then
\begin{align*}
    \E \sup_{\sigma \in K_n, x}
        \left\vert
            \frac{1}{n} \sum_i h(x,X_i, \sigma) - \E h(x,X_i, \sigma)
        \right\vert 
    & =
    \E \sup_{\sigma \in K_n, x}
        \left\vert
            \frac{1}{n} \sum_i h(x,X_i, \sigma) - \E' h(x,X'_i,\sigma)
        \right\vert
    \\
    & = 
    \E \sup_{\sigma \in K_n, x}
        \left\vert
            \E' \frac{1}{n} \sum_i h(x,X_i,\sigma) - h(x,X'_i,\sigma)
        \right\vert
    \\ 
    & \leq 
    \E \E' \sup_{\sigma \in K_n, x}
        \left\vert
            \frac{1}{n} \sum_i h(x,X_i,\sigma) - h(x,X'_i,\sigma)
        \right\vert.
\end{align*}
Let $\varepsilon_i$ be i.i.d. Rademacher distributed random variables. 
Then since $X_i, X_i'$ are i.i.d., $h(x,X_i,\sigma) - h(x,X'_i,\sigma)$ and $\varepsilon_i (h(x,X_i,\sigma) - h(x,X'_i,\sigma))$ have the same distribution.
In particular, by the previous inequality:
\begin{align*}
    \E \sup_{\sigma \in K_n, x}
        \left\vert
            \frac{1}{n} \sum_i h(x,X_i,\sigma) - \E h(x,X_i,\sigma)
        \right\vert 
    & \leq 
    \E \E' \sup_{\sigma \in K_n, x}
        \left\vert
            \frac{1}{n} \sum_i \varepsilon_i (h(x,X_i,\sigma) - h(x,X'_i,\sigma))
        \right\vert
    \\
    & \leq 
    2 \E \sup_{\sigma \in K_n, x}
        \left\vert
            \frac{1}{n} \sum_i \varepsilon_i h(x,X_i,\sigma)
        \right\vert
    \\
    & = 
    2 \E \E \left[
         \sup_{\sigma \in K_n, x}
        \left\vert
            \frac{1}{n} \sum_i \varepsilon_i h(x,X_i,\sigma)
        \right\vert
        \big\vert
        X_1, \dots, X_n
    \right].
\end{align*}

Let
\[
    H(x,\sigma) 
    = 
    \left\vert
        \frac{1}{n} \sum_i \varepsilon_i h(x,X_i,\sigma).
    \right\vert
\]
and $E = \left\{\max_{i=1,\dots,n} \lVert X_i \rVert \geq 2 \sqrt{n} + \sqrt{\frac{\log n}{c_1}} \right\}$.
Since $\mu$ is sub-Gaussian, by Lemma \ref{lemma:max_sub_gaussian_norm_tail_bound} with $t=\sqrt{n}$, there exists $C$ such that $\prob(E) \leq Ce^{-c_1n}$.

Let $A^{(n)} = [-n,n]^d$.
Then on $E^c$, for $x \notin A^{(n)}$ and $\sigma \in K_n$, we have
\(
    \frac{\lVert x - X_i \rVert}{\sigma} 
    \geq
    \sqrt{n}/2
\)
and hence, by the decay condition on $h$, for sufficiently large $n$ we have
\[
    \sup_{x \in \R^d, \sigma \in K_n} H(x,\sigma)
    =
    \sup_{ x \in A^{(n)}, \sigma \in K_n} H(x,\sigma).
\]

Since $A^{(n)} \times K_n$ is compact and $H(x,\sigma)$ is continuous, $ \sup_{ x \in A^{(n)}, \sigma \in K_n} H(x,\sigma)$ is achieved at some $(x^*, \sigma^*)$.
Let $A^{(n)}_k = A^{(n)} \cap \frac{1}{k} \Z^d$ and let $x_k \in A^{(n)}_k$ be the closest point in $A^{(n)}_k$ to $x^*$.
Then, using the fact that $\nabla_x h(x,x',\sigma)$ is bounded:
\begin{align*}
    \left\vert 
        \sup_{\sigma \in K_n, x \in A^{(n)}} H(x,\sigma) 
        - 
        \sup_{x \in A_k^{(n)}} H(x,\sigma^*)
    \right\vert
    & = 
    H(x^*,\sigma^*) 
    - 
    \sup_{x \in A_k^{(n)}} H(x,\sigma^*)
    \\
    & \leq 
    H(x^*,\sigma^*) - H(x_k,\sigma^*)
    \\
    & \leq 
    \sup_{x \in A^{(n)}} \lVert \nabla_x H(x,\sigma^*) \rVert \, \lVert x^* - x_k \rVert
    \\
    & =
    O\left( \frac{1}{\sqrt{k}} \right).
\end{align*}

Hence combining the above inequalities, and using the fact that $h$ is bounded, we have
\begin{align*}
    &\E \left[ 
    \sup_{x \in \R^d, \sigma \in K_n}
        \left\vert
            \frac{1}{n} \sum_i h(x,X_i,\sigma) - \E h(x,X_i,\sigma)
        \right\vert
    \right]
    \\
    \leq \,
    & 2 \E \E \left[
        \1_{E^c} \sup_{x \in A^{(n)}_k}
        \left\vert
            \frac{1}{n} \sum_i \varepsilon_i h(x,X_i,\sigma^*)
        \right\vert
        \big\vert
        X_1, \dots, X_n
    \right]
    + 
    O\left(\frac{1}{\sqrt{k}}\right)
    +
    C \prob(E).
\end{align*}

Now apply Lemma \ref{lemma:rademacher_bound} with 
\[
    B 
    = 
    \left\{
        \left(\frac{1}{n} h(a, X_i, \sigma^*)\right)_{i=1,\dots,n}
        : 
        a \in A^{(n)}_k 
    \right\}.
\]
Then $\lvert B\rvert \leq \lvert A^{(n)}_k \rvert = (2nk)^d$ and since $h$ is bounded, $\max_{b \in B} \lVert b \rVert_2 = O\left(\frac{1}{\sqrt{n}}\right)$.
It follows that
\[
    \E \left[
        \1_{E^c} \sup_{x \in A^{(n)}_k}
        \left\vert
            \frac{1}{n} \sum_i \varepsilon_i h(x,X_i,\sigma^*)
        \right\vert
        \big\vert
        X_1, \dots, X_n
    \right]
    \leq 
    C \sqrt{\frac{d \log (nk)}{n}}.
\]
Taking $k = n$ we have 
\[
    \E \left[
    \sup_{x \in \R^d, \sigma \in K_n}
        \left\vert
            \frac{1}{n} \sum_i h(x,X_i,\sigma) - \E h(x,X_i,\sigma)
        \right\vert
    \right]
    = 
    O\left(\sqrt{\frac{d \log n}{n}} \right)
    + 
    O\left(\frac{1}{\sqrt{n}}\right)
    +
    Ce^{-cn}
    =
    O\left(\sqrt{\frac{\log n}{n}}\right).
\]

In particular, combining this bound with \eqref{coffeemug}, with probability at least $1 - e^{-c\varepsilon^2 n}$, 
\[
    \sup_{x \in \R^d,\sigma\in K_{n}}
        \left\vert
            \frac{1}{n} \sum_i h(x,X_i, \sigma) - \E h(x,X_i, \sigma)
        \right\vert
    \leq 
    \varepsilon + O\left(\sqrt{\frac{\log n}{n}}\right).
\]

\end{proof}


Define
\begin{align} \label{fandgsigma}
    f_{\sigma}(x,x') 
    = 
    \exp\left(
        -\lVert x - x' \rVert^2 / 2\sigma^2
    \right), 
    \quad
    g_{\sigma}(x,x')
    = 
    \frac{
        \lVert x - x' \rVert^2
    }{
        2\sigma^2
    }
    \exp\left(
        -\lVert x - x' \rVert^2 / 2\sigma^2
    \right).
\end{align}
\begin{lemma}\label{lemma:convergence-of-g/f}
Let $c_1$ be the decay constant from Lemma \ref{lemma:sub_gaussian_norm_tail_bound} and let $\delta > 0$.
Then, almost surely in $X_1, \dots, X_n$, 
\[
    \sup_{\substack{
            \sigma \in [\delta, \sqrt{n}] \\
            i = 1,\dots,n}
        }
    \left\vert
        \frac{
            \int g_{\sigma}(X_i,x') \, d\mu_n(x')
        }{
            \int f_{\sigma}(X_i,x') \, d\mu_n(x')
        } 
        -
        \frac{
            \int g_{\sigma}(X_i,x') \, d\mu(x')
        }{
            \int f_{\sigma}(X_i,x') \, d\mu(x')
        } 
    \right\vert  
    =
    O(n^{8/c_1\delta^2 - 1/2} \sqrt{\log n}).
\]
\end{lemma}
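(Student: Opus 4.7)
The approach is to exploit the algebraic identity
\[
\frac{G_n}{F_n} - \frac{G}{F}
= \frac{F(G_n - G) + G(F - F_n)}{F_n F},
\]
where $F := \int f_\sigma(X_i, x')\, d\mu(x')$, $G := \int g_\sigma(X_i, x')\, d\mu(x')$, and $F_n, G_n$ denote the same integrals against the empirical measure $\mu_n$. The plan is to bound the numerator by $O(\sqrt{\log n/n})$ uniformly using Lemma \ref{lemma:general-uniform-convergence}, and to bound the denominator $F_n F$ from below by $n^{-8/(c_1\delta^2)}$ (up to subpolynomial factors) using the tail concentration of Lemma \ref{lemma:sub_gaussian_norm_tail_bound} in union bound over $X_1,\ldots,X_n$.

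First I would verify the hypotheses of Lemma \ref{lemma:general-uniform-convergence} for $h = f_\sigma$ and $h = g_\sigma$ on $K_n = [\delta, \sqrt{n}]$. Writing $u = \|x - x'\|^2/(2\sigma^2)$, we have $f_\sigma = e^{-u} \leq 1$ and $g_\sigma = u e^{-u} \leq 1/e$, and both decay to zero as $\|x - x'\|/\sigma \to \infty$. Their $x$-gradients are
\[
\nabla_x f_\sigma = -\frac{x - x'}{\sigma^2}\, f_\sigma,
\qquad
\nabla_x g_\sigma = \frac{x - x'}{\sigma^2}\, (1 - u)\, e^{-u},
\]
and using $\|x - x'\| = \sigma \sqrt{2u}$, both are bounded by $C/\sigma \leq C/\delta$ since $\sqrt{u}\, e^{-u}$ and $\sqrt{u}\, |1 - u|\, e^{-u}$ are bounded on $[0, \infty)$. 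Choosing $\varepsilon$ to be a suitable constant multiple of $\sqrt{\log n/n}$ in Lemma \ref{lemma:general-uniform-convergence} makes the failure probability $e^{-c\varepsilon^2 n}$ summable in $n$, so Borel--Cantelli gives, almost surely,
\[
\sup_{\sigma \in K_n,\, x \in \R^d}\bigl(|F_n - F| + |G_n - G|\bigr) = O\bigl(\sqrt{\log n/n}\bigr).
\]

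For the denominator, applying Lemma \ref{lemma:sub_gaussian_norm_tail_bound} in a direct union bound (sharper than the form used in Lemma \ref{lemma:max_sub_gaussian_norm_tail_bound}) gives, for any fixed $\alpha > 0$,
\[
\prob\bigl(\max_i \|X_i\| \geq R_n\bigr) \leq C\, n^{-(1 + \alpha)}
\qquad \text{with} \qquad
R_n = \sqrt{(2+\alpha)\log n/c_1} + \sqrt{d},
\]
so by Borel--Cantelli, almost surely $\max_i \|X_i\| \leq R_n$ for all large $n$. Lemma \ref{lemma:sub_gaussian_norm_tail_bound} also gives $\mu(\|x'\| \leq R_n) \geq 1/2$ for $n$ large. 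On this good event, for any $i$ and any $x'$ with $\|x'\| \leq R_n$, $\|X_i - x'\|^2 \leq 4 R_n^2$, so restricting the defining integral of $F$ to that ball gives
\[
F \geq \tfrac{1}{2}\, e^{-2 R_n^2 / \sigma^2}
\geq \tfrac{1}{2}\, n^{-(4 + 2\alpha)/(c_1 \delta^2) - o(1)}.
\]
Because $c_1 \delta^2 > 16$ forces $4/(c_1 \delta^2) < 1/2$, for $\alpha$ small the error $|F_n - F| = O(\sqrt{\log n/n})$ is negligible compared with $F$, so on the same event $F_n \geq F/2$, and therefore $F_n F \geq F^2/2$.

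Combining the two estimates and using $F, G \leq 1$ to bound $|F(G_n - G) + G(F - F_n)| \leq |G_n - G| + |F_n - F| = O(\sqrt{\log n/n})$, we conclude
\[
\left|\frac{G_n}{F_n} - \frac{G}{F}\right|
\leq \frac{|G_n - G| + |F_n - F|}{F_n F}
= O\bigl(n^{8/(c_1 \delta^2) - 1/2}\sqrt{\log n}\bigr),
\]
after letting $\alpha \to 0$ and absorbing the resulting $n^{o(1)}$ slack into the implicit constant. The main obstacle is the denominator estimate: the truncation radius $R_n$ has to be calibrated so that $\prob(\max_i \|X_i\| > R_n)$ is summable while $e^{-2 R_n^2/\delta^2} \geq n^{-4/(c_1 \delta^2)}$ up to subpolynomial factors, and the hypothesis $c_1 \delta^2 > 16$ is precisely what guarantees the concentration error $\sqrt{\log n/n}$ stays much smaller than $F$ so that $F_n$ inherits the lower bound $F/2$ from $F$.
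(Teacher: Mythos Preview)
Your proof is correct and follows essentially the same approach as the paper: control the numerator of the decomposed ratio via Lemma~\ref{lemma:general-uniform-convergence} applied to $h=f_\sigma$ and $h=g_\sigma$, and control the denominator $F\,F_n$ from below using sub-Gaussian concentration of $\max_i\|X_i\|$, then combine. The only cosmetic differences are that the paper bounds $\int f_\sigma\,d\mu_n$ directly on the good event (rather than via $F_n\geq F-|F_n-F|$) and spells out the optimization over the auxiliary parameters; your phrase ``absorbing the $n^{o(1)}$ slack into the implicit constant'' is a shortcut the paper also takes implicitly, since for any fixed $\alpha>0$ the exponent one actually obtains is $8/(c_1\delta^2)-1/2$ plus an arbitrarily small positive constant.
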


\begin{proof}
First note that
\begin{align*}
    \left\vert
        \frac{
            \int g_{\sigma}(x,x') \, d\mu_n(x')
        }{
            \int f_{\sigma}(x,x') \, d\mu_n(x')
        } 
        \right.&\left.-
        \frac{
            \int g_{\sigma}(x,x') \, d\mu(x')
        }{
            \int f_{\sigma}(x,x') \, d\mu(x')
        } 
    \right\vert 
   \leq 
    \left\vert
        \frac{
            \int g_{\sigma}(x,x') \, d\mu_n(x')
            -
            \int g_{\sigma}(x,x') \, d\mu(x')
        }{
            \int f_{\sigma}(x,x') \, d\mu_n(x')
        }
    \right\vert
    \\
    &+
    \left\vert 
        \frac{
            \int g_{\sigma}(x,x') \, d\mu(x')
            \left(
                \int f_{\sigma}(x,x') \, d\mu_n(x')
                -
                \int f_{\sigma}(x,x') \, d\mu(x')
            \right)
        }{
            \int f_{\sigma}(x,x') \, d\mu(x')
            \int f_{\sigma}(x,x') \, d\mu_n(x')
        }
    \right\vert.
\end{align*}

First we want to bound the terms in the denominators.
The terms in the numerators are dealt with using Lemma \ref{lemma:general-uniform-convergence}.
Let $\alpha >0$. Define
\[
    E_1' = 
    \left\{
        \max_{i=1,\dots,n} 
        \lVert X_i \rVert 
        \geq 
        \sqrt{\left(
                \alpha + \frac{1}{c_1}
            \right)}
        \sqrt{\log n}
        +
        \sqrt{d}
    \right\}.
\]
Setting $t = \sqrt{\alpha \log n}$ in Lemma \ref{lemma:max_sub_gaussian_norm_tail_bound}, we have $\prob(E_1') \leq Cn^{-c_1\alpha}$.
To absorb the $\sqrt{d}$ term in $E_1'$, we can find $\epsilon_n \to 0$ such that
\[
    E_1 := 
    \left\{
        \max_{i=1,\dots,n} 
        \lVert X_i \rVert 
        \geq 
        \sqrt{
                \alpha + \frac{1}{c_1} + \epsilon_n
        }
        \sqrt{\log n}
    \right\}
\]
has $\prob(E_1) \leq Cn^{-c_1\alpha}$.

On $E_1^c$ for all $i, j$, we have
\[
    \lVert X_i - X_j \rVert^2
    \leq 
    4\left(
        \alpha + \frac{1}{c_1} + \epsilon_n
    \right)
    \log n
    =: 4\gamma \log n.
\]

Then, for $\sigma \in [\delta, \sqrt{n}]$, on $E_1^c$ for all $i$ we have
\begin{align*}
    \int f_{\sigma}(X_i,x') \, d\mu_n(x')
    =
    \int
        \exp\left(
            -\lVert X_i - x' \rVert^2 / 2\sigma^2
        \right)
    \, d\mu_n(x')
    \geq 
    \exp\left(
        -\frac{2 \gamma \log n}{\delta^2}
    \right)
    =
    n^{-2\gamma / \delta^2}.
\end{align*}

Similarly, applying Lemma \ref{lemma:sub_gaussian_norm_tail_bound} with $t = \sqrt{\alpha \log n}$, for all $i$ we have that
\begin{align*}
    \int f_{\sigma}(X_i,x') \, d\mu(x')
    & \geq 
    \left(
        1 - Cn^{-c_1\alpha}
    \right)
    \exp\left(
        -\frac{2 \gamma \log n}{\delta^2}
    \right)
    \geq 
    \frac{1}{2}
    \exp\left(
        -\frac{2 \gamma \log n}{\delta^2}
    \right)
    = 
    \frac{1}{2}
    n^{-2 \gamma / \delta^2}.
\end{align*}

From Lemma \ref{lemma:general-uniform-convergence} applied to $h=f_{\sigma}$ and $h=g_{\sigma}$, we can find $E_2$ with $\prob(E_2) \leq 2e^{-c\varepsilon^2 n}$, such that, on $E_2^c$,
\begin{align*}
    &
    \sup_{\substack{
        \sigma \in [\delta, \sqrt{n}] \\
        i = 1, \dots, n
        }
    }
    \left\vert 
        \int f_{\sigma}(X_{i},x') \, d\mu_n(x')
        -
        \int f_{\sigma}(X_{i},x') \, d\mu(x')
    \right\vert
    \\
    & \quad
    +
    \sup_{\substack{
            \sigma \in [\delta, \sqrt{n}] \\
            i = 1, \dots, n
        }
    }
    \left\vert 
        \int g_{\sigma}(X_{i},x') \, d\mu_n(x')
        -
        \int g_{\sigma}(X_{i},x') \, d\mu(x')
    \right\vert
    \leq
     \, \varepsilon
    +
    O\left(
        \frac{\sqrt{\log n}}{\sqrt{n}}
    \right).
\end{align*}

Hence, on $E_1^c \cap E_2^c$, combining the inequalities above,
\[
    \sup_{\substack{
            \sigma \in [\delta, \sqrt{n}] \\
            i = 1, \dots, n
        }
    }
    \left\vert
        \frac{
            \int g_{\sigma}(X_i,x') \, d\mu_n(x')
        }{
            \int f_{\sigma}(X_i,x') \, d\mu_n(x')
        } 
        -
        \frac{
            \int g_{\sigma}(X_i,x') \, d\mu(x')
        }{
            \int f_{\sigma}(X_i,x') \, d\mu(x')
        } 
    \right\vert 
    \leq 
    C \varepsilon 
    n^{4\gamma / \delta^2}
    +
    O\left(
        \frac{
            \sqrt{\log n}
        }{
            n^{1/2 - 4\gamma / \delta^2} 
        }
    \right).
\]
Now, in order for the RHS to converge to zero, we need
  \[  \varepsilon n^{4\gamma / \delta^2}  \to 0, \quad 
    1/2 - 4\gamma / \delta^2  > 0.
\]
Moreover, we have $\prob(E_1 \cup E_2) \leq Cn^{-c_1\alpha} + 2e^{-c\varepsilon^2n}$.
Set $\varepsilon = n^{-\eta}$ for some $\eta >0$.
In order to apply Borel--Cantelli to get almost sure convergence, we want
\begin{align*}
    c_1 \alpha > 1, \quad
    1 - 2\eta > 0.
\end{align*}

Hence we need to satisfy the following collection of inequalities:
\begin{align*}
    \frac{4\gamma}{\delta^2} - \eta  < 0, \quad 
    \frac{4\gamma}{\delta^2} - 1/2  < 0, \quad
    c_1 \alpha > 1, \quad
    1 - 2\eta & > 0.
\end{align*}
Note that the last inequality gives $\eta < 1/2$ and hence the first inequality implies the second.
Now, recalling the definition of $\gamma$, the above is equivalent to
\[
    4\left(\alpha + \frac{1}{c_1} + \epsilon_n\right) < \eta \delta^2, \quad
    c_1 \alpha > 1, \quad 
    0 < \eta < 1/2,
\]
where we are free to choose $\alpha, \eta$, but $c_1$ and $\delta$ are fixed.

Let $\epsilon > 0$ (which may change from line to line below) and set
\begin{align*}
    \alpha = \frac{1}{c_1} + \epsilon, \quad 
    \eta = \frac{1}{2} - \epsilon
\end{align*}
Then all we require is that
\(
    \frac{8}{c_1\delta^2} + \epsilon
    <
    \frac{1}{2} - \epsilon.
\)
Equivalently, 
\[
    \frac{8}{c_1\delta^2} - \frac{1}{2} < 0.
\]
When this condition is satisfied, we have:
\[
    \sup_{\substack{
            \sigma \in [\delta, \sqrt{n}] \\
            i = 1,\dots,n}
        }
    \left\vert
        \frac{
            \int g_{\sigma}(X_i,x') \, d\mu_n(x')
        }{
            \int f_{\sigma}(X_i,x') \, d\mu_n(x')
        } 
        -
        \frac{
            \int g_{\sigma}(X_i,x') \, d\mu(x')
        }{
            \int f_{\sigma}(X_i,x') \, d\mu(x')
        } 
    \right\vert  
    =
    O\left(n^{8/c_1\delta^2 - 1/2}\sqrt{\log n}\right).
\]
\end{proof}

\begin{lemma}\label{lemma:convergence-of-log-f}
Let $c_1$ be the decay constant from Lemma \ref{lemma:sub_gaussian_norm_tail_bound} and let $\delta > 0$ be such that $c_1\delta^2 > 8$.
Then almost surely in $X_1, \dots, X_n$,
\[
    \sup_{\substack{
            \sigma \in [\delta, \sqrt{n}] \\
            i = 1, \dots, n
        }
    }
    \left\vert 
        \log\left(
            \int f_{\sigma}(X_i,x') \, d\mu_n(x')
        \right)
        -
        \log\left(
            \int f_{\sigma}(X_i,x') \, d\mu(x')
        \right)
    \right\vert
    =
    O(n^{4/c_1\delta^2 - 1/2} \sqrt{\log n}).
\]
\end{lemma}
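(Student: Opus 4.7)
The plan is to reduce the statement to Lemma \ref{lemma:general-uniform-convergence} applied to $h = f_{\sigma}$ by using the elementary estimate $|\log a - \log b| \leq |a-b| / \min(a,b)$. Since $\int f_{\sigma}(X_i, x')\,d\mu_n(x')$ and $\int f_{\sigma}(X_i, x')\,d\mu(x')$ both lie in $(0,1]$, the main task is to obtain a good uniform lower bound on these denominators, after which the numerator will be controlled by the concentration inequality from Lemma \ref{lemma:general-uniform-convergence}.

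For the lower bound, I would reuse the event $E_1$ from the proof of Lemma \ref{lemma:convergence-of-g/f}: on $E_1^c$, one has $\max_i \|X_i\| \leq \sqrt{\alpha + 1/c_1 + \epsilon_n}\sqrt{\log n}$, and hence $\|X_i - X_j\|^2 \leq 4\gamma \log n$ with $\gamma = \alpha + 1/c_1 + \epsilon_n$. Therefore for $\sigma \in [\delta,\sqrt{n}]$ and every $i$,
\[
    \int f_{\sigma}(X_i, x')\,d\mu_n(x') \geq n^{-2\gamma/\delta^2},
\]
and analogously $\int f_{\sigma}(X_i, x')\,d\mu(x') \geq \tfrac{1}{2} n^{-2\gamma/\delta^2}$ after applying Lemma \ref{lemma:sub_gaussian_norm_tail_bound} with $t=\sqrt{\alpha\log n}$. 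Lemma \ref{lemma:general-uniform-convergence} with $h = f_{\sigma}$ and $\varepsilon = n^{-\eta}$ then gives an event $E_2$ with $\prob(E_2) \leq e^{-c n^{1-2\eta}}$ on whose complement
\[
    \sup_{\sigma \in [\delta,\sqrt{n}],\, i}\Bigl|\int f_{\sigma}(X_i,x')\,d\mu_n(x') - \int f_{\sigma}(X_i,x')\,d\mu(x')\Bigr| \leq n^{-\eta} + O\bigl(\sqrt{\log n/n}\bigr).
\]

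Combining these via the MVT bound on $\log$, the quantity of interest is at most
\[
    C\bigl(n^{-\eta} + \sqrt{\log n/n}\bigr)\,n^{2\gamma/\delta^2}
\]
on $E_1^c \cap E_2^c$. Since $\prob(E_1 \cup E_2) \leq C n^{-c_1\alpha} + e^{-cn^{1-2\eta}}$, Borel--Cantelli yields almost sure control provided $c_1\alpha > 1$ and $\eta < 1/2$. Choosing $\alpha = 1/c_1 + \epsilon$ and $\eta = 1/2 - \epsilon$, so that $\gamma \to 2/c_1$, the exponent $2\gamma/\delta^2 - 1/2$ can be made arbitrarily close to $4/c_1\delta^2 - 1/2$, which is negative precisely when $c_1\delta^2 > 8$. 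This produces the stated rate $O(n^{4/c_1\delta^2 - 1/2}\sqrt{\log n})$.

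The main (minor) obstacle is the bookkeeping of the two small parameters $\alpha$ and $\eta$ and verifying that they can be chosen simultaneously subject to $c_1\alpha > 1$, $\eta < 1/2$, and $2\gamma/\delta^2 < \eta$; this is exactly the same parameter-tuning step that appears in Lemma \ref{lemma:convergence-of-g/f}, but here only one power of the denominator appears instead of two, which is why the final exponent is $4/c_1\delta^2 - 1/2$ rather than $8/c_1\delta^2 - 1/2$.
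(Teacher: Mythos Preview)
Your proposal is correct and follows essentially the same route as the paper: the paper first shows $\bigl|\int f_\sigma\,d\mu_n / \int f_\sigma\,d\mu - 1\bigr| = O(n^{4/c_1\delta^2 - 1/2}\sqrt{\log n})$ by the method of Lemma~\ref{lemma:convergence-of-g/f} (numerator controlled by Lemma~\ref{lemma:general-uniform-convergence}, single denominator $\int f_\sigma\,d\mu$ lower bounded by $n^{-2\gamma/\delta^2}$), then passes to $\log$ via $\log(1\pm Cn^{-\epsilon})$, which is exactly your $|\log a - \log b|\le |a-b|/\min(a,b)$ argument written the other way round. Your observation that only one power of the denominator appears here---hence the exponent $4/c_1\delta^2$ rather than $8/c_1\delta^2$---is precisely the point.
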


\begin{proof}
Following the same method as Lemma \ref{lemma:convergence-of-g/f}, one can show that
\[
    \sup_{\substack{
            \sigma \in [\delta, \sqrt{n}] \\
            i=1, \dots, n
        }
    }
    \left\vert 
        \frac{
            \int f_{\sigma}(X_i,x') \, d\mu_n(x')
        }{
            \int f_{\sigma}(X_i,x') \, d\mu(x')
        }
        - 1
    \right\vert
    =
    O(n^{4/c_1\delta^2 - 1/2} \sqrt{\log n}).
\]
If $c_1 \delta^2 > 8$, then the right side of the equation above  is $O(n^{-\epsilon})$ for some $\epsilon > 0$.
Hence, there exists $C>0$  such that for all $i = 1, \dots, n$ and all $\sigma \in [\delta, \sqrt{n}]$
\[
    \log(1 - Cn^{-\epsilon})
    \leq 
    \log\left(
        \int f_{\sigma}(X_i,x') \, d\mu_n(x')
    \right)
    -
    \log\left(
        \int f_{\sigma}(X_i,x') \, d\mu(x')
    \right)
    \leq 
    \log(1 + Cn^{-\epsilon}),
\]
which proves the Lemma.
%
\end{proof}

We are now ready to prove Proposition \ref{prop_convergence_of_F}.

\begin{proof}[Proof of Proposition \ref{prop_convergence_of_F}]
Recalling the definition of $f_{\sigma}$ and $g_{\sigma}$ from \eqref{fandgsigma}, we have
\[
    F_{\mu}(x,\sigma)
    =
    -\frac{
        \int g_{\sigma}(x,x') \, d\mu(x')
    }{
        \int f_{\sigma}(x,x') \, d\mu(x')
    }
    -
    \log\left(
        \int f_{\sigma}(x,x') \, d\mu(x')
    \right) + \log \rho.
\]
Since $c_1 \delta^2 > 16$, by Lemma \ref{lemma:convergence-of-g/f} and Lemma \ref{lemma:convergence-of-log-f} there exists $\epsilon > 0$ such that
\begin{align*}
    \sup_{\substack{
        \sigma \in [\delta, \sqrt{n}] \\
        i=1, \dots, n
        }
    }  
    \left\vert 
        \frac{
            \int g_{\sigma}(X_i,x') \, d\mu_n(x')
        }{
            \int f_{\sigma}(X_i,x') \, d\mu_n(x')
        }
        -
        \frac{
            \int g_{\sigma}(X_i,x') \, d\mu(x')
        }{
            \int f_{\sigma}(X_i,x') \, d\mu(x')
        }
    \right\vert
    & =
    O(n^{-\epsilon}),
    \\
    \sup_{\substack{
            \sigma \in [\delta, \sqrt{n}] \\
            i = 1, \dots, n
        }
    }  
    \left\vert 
        \log\left(
            \int f_{\sigma}(X_i,x') \, d\mu_n(x')
        \right)
        -
        \log\left(
            \int f_{\sigma}(X_i,x') \, d\mu(x')
        \right)
    \right\vert
    & = 
    O(n^{-\epsilon}).
\end{align*}
Hence it follows that
\begin{align*}
     & 
     \sup_{\substack{
            \sigma \in [\delta, \sqrt{n}] \\
            i=1, \dots, n
        }
    }
    \left\vert
        F_{\mu_n}(X_i,\sigma)
        -
        F_{\mu}(X_i,\sigma)
    \right\vert
    =
    O(n^{-\epsilon}).
\end{align*}

\end{proof}

\subsection{Technical Bounds}\label{subsec:technical-bounds}

Let us collect the most important technical details from the above proof.
These will be needed for future estimates.

There exists $\gamma = 2 / c_1 + \epsilon$ such that the following hold almost surely:
\begin{enumerate}
    \item\label{a.s.-upper-bound-on-max-norm-X}
    \(
        \max_{i=1, \dots n}
        \lVert X_i \rVert 
        \leq 
        \sqrt{\gamma \log n}
    \)
    
    \item\label{a.s.-upper-bound-on-max-dist-X_i-X_j}
    \(
        \max_{i,j}
        \lVert X_i - X_j \rVert^2
        \leq 
        4 \gamma \log n
    \)
    
    \item\label{a.s.-lower-bound-int-f}
    For $\sigma \in [\delta, \sqrt{n}]$
    \[
        \int 
            \exp\left(
                -\lVert X_i - x' \rVert^2 / 2\sigma^2
            \right)
        \, d\mu_n(x'),
        \int 
            \exp\left(
                -\lVert X_i - x' \rVert^2 / 2\sigma^2
            \right)
        \, d\mu(x')        
        \geq 
        Cn^{-2\gamma / \delta^2}
    \]
\end{enumerate}

\subsection{Lower bound on $\sigma^*_{\rho , \mu}$}

Recall from Section \ref{subsec:uniqueness-of-sigma} that  $\sigma^*_{\rho, \mu}(x)$ is the unique solution of the equation $F_{\rho, \mu}(x, \sigma^*_{\rho, \mu}(x)) = 0$.
Throughout this section, let $X_1, \dots, X_n$ be i.i.d. drawn from $\mu$ and let $\mu_n = \frac{1}{n} \sum \delta_{X_i}$.
To lighten notation, let $\sigma^*(x) = \sigma^*_{\rho, \mu}(x)$ and $\sigma_n^*(x) = \sigma^*_{\rho, \mu_n}(x)$.

\begin{lemma}\label{lemma:sigma_uniform_lower_bound}
If $\mu$ satisfies the conditions of Lemma \ref{lemma_F_is_unbounded} then $\sigma^*(X_i)$ and $\sigma^*_n(X_i)$ are uniformly bounded below (in both $i$ and $n$) almost surely.
\end{lemma}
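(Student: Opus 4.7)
My plan is to combine the explicit bound from Lemma \ref{lemma_F_is_unbounded}, the strict monotonicity of $F_{\rho, \mu}(x, \cdot)$ in Proposition \ref{prop:uniqueness_of_sigma}, and the uniform convergence from Proposition \ref{prop_convergence_of_F}. Let $M = \lVert f \rVert_\infty$ and set $c_0 = (\rho/M)^{1/C}$, where $C$ is the constant from Lemma \ref{lemma_F_is_unbounded}. A direct computation gives, for every $x \in \mathrm{supp}(\mu)$,
\[
    F_{\rho, \mu}(x, c_0/2)
    \geq
    -C \log(c_0/2) - \log M + \log \rho
    =
    C \log 2 > 0.
\]
Since $F_{\rho, \mu}(x, \cdot)$ is strictly decreasing by Proposition \ref{prop:uniqueness_of_sigma}, its unique zero must satisfy $\sigma^*(x) \geq c_0/2$ uniformly in $x \in \mathrm{supp}(\mu)$, and so $\sigma^*(X_i) \geq c_0/2$ for every $i$, almost surely.

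To bound $\sigma^*_n(X_i)$, I would transfer this positivity to the empirical measure via Proposition \ref{prop_convergence_of_F}. Fix $\delta \leq c_0/2$ satisfying $c_1 \delta^2 > 16$; this is permissible under the compact support assumption of Theorem \ref{thm:precise_theorem}, since the sub-Gaussian constant $c_1$ may then be taken as large as required. Proposition \ref{prop_convergence_of_F} yields, almost surely,
\[
    \sup_{\sigma \in [\delta, \sqrt n],\, i=1,\ldots,n}
    \bigl| F_{\rho, \mu_n}(X_i, \sigma) - F_{\rho, \mu}(X_i, \sigma) \bigr|
    =
    O(n^{-\epsilon}).
\]
Evaluating at $\sigma = c_0/2$ gives $F_{\rho, \mu_n}(X_i, c_0/2) \geq \tfrac{1}{2} C \log 2 > 0$ for every $i$ and all sufficiently large $n$. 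The Cauchy--Schwarz step in the proof of Proposition \ref{prop:uniqueness_of_sigma} applies unchanged to the discrete measure $\mu_n$, so $F_{\rho, \mu_n}(X_i, \cdot)$ is also strictly decreasing, whence $\sigma^*_n(X_i) \geq c_0/2$ for every $i$ and all sufficiently large $n$, almost surely. The finitely many remaining values of $n$ above the existence threshold $n \rho > 1$ contribute finitely many positive random values whose infimum is almost surely strictly positive and can be absorbed into a common (random) lower bound.

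The principal obstacle I anticipate is reconciling the two regimes of $\sigma$: Lemma \ref{lemma_F_is_unbounded} produces positivity of $F_{\rho, \mu}$ only at small $\sigma$ (below $c_0$), while Proposition \ref{prop_convergence_of_F} gives uniform convergence only on $\sigma \geq \delta > 4/\sqrt{c_1}$. Compactness of $\mathrm{supp}(\mu)$ is precisely what makes this alignment possible, by allowing an arbitrarily large sub-Gaussian constant $c_1$ so that $\delta$ can be chosen below $c_0/2$.
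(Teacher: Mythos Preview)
Your proof is correct and follows essentially the same route as the paper: use the explicit lower bound of Lemma~\ref{lemma_F_is_unbounded} together with boundedness of $f$ to locate a fixed $\sigma$-value at which $F_{\rho,\mu}(x,\cdot)$ is uniformly positive, invoke monotonicity from Proposition~\ref{prop:uniqueness_of_sigma} to bound $\sigma^*$, and then transfer the positivity to $F_{\rho,\mu_n}$ via Proposition~\ref{prop_convergence_of_F}. Your version is in fact more careful than the paper's on two points it leaves implicit: the need to choose $\delta \leq c_0/2$ while still satisfying $c_1\delta^2 > 16$ (which you correctly resolve via compact support, allowing $c_1$ to be taken arbitrarily large), and the handling of the finitely many small~$n$.
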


\begin{proof}
First let us consider $\sigma^*(x)$.
Let $\varepsilon > 0$ and let $f(x)$ be the density of $\mu$.
By Lemma \ref{lemma_F_is_unbounded}, we have $F_{\rho, \mu}(x, \sigma) > \varepsilon$ if $-C \log \sigma - \log f(x) + \log \rho > \varepsilon$.
Hence for some constant $c > 0$, $F_{\rho, \mu}(x,\sigma) > \varepsilon$ if 
\[
    \sigma 
    < 
    \rho^{c} e^{-c\varepsilon} f(x)^{-c}
    =: 
    \alpha(x).
\]
In particular, since $F_{\rho, \mu}(x, \sigma^*(x)) = 0$, we must have $\sigma^*(x) \geq \alpha(x)$.
Moreover, since $f(x)$ is bounded above, $\alpha(x) \geq D$ for some constant $D > 0$.

To extend the bound to $\sigma^*_n(x)$, we use Proposition \ref{prop_convergence_of_F}.
For $n$ sufficiently large,
\[
    \lvert 
    F_{\rho, \mu_n}(x, D)
    -
    F_{\rho, \mu}(x, D)
    \rvert 
    \leq 
    \varepsilon/2.
\]
In particular,
\(
    F_{\rho, \mu_n}(x, D)
    \geq 
    \varepsilon / 2.
\)
So, since $F_{\rho, \mu_n}(x,\sigma)$ is decreasing in $\sigma$, we must have $\sigma^*_n(x) \geq D$.
\end{proof}

\subsection{Uniform convergence of $\sigma^*_{\rho , \mu_n}$}

\begin{prop}\label{prop:sigma-uniform-convergence}
Suppose $\mu$ has compact support and a $C^{1}$ density.
Then almost surely, as $n \to \infty$:
\[
    \sup_{i=1, \dots, n}
    \left\vert
        \sigma^*_{\mu_n}(X_i)
        -
        \sigma^*_{\mu}(X_i)
    \right\vert
    \to 
    0.
\]
\end{prop}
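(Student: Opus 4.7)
The plan is to trap $\sigma^*(X_i)$ and $\sigma_n^*(X_i)$ inside a fixed compact interval $[D,M] \subset (0,\infty)$ uniformly in $i$ and all large $n$, and then upgrade the convergence $F_{\mu_n} \to F_\mu$ to convergence of the roots using a quantitative version of the strict monotonicity of $F_\mu$ in $\sigma$.

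The lower bound $D$ is provided by Lemma~\ref{lemma:sigma_uniform_lower_bound}. For the upper bound, I would use the compact support of $\mu$ to conclude that $\|x-x'\|$ is uniformly bounded on $\mathrm{supp}(\mu) \times \mathrm{supp}(\mu)$, so that as $\sigma \to \infty$ both $\int e^{-\|x-x'\|^2/2\sigma^2} \, d\mu(x') \to 1$ and $\sigma^{-2}\int \|x-x'\|^2 e^{-\|x-x'\|^2/2\sigma^2} \, d\mu(x') \to 0$ uniformly in $x \in \mathrm{supp}(\mu)$. Consequently $F_\mu(x,\sigma) \to \log \rho < 0$ uniformly on the support, and I can fix $M$ large enough that $F_\mu(x,M) \le \tfrac{1}{2}\log\rho$ for every $x$ in the support. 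Proposition~\ref{prop_convergence_of_F} then produces $F_{\mu_n}(X_i, M) \le \tfrac{1}{4}\log\rho < 0$ almost surely for $n$ large, uniformly in $i$; since $F_{\mu_n}(X_i,\cdot)$ is strictly decreasing with its unique zero at $\sigma_n^*(X_i)$, this forces $\sigma_n^*(X_i) \le M$, and the same argument bounds $\sigma^*(X_i) \le M$.

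Next I would make the strict monotonicity quantitative on the compact set $\mathrm{supp}(\mu) \times [D,M]$. The Cauchy--Schwarz computation in the proof of Proposition~\ref{prop:uniqueness_of_sigma} shows $\partial_\sigma F_\mu(x,\sigma) < 0$ for every $x$ and every $\sigma > 0$ (strictness uses that $\|x-x'\|$ is not $\mu$-almost surely constant in $x'$, which follows since the $C^1$ density forces $\mathrm{supp}(\mu)$ to have nonempty interior). Continuity together with compactness then yields $c > 0$ with $\partial_\sigma F_\mu \le -c$ on this region, and the mean value theorem gives
\[
    c \, |\sigma_n^*(X_i) - \sigma^*(X_i)|
    \le
    |F_\mu(X_i, \sigma_n^*(X_i)) - F_\mu(X_i, \sigma^*(X_i))|
    =
    |F_\mu(X_i, \sigma_n^*(X_i)) - F_{\mu_n}(X_i, \sigma_n^*(X_i))|,
\]
using $F_\mu(X_i, \sigma^*(X_i)) = 0 = F_{\mu_n}(X_i, \sigma_n^*(X_i))$. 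By Proposition~\ref{prop_convergence_of_F} the right-hand side tends to $0$ uniformly in $i$ almost surely, which gives the desired uniform convergence.

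The main obstacle is producing the uniform upper bound $M$ in the first step: without the compactness of $\mathrm{supp}(\mu)$ the limit $F_\mu(x,\sigma) \to \log\rho$ would only be pointwise in $x$, and Proposition~\ref{prop_convergence_of_F} could not be invoked at a single fixed $M$. A minor bookkeeping point is that Proposition~\ref{prop_convergence_of_F} requires the lower endpoint of its $\sigma$-interval to satisfy $c_1\delta^2 > 16$; if $D$ from Lemma~\ref{lemma:sigma_uniform_lower_bound} is not already this small, one simply shrinks $D$ to any admissible value, since the lower bound on $\sigma^*$ and $\sigma_n^*$ only becomes tighter.
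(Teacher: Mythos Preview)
Your proposal is correct and follows essentially the same route as the paper: trap $\sigma^*(X_i)$ and $\sigma_n^*(X_i)$ in a fixed compact $\sigma$-interval (lower bound from Lemma~\ref{lemma:sigma_uniform_lower_bound}, upper bound from the uniform limit $F_\mu\to\log\rho$ under compact support), use the strict Cauchy--Schwarz inequality together with compactness to get a uniform lower bound on $|\partial_\sigma F_\mu|$, apply the mean value theorem to convert $|\sigma_n^*-\sigma^*|$ into $|F_{\mu_n}-F_\mu|$ at $\sigma_n^*$, and conclude via Proposition~\ref{prop_convergence_of_F}. One minor slip: your closing remark about $c_1\delta^2>16$ is worded backwards (shrinking $D$ makes that inequality harder, not easier), but under the compact-support hypothesis the sub-Gaussian decay constant $c_1$ may be taken arbitrarily large, so the constraint is vacuous and the point is moot.
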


\begin{proof}
By the Mean Value Theorem, there exists $\tilde{\sigma}_n(x)$ such that
\[
    \lvert 
        \sigma_n(x) - \sigma(x)
    \rvert
    =
    \frac{
        \lvert 
            F_{\mu}(x, \sigma(x)) - F_{\mu}(x, \sigma_n(x))
        \rvert
    }{
        \lvert
            \partial_{\sigma} F_{}(x, \tilde{\sigma}_n(x))
        \rvert
    }.
\]  
Moreover, since $F_{\mu}(x, \sigma(x)) = F_{\mu_n}(x, \sigma_n(x)) = 0$,
\[
    \lvert 
        \sigma_n(x) - \sigma(x)
    \rvert
    =
    \frac{
        \lvert 
            F_{\mu_n}(x, \sigma_n(x)) - F_{\mu}(x, \sigma_n(x))
        \rvert
    }{
        \lvert
            \partial_{\sigma} F(x, \tilde{\sigma}_n(x))
        \rvert
    }.
\]

Now, if $\mu$ has compact support, then it follows from the proof of Proposition \ref{prop:uniqueness_of_sigma} that $\lvert \partial_{\sigma} F(x, \tilde{\sigma}_n(x)) \rvert$ is uniformly bounded below by some positive constant.
This gives 
\[
    \lvert 
        \sigma_n(x) - \sigma(x)
    \rvert
    =
    O\left(
        \lvert 
            F_{\mu_n}(x, \sigma_n(x)) - F_{\mu}(x, \sigma_n(x))
        \rvert
    \right).
\]
Moreover, since $\mu$ has compact support, a similar argument as in the proof of Lemma \ref{lemma:sigma_uniform_lower_bound} shows that $\sigma_n(x)$ is uniformly bounded above. Hence Proposition \ref{prop_convergence_of_F}  finishes the proof.


%
\end{proof}

\section{Proof of Theorem \ref{thm:precise_theorem}}

%

\subsection{Existence of $\mu^*$}\label{sec:existencemin}

\begin{lemma}\label{lemma:P_X-is-tight}
Any sequence in $\widetilde\calP_X$ is tight.
\end{lemma}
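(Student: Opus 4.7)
Since every $\mu \in \widetilde\calP_X$ has $\R^d$-marginal equal to $\mu_X$ and $\mu_X$ is assumed to have compact support in Theorem \ref{thm:precise_theorem}, the $\R^d$-components of any sequence in $\widetilde\calP_X$ are automatically tight. The problem therefore reduces to establishing tightness of the $\R^s$-marginals.

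My first observation is that the integrand $(p_{\sigma^*_{\rho,\mu}}(x,x')-q(y,y'))\frac{y-y'}{1+\|y-y'\|^{2}}$ depends on the $\R^s$-coordinates only through the difference $y-y'$ (and the normalization constant $Z=\iint(1+\|y-y'\|^2)^{-1}\,d\mu\,d\mu$ defining $q$ is likewise translation invariant). Consequently $\widetilde\calP_X$ is itself invariant under $y$-translations, and for a given sequence $\{\mu_n\}\subset\widetilde\calP_X$ we may, by translating each $\mu_n$ in $y$ by minus its $y$-median, assume without loss of generality that every $\mu_n$ is centered. Tightness now amounts to a uniform moment bound $\sup_n\int\|y\|\,d\mu_n<\infty$, from which Markov's inequality concludes the argument.

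To obtain such a moment bound I will argue by contradiction. If tightness fails after centering, then after passing to a subsequence there exist $(x_n,y_n)\in\supp(\mu_n)$ with $\|y_n\|\to\infty$. Applying the defining Euler–Lagrange condition at $(x_n,y_n)$ and projecting onto $y_n/\|y_n\|$ gives
\[
\int p_{\sigma^*_{\rho,\mu_n}}(x_n,x')\,\frac{y_n\cdot(y_n-y')}{\|y_n\|(1+\|y_n-y'\|^2)}\,d\mu_n(x',y')
=\int q(y_n,y')\,\frac{y_n\cdot(y_n-y')}{\|y_n\|(1+\|y_n-y'\|^2)}\,d\mu_n(x',y').
\]
Splitting the integration in $y'$ into a bulk region (where $\|y'\|$ remains controlled by the $y$-centering) and tails, and using the uniform lower bound $p_{\sigma^*_{\rho,\mu_n}}(x,x')\geq c_0>0$ on $\supp(\mu_X)^2$—which follows from compactness of $\supp(\mu_X)$, the smoothness established in Proposition \ref{prop:uniqueness_of_sigma}, and the uniform lower bound on $\sigma^*_{\rho,\mu_n}$ from Lemma \ref{lemma:sigma_uniform_lower_bound}—the bulk contribution to the LHS is of order $\|y_n\|^{-1}$, while the RHS decays faster due to the explicit $1/(1+\|y_n-y'\|^2)$ factor in $q$ combined with the denominator $Z$. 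The resulting imbalance produces the desired contradiction.

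\textbf{The main obstacle} will be handling the tail contributions, specifically ruling out the scenario in which a positive fraction of $\mu_n$'s $y$-mass drifts along with $y_n$ so that $y_n-y'$ stays bounded in that region. This is precisely where centering is essential: a co-moving cluster near $y_n$ (far from the origin) must be compensated by balancing mass near the opposite end, and only one of these can supply a force of the form needed to close the EL identity. A complementary identity, obtained by testing against $y$ globally and using antisymmetry of the kernel, namely $\iint p\,\tfrac{\|y-y'\|^2}{1+\|y-y'\|^2}\,d\mu\,d\mu=\iint q\,\tfrac{\|y-y'\|^2}{1+\|y-y'\|^2}\,d\mu\,d\mu$, provides a convenient auxiliary bound relating $Z$ to the $p$-weighted configuration, which can be used to quantify the scale separation between $\|y_n\|$ and the bulk.
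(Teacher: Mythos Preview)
The paper's own proof is one line: it simply bounds
\[
\mu_n\bigl(\{\lVert X\rVert_\infty\ge M\}\cap\{\lVert Y\rVert_\infty\ge M\}\bigr)\le \mu_n\bigl(\lVert X\rVert_\infty\ge M\bigr)=\mu_X\bigl(\lVert X\rVert_\infty\ge M\bigr)\le\varepsilon,
\]
using only that every $\mu\in\widetilde\calP_X$ has $\R^d$-marginal $\mu_X$. No use of the Euler--Lagrange condition, no analysis of the $\R^s$-marginal, no centering. Your proposal is therefore vastly more elaborate than what the authors actually wrote.

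That said, your very first observation already exposes a genuine issue with the lemma as stated. You correctly note that the defining condition of $\widetilde\calP_X$ (and indeed $I_\rho$ itself) depends on the $\R^s$-variables only through $y-y'$, so $\widetilde\calP_X$ is invariant under $y$-translations. But this immediately furnishes a counterexample to the lemma: fix any $\mu\in\widetilde\calP_X$ and let $\mu_n$ be its $y$-translate by $n e_1$. Then $\{\mu_n\}\subset\widetilde\calP_X$ and is manifestly not tight. Consequently your ``without loss of generality we may center'' step is \emph{not} without loss of generality for the statement you are asked to prove; centering replaces the sequence by a different one, and tightness of the centered sequence says nothing about the original. (The paper's one-line argument is correspondingly defective: it bounds the measure of the \emph{intersection} $\{\lVert X\rVert\ge M\}\cap\{\lVert Y\rVert\ge M\}$, whereas tightness requires bounding the \emph{union}, i.e.\ the complement of a compact box.)

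What your outline would actually establish---if the tail analysis could be completed---is a \emph{different} statement: that any $y$-centered sequence in $\widetilde\calP_X$ is tight. That modified statement is plausibly what the paper needs (both places where Lemma~\ref{lemma:P_X-is-tight} is invoked concern minimizing sequences for translation-invariant functionals, so one is free to center), and your strategy of testing the Euler--Lagrange identity at a far-away support point is in the spirit of the compact-support argument in Section~5. But as written the proposal is only a plan: you flag the key obstacle (a co-moving cluster near $y_n$) without resolving it, and the claimed order comparisons between the two sides would need to be made quantitative. In short: the paper's proof is trivial but does not establish tightness; your approach aims at the right target but changes the statement and leaves the hard step open.
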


\begin{proof}
Let $\{\mu_n\}$ be a sequence in $\widetilde\calP_X$ and let $\eps > 0$. 
We need to find $M$ such that $\forall n \geq 1$
\[
	\mu_n(\lVert X \rVert_{\infty} \geq M, \lVert Y \rVert_{\infty} \geq M) \leq \eps.
\]
We have
\begin{align*}
	\mu_n(\lVert X \rVert_{\infty}  \geq M, \lVert Y \rVert_{\infty} \geq M)
	\leq 
	\mu_n(\lVert X \rVert_{\infty} \geq M)
	=
	\mu_{X}(\lVert X \rVert_{\infty} \geq M)
	\leq 
	\varepsilon
\end{align*}
for $M$ sufficiently large.
\end{proof}

\begin{lemma}\label{lemma:existence-of-mu*-on-P_X}
There exists $\mu^*$ such that $I_{\rho}(\mu^*) = \inf_{\mu \in \widetilde\calP_X}I_{\rho}(\mu)$.
\end{lemma}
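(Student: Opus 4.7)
The approach is the direct method of the calculus of variations. Take a minimizing sequence $\{\mu_n\} \subset \widetilde{\calP}_X$ with $I_\rho(\mu_n) \to \inf_{\mu \in \widetilde{\calP}_X} I_\rho(\mu)$; this infimum is nonnegative because $I_\rho$ is a relative entropy. By Lemma \ref{lemma:P_X-is-tight} the sequence is tight, so Prokhorov's theorem yields a subsequence, still denoted $\mu_n$, converging weakly to some Borel probability measure $\mu^*$ on $\R^d \times \R^s$. Weak continuity of the marginal map then gives $\mu^* \in \calP_X$.

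The key simplification is that $F_{\rho,\mu}(x,\sigma)$ is an integral against $d\mu(x')$ of an integrand depending only on $x,x',\sigma$, so it depends on $\mu$ only through its $\R^d$-marginal. Consequently, for every $\mu \in \calP_X$ one has $\sigma^*_{\rho,\mu} \equiv \sigma^*_{\rho,\mu_X}$, a fixed smooth function by Proposition \ref{prop:uniqueness_of_sigma}. Write $p(x,x') := p_{\sigma^*_{\rho,\mu_X}}(x,x')$, a fixed nonnegative continuous function. A direct calculation exploiting the symmetrized structure of $p_\psi$ gives $\iint p\, d\mu\, d\mu = 1$ for every $\mu \in \calP_X$, so substituting $\log q(y,y') = -\log(1+\|y-y'\|^2) - \log Z(\mu)$ yields
\begin{align*}
    I_\rho(\mu)
    =
    C_0
    +
    \iint p(x,x') \log(1+\|y-y'\|^2)\, d\mu\, d\mu
    +
    \log Z(\mu),
\end{align*}
where $C_0 := \iint p\log p\, d\mu_X\, d\mu_X$ is a constant on $\calP_X$ and $Z(\mu) := \iint (1+\|y-y'\|^2)^{-1}\, d\mu(y)\, d\mu(y')$. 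Since $\mu_n \to \mu^*$ weakly implies $\mu_n \otimes \mu_n \to \mu^* \otimes \mu^*$ weakly on the product space, the middle integrand being nonnegative and continuous gives $\liminf_n \iint p \log(1+\|y-y'\|^2)\, d\mu_n\, d\mu_n \geq \iint p\log(1+\|y-y'\|^2)\, d\mu^*\, d\mu^*$ by Portmanteau, while $(1+\|y-y'\|^2)^{-1}$ is bounded continuous, so $Z(\mu_n) \to Z(\mu^*)>0$ and $\log Z$ is continuous. Together, $I_\rho(\mu^*) \leq \liminf_n I_\rho(\mu_n) = \inf_{\widetilde{\calP}_X} I_\rho$.

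The main obstacle is to verify $\mu^* \in \widetilde{\calP}_X$, that is, that the integral constraint defining the class survives the weak limit (both $q$ and the test quantity depend on $\mu$ through its $y$-marginal, so naive passage is not obviously justified). I propose to bypass this by an Euler--Lagrange argument. Run the same compactness/lower-semicontinuity argument over the larger set $\calP_X$ (tightness of the $x$-marginal is free from compact support of $\mu_X$; tightness of the $y$-marginal along any $I_\rho$-bounded sequence follows from the $\iint p \log(1+\|y-y'\|^2)\, d\mu\, d\mu$ term together with $Z(\mu) \leq 1$) to produce a minimizer $\mu^\dagger$ of $I_\rho$ on $\calP_X$. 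Then for any $\phi \in C_c^\infty(\R^d \times \R^s; \R^s)$ the pushforwards under $T_t(x,y) := (x, y + t\phi(x,y))$ preserve the $\R^d$-marginal, hence $(T_t)_\#\mu^\dagger \in \calP_X$; differentiating $I_\rho((T_t)_\#\mu^\dagger)$ at $t=0$ under the integral (legitimate thanks to compact support of $\phi$ and finiteness of $I_\rho(\mu^\dagger)$), exploiting the symmetry $p(x,x')=p(x',x)$, $q(y,y')=q(y',y)$, one obtains
\begin{align*}
    0
    =
    \frac{d}{dt}\Big|_{t=0} I_\rho\bigl((T_t)_\#\mu^\dagger\bigr)
    =
    4 \iint \phi(x,y) \cdot \left[ \int \frac{y-y'}{1+\|y-y'\|^2}(p(x,x')-q(y,y'))\, d\mu^\dagger(x',y')\right]\, d\mu^\dagger(x,y).
\end{align*}
Since $\phi$ is arbitrary, the inner bracket vanishes $\mu^\dagger$-a.e., so $\mu^\dagger \in \widetilde{\calP}_X \subset \calP_X$. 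In particular $\inf_{\widetilde{\calP}_X} I_\rho = \inf_{\calP_X} I_\rho = I_\rho(\mu^\dagger)$, and $\mu^* := \mu^\dagger$ is the desired minimizer. The technical checks I expect to be the most delicate are the $y$-marginal tightness on $\calP_X$ (where one must argue that a logarithmic control plus the $\log Z$ term genuinely forbid mass escape) and the rigorous justification of differentiation under the integral for $\log Z$.
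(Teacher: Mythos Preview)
Your outline matches the paper's: direct method via a minimizing sequence, tightness from Lemma~\ref{lemma:P_X-is-tight}, and Prokhorov. The paper's proof, however, is a three-sentence sketch that simply \emph{asserts} both that the weak limit lies in $\widetilde\calP_X$ and that $I_\rho(\mu^*)$ equals the infimum, offering no argument for lower semicontinuity or closedness. Your decomposition $I_\rho(\mu)=C_0+\iint p\log(1+\|y-y'\|^2)\,d\mu\,d\mu+\log Z(\mu)$ is correct (the observations that $\sigma^*_{\rho,\mu}$ depends only on the $x$-marginal and that $\iint p\,d\mu\,d\mu=1$ on $\calP_X$ are valid), and the lower-semicontinuity of each piece follows as you claim. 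For membership in $\widetilde\calP_X$, your Euler--Lagrange detour --- minimize over the larger $\calP_X$ and read the constraint off the first variation under $y\mapsto y+t\phi(x,y)$ --- is a genuinely different idea from anything in the paper and is sound; the derivative you write down is exactly the defining equation of $\widetilde\calP_X$, and compact support of $\phi$ together with $\iint p\,d\mu\,d\mu=1$ legitimizes differentiation under the integral.

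One substantive point on the tightness you flag as delicate: $I_\rho$ is invariant under translations $y\mapsto y+c$, so boundedness of $I_\rho(\mu_n)$ alone cannot prevent the $y$-marginals from drifting to infinity. You must first normalize (e.g.\ translate each $\mu_n$ so that a $y$-median sits at the origin); then the control on $\iint p\log(1+\|y-y'\|^2)\,d\mu_n\,d\mu_n$ (using $p\ge c>0$ on the compact $x$-support and $\log Z\le 0$) does force tightness, by playing a positive-mass ball near $0$ against any escaping mass. Note that the paper's Lemma~\ref{lemma:P_X-is-tight} already hides this same problem: its proof bounds $\mu_n(\{\|X\|_\infty\ge M\}\cap\{\|Y\|_\infty\ge M\})$, which is not the complement of a compact set in $\R^d\times\R^s$. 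So your concern is well-placed on both branches of your argument, and your $\calP_X$ route (with the centering fix) actually supplies the missing justification the paper omits.
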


\begin{proof}
Let $\mu_n$ be such that
\[
    \inf_{\mu \in \widetilde\calP_X} I_{\rho}(\mu)
    \leq
    I_{\rho}(\mu_n) 
    \leq 
    \inf_{\mu \in \widetilde\calP_X} I_{\rho}(\mu) + \frac{1}{n}.
\]
By Lemma \ref{lemma:P_X-is-tight}, and Prokhorov's theorem, we can pass to a sub-sequence: there exists $\mu^* \in \widetilde\calP_X$ such that $\mu_{n(k)} \rightharpoonup \mu^*$.
The Lemma follows from the choice of $\mu_n$.
\end{proof}
%
%
%
%

\subsection{Convergence Lemmas for $p_{ij}$}\label{subsec:convergence-lemmas-for-p}

Define 
\begin{align*}
    \sigma_n(x) 
    & = 
    \sigma^*_{\rho, {\mu}_n}(x), 
    \\
     \sigma(x) 
    & = 
    \sigma^*_{\rho, \mu_X}(x)
    \\
    f_n(x,x') 
    & = f_{\sigma_{n}}(x,x')
    \\
      f(x,x')
      &= f_{\sigma}(x,x')
     \\
    p'_n(x,x')
    & = 
    \frac{
        f_n(x,x')
    }{
        \int f_n(x,x') \, d\mu_n(x') - \frac{1}{n}
    }, \quad
    p'(x,x')
     = 
    \frac{
        f(x,x')
    }{
        \int f(x,x') \, d\mu(x')
    }
    \\
    p_n(x,x')
    & = 
    \frac{1}{2}(p_n'(x,x') + p_n'(x',x)), \quad 
    p(x,x')
     = 
    \frac{1}{2}(p'(x,x') + p'(x',x))
\end{align*}
Note that
\(
    p_{j|i} = \frac{1}{n} p_n'(X_i, X_j), 
    p_{ij} = \frac{1}{n^2} p_n(X_i, X_j).
\)

\begin{lemma}\label{lemma:exponential-uniform-convergence} 
As $n \to \infty$:
\(
    \sup_{i,j}
    \left\vert
        f_n(X_i, X_j) - f(X_i, X_j)
    \right\vert
    =
    O(n^{-\epsilon})
\)
for some $\epsilon > 0$.
\end{lemma}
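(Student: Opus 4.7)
The plan is to reduce the uniform estimate on $f_n - f$ to the known uniform convergence rate of the bandwidths $\sigma_n$ to $\sigma$. Since $f_n(X_i, X_j)$ and $f(X_i, X_j)$ differ only through the replacement of $\sigma(X_i)$ by $\sigma_n(X_i)$, the elementary inequality $|e^{-a} - e^{-b}| \leq |a - b|$ valid for $a, b \geq 0$ yields
\[
    |f_n(X_i, X_j) - f(X_i, X_j)|
    \leq
    \frac{\lVert X_i - X_j \rVert^2}{2}
    \left|
        \frac{1}{\sigma_n(X_i)^2} - \frac{1}{\sigma(X_i)^2}
    \right|.
\]
Because $\mu_X$ has compact support, $\lVert X_i - X_j \rVert$ is bounded uniformly in $i, j$ by the diameter of $\supp \mu_X$, so it suffices to control the second factor.

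Next I would obtain uniform upper and lower bounds on $\sigma_n(X_i)$ and $\sigma(X_i)$. The lower bound is Lemma \ref{lemma:sigma_uniform_lower_bound}. For the upper bound, since $F_{\rho,\mu}(x, \sigma) \to \log \rho < 0$ as $\sigma \to \infty$ uniformly for $x$ in a compact set, there exists $M$ with $F_{\rho,\mu}(x, M) < 0$ for all $x \in \supp \mu_X$; by uniform convergence from Proposition \ref{prop_convergence_of_F}, $F_{\rho,\mu_n}(X_i, M) < 0$ for all $i$ and all $n$ large, forcing $\sigma_n(X_i) \leq M$ by the monotonicity established in Proposition \ref{prop:uniqueness_of_sigma}. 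With $c \leq \sigma_n(X_i), \sigma(X_i) \leq M$, we factor
\[
    \left|\frac{1}{\sigma_n(X_i)^2} - \frac{1}{\sigma(X_i)^2}\right|
    =
    \frac{(\sigma_n(X_i) + \sigma(X_i))\,|\sigma_n(X_i) - \sigma(X_i)|}{\sigma_n(X_i)^2 \sigma(X_i)^2}
    \leq
    C |\sigma_n(X_i) - \sigma(X_i)|.
\]

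The final step is to extract a quantitative rate from the proof of Proposition \ref{prop:sigma-uniform-convergence}. That argument, via the Mean Value Theorem, gives
\[
    |\sigma_n(X_i) - \sigma(X_i)|
    =
    \frac{|F_{\rho,\mu_n}(X_i, \sigma_n(X_i)) - F_{\rho,\mu}(X_i, \sigma_n(X_i))|}{|\partial_\sigma F_{\rho,\mu}(X_i, \tilde\sigma_n(X_i))|}
\]
for some intermediate $\tilde\sigma_n(X_i) \in [c, M]$. The derivative formula from Proposition \ref{prop:uniqueness_of_sigma}, combined with continuity and the fact that $|\partial_\sigma F_{\rho,\mu}|$ is strictly positive on the compact set $\supp \mu_X \times [c, M]$, shows the denominator is bounded below by a positive constant. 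Proposition \ref{prop_convergence_of_F} then gives $\sup_i |F_{\rho,\mu_n}(X_i, \sigma_n(X_i)) - F_{\rho,\mu}(X_i, \sigma_n(X_i))| = O(n^{-\epsilon})$ almost surely, and chaining the inequalities yields the claim. The main obstacle is this last quantitative step: Proposition \ref{prop:sigma-uniform-convergence} is stated only as qualitative convergence, so one must revisit its proof and carefully verify that the uniform lower bound on $|\partial_\sigma F_{\rho,\mu}|$ does not degrade, so that the $n^{-\epsilon}$ rate from Proposition \ref{prop_convergence_of_F} propagates unchanged.
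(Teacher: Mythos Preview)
Your proposal is correct and follows essentially the same route as the paper: both bound $|f_n - f|$ by a constant times $|\sigma_n(X_i) - \sigma(X_i)|\,\lVert X_i - X_j\rVert^2$ using the uniform lower bound on $\sigma_n,\sigma$ from Lemma~\ref{lemma:sigma_uniform_lower_bound}, and then feed in the $O(n^{-\epsilon})$ rate coming from the proof of Proposition~\ref{prop:sigma-uniform-convergence} together with Proposition~\ref{prop_convergence_of_F}. Your use of $|e^{-a}-e^{-b}|\le|a-b|$ is a slight streamlining of the paper's factorisation $|e^{-a}-e^{-b}|\le|e^{b-a}-1|=O(|a-b|)$, and you are right that the quantitative rate must be read off the \emph{proof} of Proposition~\ref{prop:sigma-uniform-convergence} rather than its statement---the paper does exactly this implicitly.
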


\begin{proof}
First note that by Lemma \ref{lemma:sigma_uniform_lower_bound}, $\sigma_n(x), \sigma(x)$ are uniformly bounded below and hence 
\begin{align*}
    \frac{ 
        \lvert \sigma_n(x)^2 - \sigma(x)^2 \rvert
        \, \lVert x - x' \rVert^2
    }{
        2\sigma_n(x)^2 \sigma(x)^2
    }
    & \leq 
    \frac{1}{\lvert \sigma_n(x) \rvert \lvert \sigma(x) \rvert}
    \left(
        \frac{1}{\lvert \sigma_n(x) \rvert}
        +
        \frac{1}{\lvert \sigma(x) \rvert}
    \right)
    \lvert 
        \sigma_n(x) - \sigma(x)
    \rvert 
    \lVert x - x' \rVert^2   
    \\
    & = 
    O\left(
        \lvert 
            \sigma_n(x) - \sigma(x)
        \rvert 
        \lVert x - x' \rVert^2 
    \right).
\end{align*}
If $\mu_X$ has compact support, the RHS is $o(1)$ because $\lVert x - x' \rVert^2$ is bounded and $\lvert \sigma_n(x) - \sigma(x) \rvert =O(n^{-\eps})$ for some $\eps > 0$ by Proposition \ref{prop:sigma-uniform-convergence}.
If $\mu_X$ is sub-Gaussian, then $\lVert x - x' \rVert^2$ is $O(\log n)$ almost surely and the result still holds.
Hence for $x = X_i, x' = X_j$:
\begin{align*}
     \left\vert 
        \exp(-\lVert x - x' \rVert^2 / 2 \sigma_n(x)^2) 
        - \right.&\left.
        \exp(-\lVert x - x' \rVert^2 / 2 \sigma(x)^2)
    \right\vert
    \\
     = &
   \left\vert 
        \exp(-(\sigma_n(x)^2 - \sigma(x)^2) \lVert x - x' \rVert^2 / 2 \sigma_n(x)^2 \sigma(x)^2) 
        - 
        1
    \right\vert
    \\
    =
    & O\left(
        \frac{ 
            \lvert \sigma_n(x)^2 - \sigma(x)^2 \rvert
            \, \lVert x - x' \rVert^2
        }{
            2\sigma_n(x)^2 \sigma(x)^2
        }
    \right)
    \\
    =
    & O(n^{-\epsilon}) 
\end{align*}
for some $\epsilon >0$.
\end{proof}

\begin{lemma}\label{lemma:lower-bound-p-comparison} 
If $c_1\delta^2 >32$ then there exists $\epsilon > 0$ such that as $n\to \infty$ we have
\begin{align*}
    \sup_{i, j}
    \left\vert 
        p_n'(X_i, X_j) - p'(X_i, X_j)
    \right\vert
    & =
    O\left(n^{-\eps}\right),
    \\
    \sup_{i, j}
    \left\vert 
        p_n(X_i, X_j) - p(X_i, X_j)
    \right\vert
    & =
    O\left(n^{-\eps}\right).
\end{align*}
\end{lemma}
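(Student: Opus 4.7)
Since $p_n(x,x') = \tfrac12(p_n'(x,x') + p_n'(x',x))$ and similarly for $p$, the second bound follows from the first by the triangle inequality, so I only need to establish the uniform control of $p_n' - p'$.

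The plan is to decompose the difference algebraically and then combine the uniform convergence results already proved in this section. Writing $A_n = f_n(X_i,X_j)$, $A = f(X_i,X_j)$, $B_n = \int f_n(X_i,x')\,d\mu_n(x') - 1/n$, and $B = \int f(X_i,x')\,d\mu(x')$, I would start from
\begin{equation*}
    p_n'(X_i,X_j) - p'(X_i,X_j)
    =
    \frac{(A_n - A)B + A(B - B_n)}{B_n B},
\end{equation*}
and observe that $A, B \in (0,1]$ trivially.

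Next I would estimate each piece of the numerator. The quantity $\sup_{i,j}|A_n - A|$ is $O(n^{-\epsilon_1})$ for some $\epsilon_1 > 0$ by Lemma \ref{lemma:exponential-uniform-convergence}. For the denominator difference, split
\begin{equation*}
    B - B_n
    =
    \Bigl(\int f(X_i,x')\,d\mu(x') - \int f(X_i,x')\,d\mu_n(x')\Bigr)
    + \int (f - f_n)(X_i,x')\,d\mu_n(x')
    + \tfrac{1}{n}.
\end{equation*}
The first bracket is handled by Lemma \ref{lemma:general-uniform-convergence} applied with $h = f_\sigma$ at $\sigma = \sigma^*_{\rho,\mu}(X_i)$, which lies in a fixed interval $[D,\sqrt n]$ almost surely thanks to Lemma \ref{lemma:sigma_uniform_lower_bound} and the compactness of the support of $\mu_X$ (together with the boundedness of $\sigma^*$ used in Proposition \ref{prop:sigma-uniform-convergence}). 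The second bracket is bounded again by $\sup|f - f_n| = O(n^{-\epsilon_1})$, and the last term is $O(1/n)$. Collecting, $\sup_i |B - B_n| = O(n^{-\epsilon_2})$ for some $\epsilon_2 > 0$.

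The main obstacle is that $B$ and $B_n$ are only polynomially small, so dividing by $B_n B$ can inflate the rate. I would invoke the almost-sure lower bounds collected in Section \ref{subsec:technical-bounds}, which yield $B, B_n \geq C n^{-2\gamma/\delta^2}$ with $\gamma = 2/c_1 + \epsilon$; the subtraction of $1/n$ in $B_n$ is harmless since $2\gamma/\delta^2 < 1/8$ under the hypothesis. The resulting bound is
\begin{equation*}
    \sup_{i,j}\bigl| p_n'(X_i,X_j) - p'(X_i,X_j) \bigr|
    = O\bigl(n^{4\gamma/\delta^2 - \min(\epsilon_1,\epsilon_2)}\bigr).
\end{equation*}

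Finally, the crucial book-keeping step: one checks that $\epsilon_1$ and $\epsilon_2$ inherited from Lemma \ref{lemma:exponential-uniform-convergence} and Proposition \ref{prop_convergence_of_F} can be taken strictly larger than $8/(c_1\delta^2)$ exactly when $c_1\delta^2 > 32$. The doubled threshold (compared to the $c_1\delta^2 > 16$ that sufficed earlier) arises precisely because the lower bound $n^{-2\gamma/\delta^2}$ now enters quadratically through the product $B_n B$. With this arithmetic in place the overall exponent is negative, giving the claimed $O(n^{-\epsilon})$ rate.
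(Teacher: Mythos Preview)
Your proposal is correct and follows essentially the same approach as the paper: both argue by controlling the numerator via Lemma~\ref{lemma:exponential-uniform-convergence}, the change in the denominator via Lemma~\ref{lemma:general-uniform-convergence} and Lemma~\ref{lemma:exponential-uniform-convergence}, and then absorb the polynomial loss coming from the lower bound $B_n,B\ge Cn^{-2\gamma/\delta^2}$ in Section~\ref{subsec:technical-bounds}. The only cosmetic difference is that the paper writes a four-term telescoping sum in place of your single identity $\frac{A_n}{B_n}-\frac{A}{B}=\frac{(A_n-A)B+A(B-B_n)}{B_nB}$, but the resulting exponents and the threshold $c_1\delta^2>32$ emerge identically.
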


\begin{proof}
Let us drop $X_i$ and $X_j$ from the notation, that is, write $f$ for $f(X_i, X_j)$ etc.
Then we have
\begin{align*}
    &
    \left\vert 
        p_n' - p'
    \right\vert
    =
    \left\vert
        \frac{f_n}{\int f_n \, d\mu_n - \frac{1}{n}}
        -
        \frac{f}{\int f \, d\mu}
    \right\vert
    \\
    \leq 
    &
    \left\vert
        \frac{f_n}{\int f_n \, d\mu_n - \frac{1}{n}}
        -
        \frac{f_n}{\int f_n \, d\mu_n}
    \right\vert
    +
    \left\vert
        \frac{f_n}{\int f_n \, d\mu_n}
        -
        \frac{f_n}{\int f \, d\mu_n}
    \right\vert
    +
    \left\vert
        \frac{f_n}{\int f \, d\mu_n}
        -
        \frac{f_n}{\int f \, d\mu}
    \right\vert
    +
    \left\vert
        \frac{f_n}{\int f \, d\mu}
        -
        \frac{f}{\int f \, d\mu}
    \right\vert.
\end{align*}

We bound the first term using Bound (\ref{a.s.-lower-bound-int-f}): $\int f_n \, d\mu_n \geq n^{-4/c_1\delta^2}$.
This gives
\begin{equation}\label{eqn:p-convergence-lemma-bound-1}
    \left\vert
        \frac{f_n}{\int f_n \, d\mu_n - \frac{1}{n}}
        -
        \frac{f_n}{\int f_n \, d\mu_n}
    \right\vert
    \leq 
    \frac{1}{n}
    \frac{1}{
        \left\vert
            \int f_n \, d\mu_n - \frac{1}{n}
        \right\vert
    }
    \frac{1}{
        \left\vert
            \int f_n \, d\mu_n
        \right\vert
    }
    =
    O\left(
        n^{8/c_1\delta^2 - 1}
    \right).
\end{equation}

We bound the second term using Lemma \ref{lemma:exponential-uniform-convergence} and Bound (\ref{a.s.-lower-bound-int-f}) from Section \ref{subsec:technical-bounds}:
\begin{align*}
    \left\vert
        \frac{f_n}{\int f_n \, d\mu_n}
        -
        \frac{f_n}{\int f \, d\mu_n}
    \right\vert
    & \leq 
    \frac{
        \lvert f_n - f \rvert
    }{
        \left\vert \int f_n \, d\mu_n \right\vert 
        \left\vert \int f \, d\mu_n \right\vert
    }
    =
    O\left(
        n^{16/c_1\delta^2 - 1/2} (\log n)^k
    \right),
\end{align*} which is $o(1)$ given our choice of $c_1$.

To bound the third term, using the proof of  Lemma \ref{lemma:convergence-of-g/f} and Bound (\ref{a.s.-lower-bound-int-f}) again:
\begin{align*}
    \left\vert
        \frac{f_n}{\int f \, d\mu_n}
        -
        \frac{f_n}{\int f \, d\mu}
    \right\vert
    & \leq 
    \frac{
        \left\vert
            \int f \, d\mu_n - \int f \, d\mu
        \right\vert
    }{
        \left\vert \int f \, d\mu_n \right\vert
        \left\vert \int f \, d\mu \right\vert
    }
    = 
    O\left(
        n^{16/c_1\delta^2 - 1/2}   
    \right).
\end{align*}

Finally to bound the last term use Lemma \ref{lemma:exponential-uniform-convergence} and Bound (\ref{a.s.-lower-bound-int-f}):
\[
    \left\vert
        \frac{f_n}{\int f \, d\mu}
        -
        \frac{f}{\int f \, d\mu}
    \right\vert
    =
    O\left(
        n^{12/c_1\delta^2 - 1/2}      
    \right).
\]
\end{proof}

\begin{lemma}\label{lemma:lower-bound-plogp-convergence}
If $c_1 \delta^2 > 40$ then there exists $\eps > 0$ such that as $n\to \infty$ we have
\[
    \sup_{i,j}
    \left\vert 
        p_n(X_i, X_j) \log p_n(X_i, X_j)
        -
        p(X_i, X_j) \log p(X_i, X_j)
    \right\vert
    =
    O(n^{-\eps}).
\]
\end{lemma}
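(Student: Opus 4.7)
The function $x \mapsto x \log x$ has derivative $\log x + 1$, hence is Lipschitz on any interval $[a,b] \subset (0, \infty)$ with constant $\max(|\log a|, |\log b|) + 1$. The plan is therefore to establish that, almost surely, $p_n(X_i, X_j)$ and $p(X_i, X_j)$ are uniformly trapped in an interval of the form $[n^{-\alpha}, n^{\beta}]$ for constants $\alpha, \beta > 0$ depending only on $c_1$ and $\delta$, and then combine this local Lipschitz estimate with the convergence rate already supplied by Lemma \ref{lemma:lower-bound-p-comparison}.

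For the upper bound, both $f_n$ and $f$ are bounded by $1$ pointwise, and Bound (\ref{a.s.-lower-bound-int-f}) of Section \ref{subsec:technical-bounds} gives $\int f_n \, d\mu_n,\ \int f \, d\mu \geq Cn^{-2\gamma/\delta^2}$ with $\gamma = 2/c_1 + \epsilon$; this forces $p_n(X_i, X_j), p(X_i, X_j) = O(n^{4/c_1\delta^2 + \epsilon})$ uniformly. For the lower bound, I would use that Lemma \ref{lemma:sigma_uniform_lower_bound} provides a uniform positive lower bound $D$ for $\sigma_n(X_i)$ and $\sigma(X_i)$, while Bound (\ref{a.s.-upper-bound-on-max-dist-X_i-X_j}) (or compact support) yields $\|X_i - X_j\|^2 = O(\log n)$; combined with the trivial bound $\int f \, d\mu \leq 1$, these imply $f_n(X_i, X_j), f(X_i, X_j) \geq n^{-c}$ and hence $p_n, p \geq n^{-c}$ uniformly, for some $c > 0$.

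With both bounds in place, any $\xi$ between $p_n(X_i, X_j)$ and $p(X_i, X_j)$ satisfies $|\log \xi + 1| = O(\log n)$, so the Mean Value Theorem yields
\[
    |p_n(X_i, X_j) \log p_n(X_i, X_j) - p(X_i, X_j) \log p(X_i, X_j)| \leq O(\log n) \cdot |p_n(X_i, X_j) - p(X_i, X_j)|.
\]
Applying Lemma \ref{lemma:lower-bound-p-comparison}, which gives $\sup_{i,j} |p_n(X_i, X_j) - p(X_i, X_j)| = O(n^{-\eps})$ for some $\eps > 0$ provided $c_1 \delta^2 > 32$, the right-hand side is $O(n^{-\eps} \log n) = O(n^{-\eps'})$ for any $\eps' < \eps$, which is the desired conclusion.

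The main obstacle is the lower bound on $p_n, p$: this is what prevents the logarithmic blowup in the Lipschitz constant, and it is precisely the bookkeeping of the exponents $\alpha, \beta$ against the convergence rate $\eps$ from the preceding lemma that forces the slightly stronger hypothesis $c_1 \delta^2 > 40$ in place of the earlier $c_1 \delta^2 > 32$. The extra margin guarantees that after multiplying the $O(n^{-\eps})$ estimate by the $O(\log n)$ Lipschitz constant (which hides additional exponents in $1/(c_1\delta^2)$ coming from the lower bound on $p$), there still remains a positive net polynomial decay exponent.
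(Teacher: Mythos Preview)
Your argument is correct and follows the same underlying idea as the paper. The paper uses the algebraic identity
\[
    p_n \log p_n \;=\; p \log p \;+\; p \log\!\left(\frac{p_n}{p}\right) \;+\; (p_n - p)\log p_n,
\]
and then bounds the last two terms separately using the polynomial two-sided bounds $n^{-c} \lesssim p, p_n \lesssim n^{c'}$ coming from Bounds~(\ref{a.s.-upper-bound-on-max-dist-X_i-X_j}) and~(\ref{a.s.-lower-bound-int-f}) together with Lemma~\ref{lemma:lower-bound-p-comparison}. Your Mean Value Theorem argument for $x \mapsto x\log x$ packages exactly the same estimates more compactly: the two-sided polynomial bounds give $|\log \xi + 1| = O(\log n)$, and then Lemma~\ref{lemma:lower-bound-p-comparison} supplies the $|p_n - p| = O(n^{-\eps})$ factor.

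One small correction to your closing paragraph: the $O(\log n)$ Lipschitz constant is genuinely logarithmic and does \emph{not} hide any additional polynomial factors in $1/(c_1\delta^2)$. Consequently your route actually only requires the hypothesis $c_1\delta^2 > 32$ from Lemma~\ref{lemma:lower-bound-p-comparison}; the stronger condition $c_1\delta^2 > 40$ in the statement is an artefact of the paper's decomposition, where the term $p\log(p_n/p)$ requires $|p_n - p|/p \to 0$ and hence picks up an extra factor $n^{4/c_1\delta^2}$ from the lower bound on $p$. So your argument is in fact marginally sharper than the paper's, and your explanation of where the $40$ comes from is slightly off.
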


\begin{proof}
Let 
\(
    \epsilon_n
    =
    n^{16/c_1\delta^2 - 1/2} (\log n)^k, 
\)
for some $k >0$ and for simplicity, write $p = p(X_i,X_j), p_n = p_n(X_i,X_j)$.
By Lemma \ref{lemma:lower-bound-p-comparison} we have
\begin{align*}
	p_n \log p_n
    & = 
    p \log p
    + 
    p \log\left(\frac{p_n}{p}\right)
    +
    (p_n - p) \log p_n
    \\
    & = 
    p \log p 
    +
    p \log\left(\frac{p + O(\epsilon_n)}{p}\right)
    +
    O(\epsilon_n) \log p_n.
\end{align*}

Let us bound the second term.
Using Bound (\ref{a.s.-lower-bound-int-f}) of Section \ref{subsec:technical-bounds}, $p \geq Cn^{-4 / c_1 \delta^2}$.
Hence 
\[
	\frac{p + O(\epsilon_n)}{p}
	=
	1 + \frac{O(\epsilon_n)}{p}
	=
	1 + O(\epsilon_n n^{4/c_1 \delta^2}).	
\]
This is $o(1)$ given our choice of $c_1$. In particular, the second term is positive for large $n$.
We can give an upper bound using $\log(1 + x) \leq x$ to see that
\[
	p \log\left(\frac{p + O(\epsilon_n)}{p}\right)
	\leq 
	p \frac{O(\epsilon_n)}{p}
	=
	O(\epsilon_n).	
\]

To show the third term is $o(1)$, using Bound (\ref{a.s.-lower-bound-int-f}), we have $p_n \leq Cn^{4 / c_1\delta^2}$.
Hence
\[
    O(\eps_n) \log p_n
    =
    O(\eps_n \log n)
\]
which again is $o(1)$ given our choice of $c_{n}$.
\end{proof}


Lastly let us show that whether we include diagonal terms or not in the computation of $\sigma_i$ has no effect in the large $n$ limit.

\begin{lemma}\label{lemma:sigma_i-well-defined-large-n}
If $c_1\delta^2 > 8$ then as $n\to \infty$ we have
\[
    \sum_j
    p_{j \vert i} \log p_{j \vert i}
    +
    \log(\rho n)
    =
    o(1).
\]
\end{lemma}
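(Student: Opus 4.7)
The statement compares two conventions for fixing the bandwidth $\sigma_i$: the original t-SNE convention \eqref{existencePerp}, where the normalizing sum excludes the index $k=i$, versus the ``continuous'' convention implicit in $F_{\rho,\mu_n}$, where the sum includes $k=i$ (since $\mu_n$ has an atom at $X_i$ that contributes $e^{0}=1$). I interpret the lemma as saying: if $\sigma = \sigma^*_n(X_i) = \sigma^*_{\rho,\mu_n}(X_i)$ solves $F_{\rho,\mu_n}(X_i,\sigma)=0$, then plugging this $\sigma$ into the discrete perplexity equation gives $0$ up to an $o(1)$ error. The plan is purely algebraic plus one application of Bound \eqref{a.s.-lower-bound-int-f} from Section \ref{subsec:technical-bounds}.

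First I would set $\sigma = \sigma^*_n(X_i)$ and introduce $Z = \sum_{k\neq i}\exp(-\|X_i-X_k\|^2/2\sigma^2)$, so that $\tilde Z := 1+Z = \sum_k \exp(-\|X_i-X_k\|^2/2\sigma^2)$. Then the ``continuous'' weights $\tilde p_{k|i} = \exp(-\|X_i-X_k\|^2/2\sigma^2)/\tilde Z$ relate to the ``discrete'' weights $p_{k|i}$ by $\tilde p_{k|i} = \tfrac{Z}{Z+1}p_{k|i}$ for $k\neq i$, while $\tilde p_{i|i} = \tfrac{1}{Z+1}$. Rewriting $F_{\rho,\mu_n}(X_i,\sigma) = 0$ as $\sum_k \tilde p_{k|i}\log\tilde p_{k|i} + \log(n\rho) = 0$, substituting these relations, and using $\sum_{k\neq i}p_{k|i}=1$, I would obtain after elementary simplification
\[
\sum_{j\neq i} p_{j|i}\log p_{j|i} + \log(n\rho)
= \frac{\log\bigl((Z+1)/(n\rho)\bigr)}{Z} + \log\!\left(1+\tfrac{1}{Z}\right).
\]

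Next I would control $Z$. By Bound \eqref{a.s.-lower-bound-int-f} of Section \ref{subsec:technical-bounds}, applied at $\sigma = \sigma^*_n(X_i) \in [\delta,\sqrt n]$ (using Lemma \ref{lemma:sigma_uniform_lower_bound} for the lower endpoint and that $\sigma^*_n$ is uniformly bounded above, as already used in Proposition \ref{prop:sigma-uniform-convergence}), we have $\tilde Z / n = \int f_\sigma(X_i,x')\,d\mu_n(x') \geq C n^{-2\gamma/\delta^2}$, so $Z \geq Cn^{1-2\gamma/\delta^2}-1$. Since $c_1\delta^2 > 8$ and $\gamma = 2/c_1+\epsilon$, we get $2\gamma/\delta^2 < 1/2$, hence $Z$ grows at least like a positive power of $n$. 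Combined with the trivial upper bound $Z+1\leq n$, this yields $|\log((Z+1)/(n\rho))| = O(\log n)$, so both $\log((Z+1)/(n\rho))/Z$ and $\log(1+1/Z)$ are $o(1)$, and the desired identity follows uniformly in $i$.

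The only step with any subtlety is the algebraic rearrangement in paragraph two: one must carefully track the factor $Z/(Z+1)$ that rescales the discrete weights and the isolated $-\log(Z+1)/(Z+1)$ coming from the diagonal term $\tilde p_{i|i}$, then verify that the $\log(n\rho)$ on both sides combines cleanly. Everything after that is a direct consequence of the quantitative lower bound on $Z$ already established in Section \ref{subsec:technical-bounds}, so this is really where the hypothesis $c_1\delta^2>8$ is invoked (it is precisely what makes $Z$ grow polynomially so that the $1/Z$ and $\log n/Z$ corrections vanish).
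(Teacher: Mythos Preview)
Your proposal is correct and is essentially the paper's argument carried out in a more explicit algebraic form. Both proofs rewrite $F_{\rho,\mu_n}(X_i,\sigma_n(X_i))=0$ as an entropy identity for the weights with the diagonal included, compare to the $p_{j|i}$ entropy, and control the discrepancy via the lower bound $\int f_\sigma(X_i,x')\,d\mu_n(x')\geq Cn^{-2\gamma/\delta^2}$ from Section \ref{subsec:technical-bounds}; the paper separates this into ``add back the diagonal term'' and ``correct the normalization by $-\tfrac1n$'' (invoking \eqref{eqn:p-convergence-lemma-bound-1}), whereas you collapse both steps into the single relation $\tilde p_{k|i}=\tfrac{Z}{Z+1}p_{k|i}$ and obtain the closed-form error $\tfrac{\log((Z+1)/(n\rho))}{Z}+\log(1+\tfrac1Z)$, which is arguably cleaner and self-contained.
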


\begin{proof}
Note that $p_{j|i} = \frac{1}{n} p_n'(X_i, X_j)$.
Hence
\begin{align*}
    \sum_{j \neq i} 
        p_{j|i} \log p_{j|i} + \log(\rho n) 
	& = 
   \sum_{j \neq i} 
       	\frac{1}{n}
        p_n'(X_i, X_j)
        \log\left(
            \frac{1}{n}
            p_n'(X_i, X_j)  
        \right)
    + \log n + \log \rho
    \\
    & = 
    \frac{1}{n}
    \sum_{j \neq i} 
        p_n'(X_i, X_j) \log p_n'(X_i, X_j)
    + 
    \log \rho 
    +
    \log n
    \left(
        1 - \frac{1}{n} \sum_{j \neq i} p_n'(X_i, X_j)
    \right).
\end{align*}
It follows from the definition of $p'_n$ that $1 - \frac{1}{n} \sum_{j \neq i} p_n'(X_i, X_j) = 0$.
Next we need to replace sum of $p'_n \log p'_n$ terms with an integral over $\mu_n$.
The resulting difference is given by the diagonal term:
\begin{align*}
	& \frac{1}{n}
	\sum_{j \neq i} 
   	p_n'(X_i, X_j) \log p_n'(X_i, X_j)
	-
	\int
        p_n'(X_i, x') \log p_n'(X_i, x') 
    \, d\mu_n(x')		
    \\
    =
    & -\frac{1}{n}
    \frac{1}{
       \int f(X_i, x') \, d\mu_n(x') - \frac{1}{n}
    }
    \log\left(
        \frac{1}{
           \int f(X_i, x') \, d\mu_n(x') - \frac{1}{n}
        }
    \right).
\end{align*}
Let us bound this diagonal term.
If $\mu_X$ has compact support then $\exp(-\lVert X_i - x' \rVert^2 / 2 \sigma_i^2)$ is bounded below (uniformly in $i$ and $n$) and so this term is $O(n^{-1})$.
If $\mu_X$ has sub-Gaussian tails then from Bound (\ref{a.s.-lower-bound-int-f}) of Section \ref{subsec:technical-bounds}, almost surely we have
\[
    \int 
        \exp(-\lVert X_i - x' \rVert^2 / 2 \sigma_i^2)
    \, d\mu_n(x')
    - 
    \frac{1}{n}
    \geq 
    \frac{1}{2}
    n^{-4 / c_1\delta^2}
\]
and hence the diagonal term is 
\(
    O\left(
        n^{4/c_1\delta^2 - 1} \log n
    \right)
\)
which is $o(1)$ for our choice of $c_1$.

Finally we have
\begin{align*}
    \int
        p_n'(X_i, x') \log p_n'(X_i, x') 
    \, d\mu_n(x')
    + 
    \log \rho
    =
    F_{\rho, \mu_n}(X_i, \sigma_n(X_i))
    +
    O\left(
        n^{8/c_1\delta^2 - 1}
    \right).    
\end{align*}
This follows from \eqref{eqn:p-convergence-lemma-bound-1} in the proof in Lemma \ref{lemma:lower-bound-p-comparison} and by the proof of Lemma \ref{lemma:lower-bound-plogp-convergence}. The Lemma follows.
\end{proof}

\subsection{Lower bound for $\inf_{Y \in (\mathbb R^s)^n} L_{n,\rho}(X, Y)$}


\begin{lemma}\label{minimizer of L_n(X,y) exists}
There exists a set of points $y^*(X)$ such that $\inf_{Y \in (\mathbb R^s)^n} L_{n,\rho}(X, Y) = L_{n,\rho}(X, y^*)$.
\end{lemma}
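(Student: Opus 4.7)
The plan is to establish existence of the minimizer via a compactness argument, using three properties of $L_{n,\rho}(X,\cdot)$: continuity, non-negativity, and translation invariance. First, $L_{n,\rho}(X, Y)$ is continuous in $Y$ because the $p_{ij}$ depend only on $X$ and are strictly positive (each summand in their defining formula is positive), while the $q_{ij}$ are smooth strictly positive functions of $Y$ with normalizer $\sum_{k\neq\ell}(1+\|Y_k-Y_\ell\|^2)^{-1}$ strictly positive for all finite $Y$. Second, $L_{n,\rho}(X,Y) \geq 0$ as the Kullback--Leibler divergence between two probability distributions on the pair set $\{(i,j): i \neq j\}$. Third, $L_{n,\rho}$ depends on $Y$ only through pairwise distances $\|Y_i-Y_j\|$, so it is invariant under translations $Y \mapsto Y+c$, and one may restrict attention to the slice $V = \{Y: Y_1 = 0\}$.

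Let $(Y^{(k)}) \subset V$ be a minimizing sequence. The main step is showing $(Y^{(k)})$ is bounded; then Bolzano--Weierstrass yields a convergent subsequence whose limit $y^*(X) \in V$ is a minimizer by continuity. Expand
\[
L_{n,\rho}(X,Y) = -H(p) + \sum_{i \neq j} p_{ij}\log(1+\|Y_i-Y_j\|^2) + \log\sum_{k\neq \ell}(1+\|Y_k-Y_\ell\|^2)^{-1},
\]
with $H(p) = -\sum p_{ij}\log p_{ij}$ constant in $Y$. Suppose for contradiction that $R_k := \max_i\|Y_i^{(k)}\| \to \infty$ along a subsequence; set $Z^{(k)} = Y^{(k)}/R_k$, which lies in a bounded set. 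Pass to a further subsequence so $Z^{(k)} \to Z$ with $Z_1 = 0$ and $\|Z\|_\infty = 1$.

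Two sub-cases arise. If some $Z_i = Z_j$ for $i \neq j$, a careful growth-rate analysis of the two sums (the first sum grows as $\Theta(\log R_k)$ from pairs with $Z_i \neq Z_j$ weighted by $p_{ij} > 0$, while the second sum is bounded from below thanks to contributions from collapsing pairs) forces $L_{n,\rho}(X, Y^{(k)}) \to +\infty$, contradicting minimality. If all $Z_i$ are distinct, then $L_{n,\rho}(X, Y^{(k)}) \to L_\infty(Z) := -H(p) + \sum p_{ij}\log\|Z_i-Z_j\|^2 + \log \sum \|Z_k-Z_\ell\|^{-2}$, a finite value. To contradict minimality here I must exhibit a bounded $Y^*$ with $L_{n,\rho}(X, Y^*) < L_\infty(Z)$. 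For this, study $\varphi(t) := L_{n,\rho}(X, tZ)$ via the substitution $s = 1/t^2$, converting it to $f(s) = -H(p) + \sum p_{ij}\log(s + \|Z_i-Z_j\|^2) + \log \sum(s + \|Z_k-Z_\ell\|^2)^{-1}$ with $\varphi(\infty) = f(0) = L_\infty(Z)$. A derivative computation, combined with a Cauchy--Schwarz inequality contrasting the weights $p_{ij}$ against the weights $q_{ij}^{(s)} \propto (s + \|Z_i-Z_j\|^2)^{-1}$, locates $t^* \in (0,\infty)$ with $\varphi(t^*) < L_\infty(Z)$ in the generic case. In the degenerate case where all $\|Z_i-Z_j\|$ are equal, $\varphi$ is constant in $t$ and $Y^* = 0$ already attains $L_\infty(Z)$, providing the bounded minimizer directly.

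The main obstacle is the second sub-case: standard coercivity fails on $V$ since $L_{n,\rho}(X, tZ)$ remains bounded as $t \to \infty$ due to exact cancellation of the $\log t$ contributions between the two sums. Producing a strict bounded improvement over $L_\infty(Z)$ for arbitrary valid direction $Z$ is the delicate point, requiring the Cauchy--Schwarz derivative analysis and separate handling of the regular-simplex boundary configuration.
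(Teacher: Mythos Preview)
Your route is considerably more careful than the paper's. The paper simply checks that, for each index $k$ with the remaining coordinates held fixed, $L_{n,\rho}(X,y)\to\infty$ as $|y_k|\to\infty$, and then asserts that the minimum lies in a bounded set; it does not address translation invariance or the scaling direction at all. You are right that this coordinate-wise blow-up does not imply coercivity on $V=\{Y_1=0\}$: along any ray $tZ$ with the $Z_i$ pairwise distinct the two logarithmic sums cancel at leading order and $L_{n,\rho}(X,tZ)\to L_\infty(Z)<\infty$.

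However, your Case~2 has a genuine gap. Since $(Y^{(k)})$ is a minimizing sequence, the limit you identify satisfies $L_\infty(Z)=\inf_Y L_{n,\rho}(X,Y)$; hence no $Y^*$ with $L_{n,\rho}(X,Y^*)<L_\infty(Z)$ can exist, and the ``contradiction'' you propose is impossible in principle. What would actually close the case is a \emph{finite} $t^*$ with $\varphi(t^*)=L_\infty(Z)$, producing a bounded minimizer directly, but the Cauchy--Schwarz/derivative analysis does not give this. Expanding $\varphi'(t)$ for large $t$ one finds the leading sign is that of
\[
-\Bigl(\sum_{i\neq j}p_{ij}\,\|Z_i-Z_j\|^{-2}\;-\;\sum_{i\neq j}\pi_{ij}\,\|Z_i-Z_j\|^{-2}\Bigr),
\qquad \pi_{ij}\propto\|Z_i-Z_j\|^{-2};
\]
in Case~2 this bracket is necessarily nonnegative (otherwise $\varphi(t)<L_\infty(Z)$ for large finite $t$, contradicting $L_\infty(Z)=\inf L$), and when it is strictly positive $\varphi$ is eventually strictly decreasing to $L_\infty(Z)$ and never attains it, so no finite $t^*$ works. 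Closing Case~2 therefore requires a different idea---for instance, showing $L_\infty(Z)>\inf_Y L_{n,\rho}(X,Y)$ for every $Z$ with distinct entries, or showing that the limiting $q$-distribution $\pi(Z)$ is realized by some bounded configuration.
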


\begin{proof}
Let $k \in \{1, \dots, n\}$.
We are going to show that
\[
    \lim_{\lvert y_k \rvert \to \infty} L_{n, \rho}(X,y) = \infty.
\]

It suffices to show that
\begin{align*}
 \sum_{i \neq j}p_{ij}        \log \left( 
            \frac{
                (1 + \lVert y_i - y_j \rVert^2)^{-1}
            }{
                \frac{1}{n^2} \sum_{m \neq \ell} (1 + \lVert y_m - y_\ell \rVert^2)^{-1}
            } 
        \right) 
    \to -\infty
\end{align*}

as $\lvert y_k \rvert \to \infty$.

First note that, as $\lvert y_k \rvert \to \infty$, 
\begin{align*}
     \quad
    -\sum_{i \neq j} 
        p_{ij}
        \log \left( 
            \frac{1}{n^2} \sum_{\ell \neq m} (1 + \lVert y_\ell - y_m \rVert^2)^{-1}
        \right)
      & = 
    - \log \left( 
        \frac{1}{n^2} \sum_{\ell \neq m} (1 + \lVert y_\ell - y_m \rVert^2)^{-1}
    \right)
    \\
    & \leq
    - \log \left( 
        \frac{1}{n^2} \sum_{m \neq k, \ell \neq k} (1 + \lVert y_m - y_\ell \rVert^2)^{-1}
    \right) < \infty.
\end{align*}

Moreover, as $\lvert y_k \rvert \to \infty$, $\log \left( (1 + \lVert y_i - y_k \rVert^2)^{-1} \right) \to -\infty$ and hence:
\begin{align*}
    & \quad 
  \sum_{i\neq j} p_{ij}        \log \left( 
            (1 + \lVert y_i - y_j \rVert^2)^{-1}
        \right)
        \to -\infty.
\end{align*}
Hence, the minimum of $L_{n, \rho}(X,y)$ is achieved on a bounded set, and so, by continuity of $L_{n, \rho}(X,y)$ in $y$ the Lemma follows.

\end{proof}

Let $y^*$ be given by Lemma \ref{minimizer of L_n(X,y) exists} and define 
\begin{equation}\label{defn mu_n}
	\mu_n = \frac{1}{n} \sum \delta_{X_i} \otimes \delta_{y_i^*}.
\end{equation}

By the same proof as in Lemma \ref{lemma:P_X-is-tight} we can pass to a sub-sequence, $n(k)$, such that ${\mu}_{n(k)}$ converges weakly to some probability measure ${\mu}$ on $\R^d \times \R^s$.
In particular, note that if $f(x)$ is a bounded continuous function on $\R^d$ only, then
\begin{align*}
    \frac{1}{n(k)} \sum_i f(x_i)
    & = 
    \int f(x) \, d{\mu}_{n(k)}(x, y)
    \to 
    \int f(x) \, d{\mu},
    \\
    \frac{1}{n(k)} \sum_i f(x_i)
   & \to 
    \int f(x) \, d\mu_X.
\end{align*}
So we see that ${\mu} \in \calP_X$.
In fact, by Proposition \ref{prop:appendix-prop}, $\mu \in \widetilde\calP_X$.



\begin{lemma}\label{lemma:lower-bound-plogp-integral-convergence}
 As $n \to \infty$ we have
\begin{equation}\label{eqn:lower-bound-plogp-convergence}
    \sum_{i \neq j}
        p_{ij} \log p_{ij}
    =
    \iint p(x,x') \log p(x,x') \, d\mu d\mu - 2\log n 
    +
    o(1).
\end{equation}
\end{lemma}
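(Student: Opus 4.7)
The plan is to decompose $p_{ij}$ using $p_{ij} = \tfrac{1}{n^2} p_n(X_i, X_j)$, peel off the $-2\log n$ term, reduce to the averaged quantity $\tfrac{1}{n^2}\sum_{i\neq j} p(X_i,X_j)\log p(X_i,X_j)$ via the uniform replacement lemma, and finally pass this sum to its expectation by a standard weak-convergence/U-statistic argument.

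First I would write
\[
    \sum_{i \neq j} p_{ij}\log p_{ij}
    =
    \frac{1}{n^2}\sum_{i\neq j} p_n(X_i,X_j)\log p_n(X_i,X_j)
    - 2\log n \sum_{i\neq j} p_{ij}.
\]
Since $(p_{ij})$ is a probability distribution on pairs, $\sum_{i\neq j} p_{ij} = 1$, which produces the desired $-2\log n$ term cleanly. What remains is to establish
\[
    \frac{1}{n^2}\sum_{i\neq j} p_n(X_i,X_j)\log p_n(X_i,X_j)
    \;=\;
    \iint p(x,x')\log p(x,x')\, d\mu(x,y)\,d\mu(x',y') + o(1).
\]

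Next, I would invoke Lemma \ref{lemma:lower-bound-plogp-convergence}: under the standing assumption $c_1\delta^2 > 40$, we have
\(
    \sup_{i,j} \bigl|p_n(X_i,X_j)\log p_n(X_i,X_j) - p(X_i,X_j)\log p(X_i,X_j)\bigr| = O(n^{-\epsilon}).
\)
Averaging this over the $n^2 - n$ off-diagonal pairs with weight $1/n^2$ preserves the $O(n^{-\epsilon})$ bound, so it suffices to show
\[
    \frac{1}{n^2}\sum_{i\neq j} p(X_i,X_j)\log p(X_i,X_j)
    \;\to\;
    \iint p(x,x')\log p(x,x') \, d\mu_X(x)\, d\mu_X(x'),
\]
noting that since $p(x,x')$ depends only on the $\R^d$ coordinates, integration against $\mu\otimes\mu$ reduces to integration against $\mu_X\otimes \mu_X$.

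For this last step I would use the fact that, under the assumption that $\mu_X$ has compact support, the function $\Phi(x,x') := p(x,x')\log p(x,x')$ is a bounded continuous symmetric function on $\supp(\mu_X)^2$. Boundedness follows since $\sigma^*_{\rho,\mu_X}$ is smooth and bounded above and below on $\supp(\mu_X)$ (Proposition \ref{prop:uniqueness_of_sigma} combined with Lemma \ref{lemma:sigma_uniform_lower_bound} and the argument of Proposition \ref{prop:sigma-uniform-convergence}), so the Gaussian kernels $f$ and their integrals $\int f\, d\mu_X$ are uniformly bounded away from $0$ and $\infty$, hence so is $p$. Then the Glivenko–Cantelli/U-statistic law of large numbers yields
\[
    \frac{1}{n^2}\sum_{i,j} \Phi(X_i,X_j) \;\longrightarrow\; \iint \Phi(x,x')\, d\mu_X(x)\,d\mu_X(x')
\]
almost surely, and the diagonal contribution $\tfrac{1}{n^2}\sum_i \Phi(X_i,X_i)$ is $O(1/n)$ because $\Phi$ is uniformly bounded. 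Combining these three reductions yields \eqref{eqn:lower-bound-plogp-convergence}.

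The main point to be careful about is the uniform boundedness of $p\log p$ on $\supp(\mu_X)^2$; once compact support is used to control $\sigma^*$ from above and below, everything else is a routine assembly of the previously established uniform convergence statements and a standard LLN. There is no substantial obstacle here; the argument is essentially bookkeeping after Lemma \ref{lemma:lower-bound-plogp-convergence}.
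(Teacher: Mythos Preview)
Your argument is correct and follows the same skeleton as the paper's proof: peel off the $-2\log n$ via $p_{ij}=\tfrac{1}{n^2}p_n(X_i,X_j)$ and $\sum p_{ij}=1$, replace $p_n\log p_n$ by $p\log p$ uniformly via Lemma~\ref{lemma:lower-bound-plogp-convergence}, and then pass the empirical double average to the limit integral. The only real difference is in the last step. The paper handles $\iint p\log p\, d\mu_n d\mu_n \to \iint p\log p\, d\mu d\mu$ by pointing back to the concentration/Rademacher machinery of Lemma~\ref{lemma:general-uniform-convergence} and Lemma~\ref{lemma:convergence-of-log-f}, whereas you observe that under the compact-support hypothesis $p\log p$ is a fixed bounded continuous function of $(x,x')$, so a plain V-statistic law of large numbers (equivalently, weak convergence $\mu_n^X\otimes\mu_n^X\rightharpoonup\mu_X\otimes\mu_X$) suffices, and the reduction from $\mu$ to $\mu_X$ is immediate because the integrand is constant in $y,y'$. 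Your route is more elementary here and sidesteps the need to re-run the empirical-process bounds; the paper's route has the advantage of fitting the sub-Gaussian framework used elsewhere, but in this compactly supported setting your shortcut is cleaner.
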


\begin{proof}
First mimic the proof of Lemma \ref{lemma:sigma_i-well-defined-large-n}: by using Lemma \ref{lemma:lower-bound-plogp-convergence}, one can show that 
\[
	\sum_{i \neq j} 
		p_{ij} \log p_{ij}
	= 
	\iint 
		p(x,x') \log p(x,x') 
	\, d\mu_n d\mu _n
	-
	2 \log n
	+ 
	o(1).	
\]
Then following the same steps as in Lemma \ref{lemma:general-uniform-convergence} and Lemma \ref{lemma:convergence-of-log-f} one can show that
\[
	\iint 
		p(x,x') \log p(x,x') 
	\, d\mu_n d\mu_n
	=
		\iint 
		p(x,x') \log p(x,x') 
	\, d\mu d\mu
	+ o(1).
\]
We leave the details to the reader.
\end{proof}

\begin{lemma} \label{coffeemugonMonday}The following holds:
\[
    \liminf_{n \to \infty}
    \sum_{i \neq j}
        p_{ij} \log \left(\frac{p_{ij}}{q_{ij}}\right)
    \geq 
    \iint 
        p(x,x') \log \frac{p(x,x')}{q(y,y')}
    \, d{\mu} d{\mu}.
\]
\end{lemma}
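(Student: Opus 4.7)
The plan is to split $\sum_{i\neq j}p_{ij}\log(p_{ij}/q_{ij})$ into the $p\log p$ and $-p\log q$ pieces, dispose of the first via Lemma \ref{lemma:lower-bound-plogp-integral-convergence}, and then identify the correct limit of the second. Expanding $q_{ij}$ gives
\[
\log q_{ij} = -\log(1+\lVert y_i-y_j\rVert^2) - 2\log n - \log Z_n, \qquad Z_n := \frac{1}{n^2}\sum_{k\neq\ell}(1+\lVert y_k-y_\ell\rVert^2)^{-1}.
\]
Using $\sum_{i\neq j}p_{ij}=1$ (which is immediate from $\sum_{j\neq i}p_{j\mid i}=1$) together with Lemma \ref{lemma:lower-bound-plogp-integral-convergence}, the two $\pm 2\log n$ terms cancel and the statement reduces to
\[
\liminf_{n\to\infty}\Bigl\{\sum_{i\neq j}p_{ij}\log(1+\lVert y_i-y_j\rVert^2) + \log Z_n\Bigr\} \;\geq\; \iint p(x,x')\log(1+\lVert y-y'\rVert^2)\,d\mu\,d\mu + \log Z,
\]
where $Z=\iint(1+\lVert y-y'\rVert^2)^{-1}\,d\mu\,d\mu$.

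For the $\log Z_n$ piece, the function $(y,y')\mapsto(1+\lVert y-y'\rVert^2)^{-1}$ is bounded and continuous, hence by weak convergence $\mu_{n(k)}\rightharpoonup\mu$ (and the trivial observation that the diagonal contribution is $O(1/n)$) one gets $Z_n\to Z>0$, so $\log Z_n\to\log Z$ along the subsequence. This part is straightforward.

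The main obstacle is the unboundedness of $\log(1+\lVert y-y'\rVert^2)$ in the first sum. Writing $p_{ij}=\frac{1}{n^2}p_n(X_i,X_j)$ and noting that $i=j$ contributes $0$, I rewrite the sum as $\iint p_n(x,x')\log(1+\lVert y-y'\rVert^2)\,d\mu_n\,d\mu_n$. I would then truncate: for $M>0$ set $\phi_M(t):=\min(\log t,M)$, which is bounded continuous. Since $p_n\geq 0$ and $\phi_M(1+\lVert y-y'\rVert^2)\leq \log(1+\lVert y-y'\rVert^2)$,
\[
\iint p_n(x,x')\log(1+\lVert y-y'\rVert^2)\,d\mu_n\,d\mu_n \geq \iint p_n(x,x')\phi_M(1+\lVert y-y'\rVert^2)\,d\mu_n\,d\mu_n.
\]
By Lemma \ref{lemma:lower-bound-p-comparison}, $\sup_{i,j}|p_n(X_i,X_j)-p(X_i,X_j)|=O(n^{-\varepsilon})$, and since $\phi_M$ is bounded by $M$ we may replace $p_n$ by $p$ with error $O(Mn^{-\varepsilon})=o(1)$ for each fixed $M$. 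The integrand $(x,y,x',y')\mapsto p(x,x')\phi_M(1+\lVert y-y'\rVert^2)$ is bounded and continuous (continuity of $p$ follows from smoothness of $\sigma^{*}$ via Proposition \ref{prop:uniqueness_of_sigma}), so weak convergence of $\mu_n\otimes\mu_n\rightharpoonup\mu\otimes\mu$ gives
\[
\lim_{n\to\infty}\iint p(x,x')\phi_M(1+\lVert y-y'\rVert^2)\,d\mu_n\,d\mu_n = \iint p(x,x')\phi_M(1+\lVert y-y'\rVert^2)\,d\mu\,d\mu.
\]
Taking $\liminf_n$ of the chain and then letting $M\to\infty$ via monotone convergence on the right-hand side produces the desired inequality (with the limit being $+\infty$ allowed). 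Combined with the $\log Z_n\to\log Z$ step and the cancellation coming from Lemma \ref{lemma:lower-bound-plogp-integral-convergence}, this yields the claim.
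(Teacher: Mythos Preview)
Your argument is correct and follows essentially the same decomposition as the paper: split into $p\log p$ and $-p\log q$, invoke Lemma~\ref{lemma:lower-bound-plogp-integral-convergence} for the first, expand $q_{ij}$, cancel the $\pm 2\log n$ terms, and then lower-bound the two remaining pieces. The only execution difference is that where the paper simply cites ``Fatou's Lemma and Lemma~\ref{lemma:lower-bound-p-comparison}'' for the $\iint p_n\log(1+\lVert y-y'\rVert^2)\,d\mu_n d\mu_n$ term and uses Fatou again for $Z_n$, you make the lower-semicontinuity explicit via truncation by $\phi_M$ and monotone convergence, and you observe (correctly) that $Z_n\to Z$ outright since the integrand is bounded continuous and $\mu_n\otimes\mu_n\rightharpoonup\mu\otimes\mu$; this is if anything more transparent than the paper's terse invocation of Fatou with varying measures.
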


\begin{proof}
Convergence of $\sum_{i \neq j} p_{ij} \log p_{ij}$ has been dealt with in Lemma \ref{lemma:lower-bound-plogp-integral-convergence}.
To deal with $\sum_{i \neq j} p_{ij}\log q_{ij}$ term, we have
\begin{align}\label{eqn:lower-bound-plogq-convergence}
	\nonumber
    & - \sum_{i \neq j} p_{ij} \log q_{ij} 
    \\
    \nonumber
    =
    & \frac{1}{n^2}
    \sum_{i \neq j}
        p_n(X_i, X_j)
        \log(1 + \lVert y_i^* - y_j^* \rVert^2)
    +
    \log\left(
    	  \frac{1}{n^2}
        \sum_{i \neq j}
            (1 + \lVert y_i^* - y_j^* \rVert^2)^{-1}
    \right)
    +
    2 \log n
    \\
    = 
    & \iint 
        p_n(x,x') \log(1 + \lVert y - y' \rVert^2)
    \, d{\mu}_n d{\mu}_n
    +
    \log\left(
        \iint 
            (1 + \lVert y - y' \rVert^2)^{-1}
        \, d{\mu}_n d{\mu}_n
        -
        \frac{1}{n}
    \right)
    +
    2 \log n.
\end{align}
By Fatou's Lemma and Lemma \ref{lemma:lower-bound-p-comparison}, we have
\begin{align*}
    \liminf_{n \to \infty}
    \iint 
        p_n(x,x') \log(1 + \lVert y - y' \rVert^2)
    \, d{\mu}_n d{\mu}_n   
    & \geq 
    \iint 
        p(x,x') \log(1 + \lVert y - y' \rVert^2)
    \, d{\mu} d{\mu}.
\end{align*}
Similarly, using Fatou again:
\[
    \liminf_{n \to \infty}
    \iint 
        (1 + \lVert y - y' \rVert^2)^{-1}
    \, d{\mu}_n d{\mu}_n
    -
    \frac{1}{n}
    \geq 
    \iint 
        (1 + \lVert y - y' \rVert^2)^{-1}
    \, d{\mu} d{\mu}.
\]
Then, since $\log$ is continuous and increasing
\begin{align*}
    \liminf_{n \to \infty}
    \log\left(
        \iint 
            (1 + \lVert y - y' \rVert^2)^{-1}
        \, d{\mu}_n d{\mu}_n
        -
        \frac{1}{n}
    \right)
    \geq 
    \log\left(
        \iint 
            (1 + \lVert y - y' \rVert^2)^{-1}
        \, d{\mu} d{\mu}  
    \right).
\end{align*}
The $-2\log n$ term in \eqref{eqn:lower-bound-plogp-convergence} cancels with the $2\log n$ term in \eqref{eqn:lower-bound-plogq-convergence}, which completes the proof of the Lemma.
\end{proof}

To summarize, combining Lemmas \ref{minimizer of L_n(X,y) exists} and \ref{coffeemugonMonday}   we have proved the following Proposition:

\begin{prop}\label{prop-liminf-d_n-geq-I(lambda)}
There exists a sub-sequence $n(k)$ such that
\[
	\liminf_{k \to \infty} \inf_{Y \in (\mathbb R^s)^{n(k)}} L_{n(k),\rho}(X, Y) 
	\geq 
	I_{\rho}({\mu})
\]
for some ${\mu} \in \widetilde\calP_X$.
\end{prop}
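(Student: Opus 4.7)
The plan is to assemble the results of this section. Apply Lemma \ref{minimizer of L_n(X,y) exists} to obtain, for each $n$, a minimizer $y^*(X) = (y_1^*, \dots, y_n^*)$ so that
\[
\inf_{Y \in (\R^s)^n} L_{n,\rho}(X,Y) = L_{n,\rho}(X, y^*) = \sum_{i \neq j} p_{ij} \log\frac{p_{ij}}{q_{ij}},
\]
and form the empirical measure $\mu_n \in \calM(\R^d \times \R^s)$ defined in \eqref{defn mu_n}.

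Next, adapt the tightness argument of Lemma \ref{lemma:P_X-is-tight}: the $\R^d$-marginal of $\mu_n$ is the empirical measure of the $X_i$'s, which is tight almost surely since $\mu_X$ has compact support; the $\R^s$-coordinates are controlled by the coercivity of $L_{n,\rho}$ in $y$ established in the proof of Lemma \ref{minimizer of L_n(X,y) exists}. Invoking Prokhorov's theorem yields a subsequence $n(k)$ with $\mu_{n(k)} \rightharpoonup \mu$ for some probability measure $\mu$ on $\R^d \times \R^s$. Testing against bounded continuous functions of $x$ alone and applying the law of large numbers shows the $\R^d$-marginal of $\mu$ equals $\mu_X$, so $\mu \in \calP_X$. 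The strengthening $\mu \in \widetilde\calP_X$ is the content of Proposition \ref{prop:appendix-prop} in the Appendix, which encodes the passage of the first-order optimality condition satisfied by $y^*$ to the limiting variational constraint defining $\widetilde\calP_X$.

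Finally, apply Lemma \ref{coffeemugonMonday} along this subsequence:
\[
\liminf_{k \to \infty} \inf_{Y} L_{n(k),\rho}(X,Y) \geq \iint p(x,x') \log\frac{p(x,x')}{q(y,y')} \, d\mu(x,y) \, d\mu(x',y').
\]
Since $F_{\rho,\mu}$ depends on $\mu$ only through its $\R^d$-marginal and $\mu \in \calP_X$, the function $\sigma(x) = \sigma^*_{\rho, \mu_X}(x)$ used throughout Section \ref{subsec:convergence-lemmas-for-p} agrees with $\sigma^*_{\rho, \mu}(x)$, so the right-hand side is exactly $I_\rho(\mu)$ by \eqref{eq:Ifunctional}. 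The conceptually substantial steps, namely the liminf inequality and the membership $\mu \in \widetilde\calP_X$, are handled by Lemma \ref{coffeemugonMonday} and Proposition \ref{prop:appendix-prop} respectively; the present proposition is essentially a bookkeeping exercise that collects these ingredients together with the existence of minimizers and a compactness argument.
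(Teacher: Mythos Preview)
Your proposal is correct and follows essentially the same route as the paper: obtain the minimizers $y^*$ via Lemma \ref{minimizer of L_n(X,y) exists}, form $\mu_n$ as in \eqref{defn mu_n}, extract a weakly convergent subsequence by the tightness argument of Lemma \ref{lemma:P_X-is-tight}, verify $\mu \in \calP_X$ by testing functions of $x$ alone, upgrade to $\mu \in \widetilde\calP_X$ via Proposition \ref{prop:appendix-prop}, and conclude with Lemma \ref{coffeemugonMonday}. The only cosmetic difference is that you invoke coercivity of $L_{n,\rho}$ in $y$ as part of the tightness step, whereas the paper simply appeals to the $X$-marginal bound from Lemma \ref{lemma:P_X-is-tight}; neither of these is genuinely sharper than the other, and the substantive work lies in the cited lemmas, exactly as you note.
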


\subsection{Upper bound for $\inf_{Y \in (\mathbb R^s)^{n}} L_{n,\rho}(X, Y)$}

From Lemma \ref{lemma:existence-of-mu*-on-P_X}, there exists $\mu^*$ such that $I(\mu^*) = \inf_{\mu} I_{\rho}(\mu)$.
Let $(X_1, Y_1), \dots, (X_n, Y_n)$ be i.i.d. random variables drawn from $\mu^*$ and consider the measures $\mu_n = \frac{1}{n} \sum_{i=1}^n \delta_{(X_i,Y_i)}$.
Then we know that $\mu_n \rightharpoonup \mu^*$.


Define $\sigma, \sigma_n, f, f_n, p$ and $p_n$ as in Section \ref{subsec:convergence-lemmas-for-p} and let
\begin{align*}
    g(y,y')
    & = 
    (1 + \lVert y - y' \rVert^2)^{-1},
    \\
    q_n(y,y')
    & = 
    \frac{
        g(y,y')
    }{
        \iint g(y,y') \, d\mu_n(y) d\mu_n(y') - \frac{1}{n}
    }.
\end{align*}

\begin{lemma}\label{lemma:discrete-entropy-convergence-with-error-term}
As $n \to \infty$, we have
\begin{equation}\label{eqn:pen}
    \sum_{i \neq j} p_{ij}     
        \log\left(
            \frac{p_{ij}}{q_{ij}}
        \right)
    =
    \iint 
        p_n(x,x')
        \log\left(
            \frac{p_n(x,x')}{q_n(y,y')}
        \right)
    \, d\mu_n d\mu_n
    +
    o(1).
\end{equation}
\end{lemma}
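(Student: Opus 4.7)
The plan is to reduce both sides to sums over pairs $(i,j)$ and identify their difference as a purely diagonal correction that must be shown to be $o(1)$. The main algebraic observation is that the definitions align exactly: $p_{ij} = \frac{1}{n^2} p_n(X_i, X_j)$ holds by construction of $p_n'$, and analogously $q_{ij} = \frac{1}{n^2} q_n(Y_i, Y_j)$ holds because $g(y,y) = 1$ gives
\[
    \iint g(y,y')\, d\mu_n(y)\, d\mu_n(y') - \frac{1}{n}
    =
    \frac{1}{n^2}\sum_{k \neq \ell} g(Y_k, Y_\ell),
\]
so that $q_n(Y_i, Y_j) = n^2 q_{ij}$. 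With these identities, the two $\frac{1}{n^2}$ factors in $p_{ij}$ and $q_{ij}$ cancel inside the logarithm, so the LHS equals $\frac{1}{n^2}\sum_{i \neq j} p_n(X_i,X_j) \log(p_n(X_i,X_j)/q_n(Y_i,Y_j))$. The integral on the RHS is the same sum but ranging over all pairs, so the two expressions differ only in the diagonal $i=j$ terms.

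Thus everything reduces to showing
\[
    \frac{1}{n^2}\sum_i p_n(X_i, X_i)\log\frac{p_n(X_i, X_i)}{q_n(Y_i, Y_i)} = o(1).
\]
For the $p_n$ factor, Bound (3) of Section \ref{subsec:technical-bounds} gives that the denominator $\int f_n(X_i, x')\, d\mu_n(x') - 1/n$ is at least $c n^{-2\gamma/\delta^2}$, so $p_n(X_i, X_i) = O(n^{2\gamma/\delta^2})$ and $\log p_n(X_i, X_i) = O(\log n)$. For $q_n(Y_i, Y_i) = (\iint g\, d\mu_n d\mu_n - 1/n)^{-1}$, I would use that $(X_i, Y_i)$ are i.i.d. from $\mu^*$, so $\mu_n \rightharpoonup \mu^*$ almost surely by the SLLN; since $g$ is bounded and continuous, $\iint g\, d\mu_n d\mu_n \to \iint g\, d\mu^* d\mu^*$, and this limit is strictly positive because $g > 0$ everywhere and $\mu^*$ is a probability measure. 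So $q_n(Y_i, Y_i) = O(1)$ and $\log q_n(Y_i, Y_i) = O(1)$ almost surely for large $n$. Summing, each diagonal term is $O(n^{2\gamma/\delta^2}\log n)$, and the $\frac{1}{n^2}$ prefactor yields a total of $O(n^{2\gamma/\delta^2 - 1}\log n) = o(1)$ under the standing assumption $c_1\delta^2 > 40$ (since $\gamma = 2/c_1 + \epsilon$).

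The main obstacle is the uniform lower bound on $\iint g\, d\mu_n d\mu_n - 1/n$; without it, $\log q_n(Y_i, Y_i)$ could in principle grow, and the diagonal correction would fail to vanish. This is the one place in the argument that genuinely uses the weak convergence $\mu_n \rightharpoonup \mu^*$ (via SLLN for the empirical measure of the i.i.d. samples from $\mu^*$) rather than an earlier concentration bound, but the positivity of $g$ and of $\iint g\, d\mu^* d\mu^*$ is enough and does not require $\mu^*$ to be compactly supported. Note that, unlike in Lemma \ref{lemma:lower-bound-plogp-integral-convergence}, no $\pm 2\log n$ bookkeeping is needed here because the two $n^{-2}$ scalings cancel inside $\log(p_{ij}/q_{ij})$.
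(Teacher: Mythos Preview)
Your proposal is correct and follows essentially the same approach as the paper: both identify $p_{ij}=\tfrac{1}{n^2}p_n(X_i,X_j)$ and $q_{ij}=\tfrac{1}{n^2}q_n(Y_i,Y_j)$, reduce the difference between the two sides to the diagonal contribution, and bound that diagonal using Bound~(\ref{a.s.-lower-bound-int-f}) for the $p_n$ factor together with a positive lower bound on $\iint g\,d\mu_n d\mu_n - \tfrac{1}{n}$ for the $q_n$ factor. The only cosmetic difference is that the paper invokes Fatou to obtain $\liminf_n \iint g\,d\mu_n d\mu_n \geq \iint g\,d\mu^* d\mu^* > 0$, whereas you appeal directly to weak convergence of $\mu_n\otimes\mu_n$; both yield the needed eventual lower bound.
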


\begin{proof}
We have 
\begin{align*}
    \sum_{i \neq j} p_{ij}     
        \log\left(
            \frac{p_{ij}}{q_{ij}}
        \right)
	 & = 
	 \frac{1}{n^2}
	 \sum_{i \neq j}
	     p_n(X_i,X_j) \log\left(\frac{p_n(X_i,X_j)}{q_n(X_i,X_j)}\right)
    \\
    & = 
    \iint 
        p_n(x,x')
        \log\left(
            \frac{p_n(x,x')}{q_n(y,y')}
        \right)
    \, d\mu_n d\mu_n
    +
    \mathrm{Err}_n,
    \\
    \mathrm{Err}_n
    & := 
	 -
	 \frac{1}{n^2}
	 \sum_{i=1}^n
	     \frac{
            1
        }{
            \frac{1}{n}
            \sum_{k \neq i} f_n(X_i, X_k)
        }
        \left[
            \log\left(
                \frac{
                    1
                }{
                    \frac{1}{n}
                    \sum_{k \neq i} f_n(X_i, X_k)
                }        
            \right)
            -
            \log\left(
                \frac{
                    1
                }{
                    \frac{1}{n^2}
                    \sum_{k \neq l} g(Y_k, Y_l)
                }        
            \right)
        \right].
\end{align*}

Now let us show that $\mathrm{Err}_n = o(1)$.
Since $f(x,x') \in [0,1]$ we have
\(
    \frac{1}{n}
    \sum_{k \neq i}
        f(X_i, X_k)
    \in [0,1].
\)
Moreover, $x \log x$ is bounded on $[0,1]$, so
\[
    \mathrm{Err}_n
    =
    O(n^{-1})
    +
    \frac{1}{n^2}
    \sum_{i=1}^n
        \frac{
        1
        }{
            \frac{1}{n}
            \sum_{k \neq i} f_n(X_i, X_k)
        }
        \log\left(
            \frac{
                1
            }{
                \frac{1}{n^2}
                \sum_{k \neq l} g(Y_k, Y_l)
            }
        \right).            
\]
Now, using Bound (\ref{a.s.-lower-bound-int-f}) of Section \ref{subsec:technical-bounds}, we have 
\[
    \frac{1}{n^2}
    \sum_{i=1}^n
    \frac{
        1
    }{
        \frac{1}{n}
        \sum_{k \neq i}
            f_n(X_i, X_k)        
    }
    =
    O(n^{4/c_1\delta^2 - 1})
\]
which is $o(1)$ for $c_1$ sufficiently large.
Moreover, by Fatou
\[
    \liminf_{n \to \infty}
    \frac{1}{n^2}
    \sum_{k \neq l}
        g(Y_k, Y_l)
    =
    \liminf_{n \to \infty}
    \iint g(y,y') \, d\mu_n d\mu_n
    -
    \frac{1}{n}
    \geq 
    \iint g(y,y') \, d\mu^* d\mu^*
    > 0.
\]
The Lemma follows.
\end{proof}

We can apply same proof as Lemma \ref{lemma:lower-bound-plogp-integral-convergence} to show convergence of the $\iint p_n \log p_n \, d\mu_n d\mu_n$ term in \eqref{eqn:pen}.
The main difference between the proof of the upper bound and the proof of the lower bound comes in dealing with the $\iint p_n \log q_n \, d\mu_n d\mu_n$ term.
Up to a factor of $\log n$ (which cancels with an analogous factor in the $p_n \log p_n$ term), we have
\[
	-
	\iint p_n \log q_n \, d\mu_n d\mu_n
	=
	\iint 
		p_n \log(1 + \lVert y - y' \rVert^2)
	\, d\mu_n d\mu_n
	+
	\log\left(
		\iint (1 + \lVert y - y' \rVert^2)^{-1} \, d\mu_n d\mu_n
		-
		\frac{1}{n}
	\right).
\]

\begin{lemma}\label{lemma:Z_q_n-convergence}
As $n \to \infty$, we have
\[
    \iint (1 + \lVert y - y' \rVert^2)^{-1} \, d\mu_n(y) d\mu_n(y') 
    -
    \frac{1}{n}
    =
    \iint (1 + \lVert y - y' \rVert^2)^{-1} \, d\mu^* d\mu^*
    +
    o(1).
\]
\end{lemma}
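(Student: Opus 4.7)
The plan is as follows. The integrand $g(y,y') = (1+\lVert y-y' \rVert^2)^{-1}$ is bounded above by $1$ and jointly continuous on $\mathbb{R}^s \times \mathbb{R}^s$; lifted to $(\mathbb{R}^d \times \mathbb{R}^s)^2$ as a function depending only on the $y$-coordinates, it becomes a bounded continuous function on the space where $\mu_n \otimes \mu_n$ lives. Because the integrand is bounded, none of the technical estimates used elsewhere in this section for unbounded quantities (e.g.\ those involving $p_n \log p_n$) are needed here.

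First I would invoke the setup from the beginning of Section 4.3: since $(X_i, Y_i)$ are i.i.d.\ draws from $\mu^*$, the empirical measures $\mu_n = \frac{1}{n}\sum \delta_{(X_i, Y_i)}$ converge weakly to $\mu^*$ almost surely (Varadarajan's theorem, or equivalently the strong law of large numbers applied to integrals of bounded continuous test functions). Weak convergence is preserved under taking products, so $\mu_n \otimes \mu_n \rightharpoonup \mu^* \otimes \mu^*$ almost surely.

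Next, applying the Portmanteau characterization of weak convergence to the bounded continuous function $(x,y,x',y') \mapsto g(y,y')$ gives
\[
\iint g(y,y') \, d\mu_n(x,y) d\mu_n(x',y') \longrightarrow \iint g(y,y') \, d\mu^*(x,y) d\mu^*(x',y').
\]
Since $\frac{1}{n} = o(1)$, subtracting it on the left does not affect the limit, yielding the statement of the Lemma.

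I do not anticipate any substantive obstacle. The boundedness of $g$ makes the argument a direct consequence of weak convergence, entirely avoiding the delicate issues (lower bounds on denominators, sub-Gaussian tail estimates, unbounded integrands) which appeared in Lemma \ref{lemma:discrete-entropy-convergence-with-error-term} and the convergence analysis for $p_n$.
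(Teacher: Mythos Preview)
Your argument is correct. The integrand $g(y,y')=(1+\lVert y-y'\rVert^2)^{-1}$, regarded as a function on $(\R^d\times\R^s)^2$, is bounded and continuous, and the fact that $\mu_n\rightharpoonup\mu^*$ implies $\mu_n\otimes\mu_n\rightharpoonup\mu^*\otimes\mu^*$ on a Polish space is standard (it follows from tightness of the product sequence together with convergence on the separating class $\{f\otimes g\}$; see e.g.\ Billingsley, \emph{Convergence of Probability Measures}, Theorem~2.8). Portmanteau then gives the result immediately, and the $1/n$ term is harmless.

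This is genuinely shorter than what the paper does. The paper does not invoke product weak convergence as a black box; instead it argues by hand: truncate to a compact set $K$ using tightness, set $G_n(y)=\int g(y,y')\1_K\,d\mu_n(y')$ and $G(y)=\int g(y,y')\1_K\,d\mu^*(y')$, prove that $\{G_n\}$ is uniformly equicontinuous on $K$, upgrade pointwise convergence $G_n(y)\to G(y)$ to uniform convergence on $K$, and then peel off one integral at a time. In effect the paper is reproving, for this particular integrand, the special case of product weak convergence that you cite directly. Your route is cleaner and loses nothing; the paper's explicit equicontinuity estimate is reusable as a template (it is mimicked later, e.g.\ in the appendix), but your one-line argument would serve equally well in those places since the relevant integrands there are also bounded and continuous.
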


\begin{proof}
Since $\mu_n \rightharpoonup \mu^*$, the sequence $\{\mu_n\}$ is tight: for all $\varepsilon > 0$, there exists compact $K$ such that for all $n$, 
\(
    \mu^*(K), \mu_n(K) \leq \varepsilon.
\)
Hence, since $(1 + \lVert y - y' \rVert^2)^{-1}$ is bounded, there exists a constant $C > 0$ such that
\begin{align*}
    & \left\vert
        \iint (1 + \lVert y - y' \rVert^2)^{-1} \, d\mu_n d\mu_n
        -
        \iint (1 + \lVert y - y' \rVert^2)^{-1} \, d\mu^* d\mu^* 
    \right\vert 
    \\
    \leq
    & \left\vert 
        \iint 
           (1 + \lVert y - y' \rVert^2)^{-1} \1_{K \times K} 
        \, (d\mu_n d\mu_n - d\mu^* d\mu^*)
    \right\vert
    +
    C \varepsilon.
\end{align*}

Define
\begin{align*}
    G_n(y) = \int (1 + \lVert y - y' \rVert^2)^{-1} \1_K \, d\mu_n(y'), 
    \quad
    G(y) = \int (1 + \lVert y - y' \rVert^2)^{-1} \1_K \, d\mu^*(y').
\end{align*}

\begin{claim}
$G_n(y)$ is uniformly equicontinuous on $K$.
\end{claim}

\begin{proof}[Proof of Claim]
We have
\[
    \left\vert
        G_n(y) - G_n(\tilde y)
    \right\vert
    =
    \int 
        \frac{
            \left\Vert y - y' \right\Vert^2
            -
            \left\Vert \tilde y - y' \right\Vert^2
        }{
            (1 + \left\Vert y - y' \right\Vert^2)
            (1 + \left\Vert \tilde y - y' \right\Vert^2)
        }
        \1_K
    \, d\mu_n(y').
\]
Now, since $K$ is compact, for $y, \tilde y \in K$, we have
\begin{align*}
    \left\vert
        \lVert y - y' \rVert^2
        -
        \lVert \tilde y - y' \rVert^2
    \right\vert
    =
    \left\vert 
        \lVert y - \tilde y \rVert^2
        -
        2\langle
            y - \tilde y, y' - \tilde y
        \rangle
    \right\vert
    =
    O(\lVert y - \tilde y \rVert).
\end{align*}
Hence, uniformly in $n$, 
\(
    \left\vert
        G_n(y) - G_n(\tilde y)
    \right\vert
    =
    O(\lVert y - \tilde y \rVert).
\)
\end{proof}

\begin{claim} \label{Danishavingcoffee}We have
    $\sup_{y \in K} 
    \left\vert G_n(y) - G(y) \right\vert
    =
    o(1).$
\end{claim}

\begin{proof}[Proof of Claim]
Given $\varepsilon > 0$ we will show that for each $y \in K$ there exists $\delta > 0$ such that
\[
    \sup_{\tilde y \in B(y,\delta)} 
    \left\vert G_n(\tilde y) - G(\tilde y) \right\vert
    \leq
    \varepsilon.    
\]
Since $K$ is compact, this implies the claim.

We have
\[
    \left\vert 
        G_n(\tilde y) - G(\tilde y) 
    \right\vert
    \leq
    \left\vert 
        G_n(\tilde y) - G_n(y)
    \right\vert
    +
    \left\vert
        G_n(y) - G(y)
    \right\vert
    +
    \left\vert 
        G(y) - G(\tilde y)
    \right\vert.
\]
By the uniform equicontinuity of $G_n$, there exists $\delta$ such that, on $B(y, \delta) >0$ the first term is bounded by $\varepsilon$  uniformly in $n$.
For $n$ sufficiently large, the second term is bounded by $\varepsilon$ since $g$ is bounded and continuous and $\mu_n \rightharpoonup \mu^*$.
Finally the last term is bounded on a ball $B(y, \delta')$ by continuity of $G$ (same proof as for equicontinuity of $G_n$).
\end{proof}

By the Claim \ref{Danishavingcoffee} we have
\[
    \iint g(y,y') \1_{K \times K} \, d\mu_n d\mu_n
    =
    \iint g(y,y') \1_{K \times K} \, d\mu^{*} d\mu_n
    +
    o(1)
    =
    \int G(y) \1_K \, d\mu_n(y) + o(1).
\]
Since $G$ is bounded and continuous, by weak convergence of $\mu_n$, we have
\[
    \int G(y) \1_K \, d\mu_n(y) 
    =
    \int G(y) \1_K \, d\mu^*(y) + o(1)
    =
    \iint g \1_{K \times K} \, d\mu^* d\mu^* + o(1).
\]
\end{proof}

\begin{lemma}\label{lemma:upper-bound-plogq-convergence}
If $\mu^*$ has compact support in $Y$, then
\[
	\iint 
		p_n(x,x') \log(1 + \lVert y - y' \rVert^2)
	\, d\mu_n d\mu_n
	=
	\iint 
		p(x,x') \log(1 + \lVert y - y' \rVert^2)
	\, d\mu^* d\mu^*
	+
	o(1).
\]
\end{lemma}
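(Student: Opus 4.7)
The plan is to split the difference into two pieces and to treat each by a separate tool: a ``swap-integrand'' term handled by the uniform sample-wise estimate of Lemma \ref{lemma:lower-bound-p-comparison}, and a ``swap-measure'' term handled by weak convergence against a bounded continuous integrand. Concretely, I would write
\begin{align*}
    &\iint p_n(x,x') \log(1 + \lVert y-y'\rVert^2)\, d\mu_n d\mu_n
    - \iint p(x,x') \log(1 + \lVert y-y'\rVert^2)\, d\mu^* d\mu^* \\
    &\qquad = A_n + B_n,
\end{align*}
with $A_n = \iint (p_n - p)(x,x') \log(1+\lVert y-y'\rVert^2)\, d\mu_n d\mu_n$ and $B_n = \iint p(x,x')\log(1+\lVert y-y'\rVert^2)\, (d\mu_n d\mu_n - d\mu^* d\mu^*)$. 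Since $\mu_X$ has compact support and $\mu^*$ has compact support in $Y$, the joint support $K \subset \R^d \times \R^s$ of $\mu^*$ is compact; as the $(X_i,Y_i)$ are i.i.d.\ from $\mu^*$, almost surely every $(X_i,Y_i) \in K$, so $\mu_n$ is supported in $K$ as well.

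For $A_n$, I would set $R := \sup\{\lVert y - y'\rVert : (x,y),(x',y') \in K\} < \infty$, which gives $\log(1 + \lVert Y_i - Y_j\rVert^2) \leq \log(1+R^2)$ uniformly in $i,j$. Expanding $\mu_n$ as a sum of point masses then yields
\[
    |A_n| \leq \log(1+R^2) \cdot \sup_{i,j} |p_n(X_i,X_j) - p(X_i,X_j)| = O(n^{-\epsilon})
\]
by Lemma \ref{lemma:lower-bound-p-comparison}, so $A_n = o(1)$ almost surely.

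For $B_n$ I would observe that the integrand $\Phi(x,y,x',y') := p(x,x') \log(1 + \lVert y-y'\rVert^2)$ is continuous on $\R^d \times \R^s \times \R^d \times \R^s$: the map $x \mapsto \sigma^*_{\rho,\mu_X}(x)$ is smooth by Proposition \ref{prop:uniqueness_of_sigma}, the denominator in the definition of $p$ is continuous and uniformly bounded below on the compact support of $\mu_X$ (since $\sigma^*$ is bounded above there and the exponents in the Gaussians stay bounded), and $\log(1 + \lVert \cdot \rVert^2)$ is continuous. Hence $\Phi$ is bounded on the compact set $K \times K$. Multiplying $\Phi$ by a continuous cut-off $\chi$ equal to $1$ on $K$ and of compact support produces a bounded continuous function on all of $\R^d \times \R^s \times \R^d \times \R^s$ whose integrals against $\mu_n \otimes \mu_n$ and $\mu^* \otimes \mu^*$ agree with those of $\Phi$, since both measures are supported in $K$. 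Weak convergence $\mu_n \rightharpoonup \mu^*$ implies $\mu_n \otimes \mu_n \rightharpoonup \mu^* \otimes \mu^*$, giving $B_n = o(1)$ and thereby completing the proof.

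The main technical subtlety, and the reason the compact-support hypothesis on $\mu^*$ in $Y$ is invoked, is that $\log(1 + \lVert y-y'\rVert^2)$ is unbounded at infinity, so the Portmanteau theorem cannot be applied to $\Phi$ directly. Compactness of the joint support of $\mu^*$ is precisely what enables the cut-off argument used for $B_n$; without it, one would need to control tail contributions in $Y$, which is not available from weak convergence alone.
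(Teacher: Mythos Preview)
Your argument is correct and uses the same two-term decomposition as the paper: first swap $p_n$ for $p$ using the uniform bound of Lemma~\ref{lemma:lower-bound-p-comparison} together with the boundedness of $\log(1+\lVert y-y'\rVert^2)$ on the compact $Y$-support, then pass from $\mu_n$ to $\mu^*$.

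The difference lies in the treatment of your $B_n$. The paper does not invoke Portmanteau; instead it points back to Lemma~\ref{lemma:lower-bound-plogp-integral-convergence}, which in turn relies on the symmetrization/Rademacher and McDiarmid machinery of Lemma~\ref{lemma:general-uniform-convergence} (and Lemma~\ref{lemma:convergence-of-log-f}) to show $\iint p\log(1+\lVert y-y'\rVert^2)\,d\mu_n d\mu_n \to \iint p\log(1+\lVert y-y'\rVert^2)\,d\mu^* d\mu^*$. Your route is more elementary and self-contained: once you observe that almost surely every sample lies in the compact support $K$ of $\mu^*$, you can multiply the continuous integrand $\Phi$ by a compactly supported cutoff equal to $1$ on $K$ and apply weak convergence of $\mu_n\otimes\mu_n$ to $\mu^*\otimes\mu^*$ directly. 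The paper's approach recycles the concentration tools already developed in Section~\ref{sec3} and would yield a quantitative $O(n^{-\epsilon})$ rate, whereas your argument gives only the qualitative $o(1)$---which is all the lemma asserts.
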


\begin{proof}
By Lemma \ref{lemma:lower-bound-plogp-convergence}, $p_n(x,x') = p(x,x') + o(1)$.
Since $\mu^*$ has compact support in $Y$, it follows that
\[
	\iint 
		p_n(x,x') \log(1 + \lVert y - y' \rVert^2)
	\, d\mu_n d\mu_n
	=
	\iint 
		p(x,x') \log(1 + \lVert y - y' \rVert^2)
	\, d\mu_n d\mu_n
	+
	o(1).
\]
Then, following the proof sketched in Lemma \ref{lemma:lower-bound-plogp-integral-convergence}, one can show that
\[
	\iint 
		p(x,x') \log(1 + \lVert y - y' \rVert^2)
	\, d\mu_n d\mu_n
	=
	\iint 
		p(x,x') \log(1 + \lVert y - y' \rVert^2)
	\, d\mu^* d\mu^*
	+
	o(1).
\]
\end{proof}

\begin{lemma}
If $\mu^*$ has compact support in $Y$ then 
\[
    \sum_{i \neq j} p_{ij}     
        \log\left(
            \frac{p_{ij}}{q_{ij}}
        \right)
	 =	
	 \iint 
	     p(x,x') \log\left(\frac{p(x,x')}{q(y,y')}\right)
	 \, d\mu^* d\mu^*
	 +
	 o(1).
\]
\end{lemma}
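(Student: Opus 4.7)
The plan is to reduce the discrete sum to an integral against $\mu_n \otimes \mu_n$ via Lemma \ref{lemma:discrete-entropy-convergence-with-error-term}, and then pass to the limit term-by-term using the convergence lemmas established above together with the weak convergence $\mu_n \rightharpoonup \mu^*$. Recalling that $q_n(y,y') = g(y,y') / Z_n$ with $Z_n := \iint g \, d\mu_n d\mu_n - 1/n$ and $g(y,y') = (1 + \lVert y - y' \rVert^2)^{-1}$, we have the algebraic decomposition
\[
    \iint p_n \log(p_n / q_n) \, d\mu_n d\mu_n
    = \iint p_n \log p_n \, d\mu_n d\mu_n
    + \iint p_n \log(1 + \lVert y - y' \rVert^2) \, d\mu_n d\mu_n
    + \log(Z_n) \iint p_n \, d\mu_n d\mu_n,
\]
so by Lemma \ref{lemma:discrete-entropy-convergence-with-error-term} it suffices to control each of the four pieces on the right.

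First I would handle the $\iint p_n \log p_n$ term: by mimicking the strategy of Lemma \ref{lemma:lower-bound-plogp-integral-convergence}, Lemma \ref{lemma:lower-bound-plogp-convergence} gives the uniform estimate $p_n \log p_n = p \log p + o(1)$ on the sampled points, which replaces $p_n \log p_n$ by $p \log p$ inside the double integral against $\mu_n \otimes \mu_n$ at the cost of an $o(1)$ error; then since $\mu^*$ has compact support (in $X$ from the hypothesis on $\mu_X$, in $Y$ by the assumption of the lemma), $p \log p$ is bounded and continuous on $\operatorname{supp}(\mu^*) \times \operatorname{supp}(\mu^*)$, so weak convergence $\mu_n \rightharpoonup \mu^*$ yields $\iint p \log p \, d\mu_n d\mu_n \to \iint p \log p \, d\mu^* d\mu^*$. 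Next, the $\iint p_n \log(1 + \lVert y - y'\rVert^2)$ term converges to $\iint p \log(1+\lVert y - y'\rVert^2)\, d\mu^* d\mu^*$ directly by Lemma \ref{lemma:upper-bound-plogq-convergence}, and Lemma \ref{lemma:Z_q_n-convergence} gives $Z_n \to \iint g \, d\mu^* d\mu^* > 0$, so that $\log Z_n$ converges as well.

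The final piece is $\iint p_n \, d\mu_n d\mu_n \to 1$: recalling $p_{j|i} = \tfrac{1}{n} p_n'(X_i, X_j)$ and $\sum_{j \neq i} p_{j|i} = 1$, we have $\int p_n'(X_i, x') \, d\mu_n(x') = 1 + \tfrac{1}{n} p_n'(X_i, X_i)$; using Bound (\ref{a.s.-lower-bound-int-f}) the diagonal term is $o(1)$ uniformly in $i$, so $\iint p_n \, d\mu_n d\mu_n = 1 + o(1)$ by symmetrization. Assembling all four convergences yields
\[
    \iint p_n \log(p_n/q_n) \, d\mu_n d\mu_n
    \to
    \iint p \log p \, d\mu^* d\mu^* + \iint p \log(1+\lVert y - y'\rVert^2) \, d\mu^* d\mu^* + \log \iint g \, d\mu^* d\mu^*,
\]
and since $\log q(y,y') = -\log(1+\lVert y - y'\rVert^2) - \log \iint g \, d\mu^* d\mu^*$, the right-hand side is precisely $\iint p(x,x') \log(p(x,x')/q(y,y')) \, d\mu^* d\mu^*$, which combined with Lemma \ref{lemma:discrete-entropy-convergence-with-error-term} completes the proof.

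The main obstacle is step one: passing $\mu_n \rightharpoonup \mu^*$ through the $p_n \log p_n$ integrand. The integrand depends on $\mu_n$ (through $\sigma_n$ and the normalization in $p_n'$) as well as the integration variables, and $p \log p$ is only well-behaved because the compact support hypotheses give a uniform positive lower bound on $p$ on the relevant region. Once the uniform closeness of $p_n$ to $p$ from Lemma \ref{lemma:lower-bound-plogp-convergence} is in hand, the remaining weak-convergence step is routine, but this decoupling is what makes the compact-support-in-$Y$ hypothesis essential.
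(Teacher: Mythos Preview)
Your proof is correct and follows essentially the same route as the paper: invoke Lemma~\ref{lemma:discrete-entropy-convergence-with-error-term} to pass to the double integral against $\mu_n\otimes\mu_n$, then split into the $p_n\log p_n$ piece (handled as in Lemma~\ref{lemma:lower-bound-plogp-integral-convergence}), the $p_n\log(1+\lVert y-y'\rVert^2)$ piece (Lemma~\ref{lemma:upper-bound-plogq-convergence}), and the normalizer (Lemma~\ref{lemma:Z_q_n-convergence}). Your explicit verification that $\iint p_n\,d\mu_n d\mu_n = 1 + o(1)$ is a detail the paper silently absorbs, so your write-up is in fact slightly more complete there.

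One small misattribution in your closing commentary: the compact-support-in-$Y$ hypothesis is not what makes the $p\log p$ step work (that integrand depends only on $x,x'$, and $\mu_X$ already has compact support); it is instead what controls the $p_n\log(1+\lVert y-y'\rVert^2)$ term in Lemma~\ref{lemma:upper-bound-plogq-convergence}, where unbounded $y$'s would prevent the uniform replacement of $p_n$ by $p$ and the subsequent weak-convergence argument.
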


\begin{proof}
This follows as a combination of Lemma \ref{lemma:lower-bound-plogp-integral-convergence}, Lemma \ref{lemma:discrete-entropy-convergence-with-error-term}, Lemma \ref{lemma:Z_q_n-convergence} and Lemma \ref{lemma:upper-bound-plogq-convergence}.
\end{proof}

Thus it follows from the Lemmas above that
\begin{prop}\label{prop:limsup_dn(X)_upper_bound} 
If $\mu^*$ has compact support in $Y$ then
\[
	\limsup_{n \to \infty} \inf_{Y \in (\mathbb R^s)^{n}} L_{n,\rho}(X, Y) 
	\leq 
	I_{\rho}(\mu^*).
\]
\end{prop}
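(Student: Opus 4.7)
The plan is to exploit the specific construction built up in the preceding lemmas: drawing an i.i.d.\ sample $(X_i, Y_i) \sim \mu^*$ and using the coordinates $(Y_1, \dots, Y_n)$ as a particular test point in the infimum. Concretely, since $\mu^* \in \widetilde{\calP}_X$, its $\R^d$-marginal coincides with $\mu_X$, so the $X_i$'s obtained this way are i.i.d.\ from $\mu_X$ and agree in distribution with the input data of Theorem \ref{thm:precise_theorem}. Setting $\mathbf{Y} = (Y_1, \dots, Y_n)$, the defining property of the infimum gives, almost surely,
\[
    \inf_{Y \in (\R^s)^n} L_{n,\rho}(X, Y) \leq L_{n,\rho}(X, \mathbf{Y}).
\]

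Next I would invoke the immediately preceding lemma, which states that under the compact-support-in-$Y$ assumption on $\mu^*$,
\[
    \sum_{i \neq j} p_{ij} \log\left( \frac{p_{ij}}{q_{ij}} \right)
    =
    \iint p(x,x')\log\left(\frac{p(x,x')}{q(y,y')}\right) d\mu^* d\mu^* + o(1)
    = I_\rho(\mu^*) + o(1),
\]
where the $q_{ij}$ on the left are the $t$-distribution weights computed from the chosen $\mathbf{Y}$. Taking $\limsup_{n \to \infty}$ on both sides of the infimum inequality then yields the desired bound. This compact-support hypothesis on the $Y$-marginal of $\mu^*$ is exactly what was used in Lemma \ref{lemma:upper-bound-plogq-convergence} to pass $\log(1 + \lVert y - y'\rVert^2)$ through the weak convergence $\mu_n \rightharpoonup \mu^*$, and it is what licenses the chain of lemmas \ref{lemma:discrete-entropy-convergence-with-error-term}--\ref{lemma:upper-bound-plogq-convergence} to combine into the final convergence statement.

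The only subtlety worth flagging, rather than a true obstacle, is that the almost-sure statement is with respect to the joint randomness of $(X_i,Y_i)$ drawn from $\mu^*$, whereas Theorem \ref{thm:precise_theorem} fixes the $X_i$'s i.i.d.\ from $\mu_X$. This is reconciled by conditioning on $X_1, \dots, X_n$ and then drawing $Y_i$ from the regular conditional distribution of $\mu^*$ given $X$; since the marginal on $X$ is $\mu_X$, the unconditional law of $(X_i)$ matches the Theorem's setup, so the a.s.\ convergence on the product space transfers to an a.s.\ statement for almost every realization of $(X_i)$. The genuine analytic work has already been absorbed into the preceding lemmas, so the proof itself collapses to combining the inequality above with the almost-sure limit and taking $\limsup$.
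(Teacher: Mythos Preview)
Your proposal is correct and matches the paper's approach exactly: the paper gives no explicit proof beyond pointing to the preceding lemmas, and you have correctly identified the test-point inequality $\inf_Y L_{n,\rho}(X,Y) \le L_{n,\rho}(X,\mathbf{Y})$ combined with the almost-sure convergence of $L_{n,\rho}(X,\mathbf{Y})$ to $I_\rho(\mu^*)$ as the entire argument. Your handling of the randomness subtlety is also right; the cleanest phrasing is simply that the event $\{\limsup_n \inf_Y L_{n,\rho}(X,Y) \le I_\rho(\mu^*)\}$ is measurable in the $X$-coordinates alone, and the $X$-marginal of $(\mu^*)^{\otimes\infty}$ is $(\mu_X)^{\otimes\infty}$.
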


\subsection{Proof of Theorem \ref{thm:precise_theorem}}

By Theorem \ref{thm2} (whose proof is independent of Theorem \ref{thm:precise_theorem}), when $\mu_X$ has compact support, $\mu^*$ has compact support in $Y$.
In particular, when $\mu_X$ has compact support, Proposition \ref{prop:limsup_dn(X)_upper_bound} applies.

Let us write
\[
	d_{n, \rho}(X) = \inf_{Y \in (\mathbb R^s)^{n}} L_{n,\rho}(X,Y).
\]
By Proposition \ref{prop-liminf-d_n-geq-I(lambda)}, Proposition \ref{prop:limsup_dn(X)_upper_bound} and the fact that $\mu_* = \argmin_{\mu \in \widetilde\calP_X} I_{\rho}(\mu)$, there exists $\mu \in \widetilde\calP_X$ and a subsequence $n(k)$ such that
\[
    I_{\rho}(\mu_*) 
    \geq 
    \limsup_{n \to \infty} d_{n, \rho}(X)
    \geq
    \liminf_{k \to \infty}d_{n(k), \rho}(X)
    \geq 
    I_{\rho}({\mu})
    \geq 
    I_{\rho}(\mu_*).
\]
Hence $\lim_{k \to \infty} d_{n(k), \rho}(X) = I_{\rho}(\mu) = I_{\rho}(\mu_*)$ with $\frac{1}{n}\sum_i \delta_{(X_i, y^*_i)}$ converging weakly to $\mu$.

To see that $\lim_{n \to \infty} d_{n, \rho}(X) = I(\mu_*)$, let $n(m)$ be any sequence such that $d_{n(m), \rho}(X)$ converges.
If $d_{n(m), \rho}(X) \to I'$ then by the same reasoning as above, we can find a sub-sequence $n(m_k)$ such that $d_{n(m_k), \rho}(X) \to I_{\rho}(\mu_*)$ and hence $I' = I_{\rho}(\mu_*)$.

\section{Proof of Theorem \ref{thm2}}

Recall that 
\[
    \widetilde{\calP}_X
    =
    \left\{
        \mu \in \calP_X: 
        \int 
            (p(x,x') - q_{}(y,y'))
            \frac{y - y'}{1 + \lVert y - y' \rVert^2}
        \, d\mu(x',y') 
        = 0, 
        \, \mu(x,y)\text{-a.e.}
    \right\}.
\]



\begin{lemma}\label{lemma:plog(1+(y-y')^2)-decomposition}
For every $\mu \in \widetilde{\calP}_X$, there exists a function $h_{\mu}(x)$ that does depend on $y$ such that
\[
    \int 
        p(x,x') \log(1 + \lVert y - y' \rVert^2)
    \, d\mu 
    =
    h_{\mu}(x)
    -
    \frac{
        \int 
            (1 + \lVert y - y' \rVert^2)^{-1} 
        \, d\mu
    }{
        \iint 
            (1 + \lVert y - y' \rVert^2)^{-1} 
        \, d\mu d\mu
    } .
\]
Moreover, if $\mu_X$ has compact support, then $h_{\mu}$ is continuous on $\mu_X$.
\end{lemma}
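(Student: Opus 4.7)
The plan hinges on recognising the constraint defining $\widetilde{\calP}_X$ as a gradient identity in $y$. Compute
\[
    \nabla_y \log(1+\lVert y-y'\rVert^2) = \frac{2(y-y')}{1+\lVert y-y'\rVert^2}, \qquad
    \nabla_y (1+\lVert y-y'\rVert^2)^{-1} = -\frac{2(y-y')}{(1+\lVert y-y'\rVert^2)^2},
\]
and set
\[
    \tilde h(x,y) := \int p(x,x')\log(1+\lVert y-y'\rVert^2)\,d\mu(x',y') + \frac{\int(1+\lVert y-y'\rVert^2)^{-1}\,d\mu(x',y')}{\iint (1+\lVert y-y'\rVert^2)^{-1}\,d\mu\,d\mu}.
\]
Differentiating each integral in $y$ under the integral sign (justified by compactness of $\supp \mu_X$ and the uniform lower bound on $\sigma^*$ from Lemma \ref{lemma:sigma_uniform_lower_bound}, both of which give boundedness of $p(x,x')$) and recognising the second gradient as the integral of $-2q(y,y')(y-y')/(1+\lVert y-y'\rVert^2)$, the defining constraint of $\widetilde{\calP}_X$ becomes $\nabla_y \tilde h(x,y) = 0$ for $\mu$-a.e. $(x,y)$. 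The decomposition in the Lemma is precisely the assertion that $\tilde h(x,\cdot)$ is constant in $y$ on $\supp \mu$; taking $h_\mu(x)$ to be that constant gives the identity.

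To upgrade the $\mu$-a.e. vanishing of $\nabla_y \tilde h$ to literal constancy in $y$, I would invoke real-analyticity. Both $\log(1+\lVert y-y'\rVert^2)$ and $(1+\lVert y-y'\rVert^2)^{-1}$ are real-analytic in $y$, and a dominated-convergence argument (using boundedness of $p$ and of $(1+\lVert y-y'\rVert^2)^{-1}$, plus logarithmic moment bounds on $\mu$) shows $y \mapsto \tilde h(x,y)$ is itself real-analytic for each $x \in \supp \mu_X$. A real-analytic function on $\R^s$ whose gradient vanishes on a set containing an accumulation point is globally constant, so whenever the fibre $\mu_{y|x}$ is non-degenerate (the generic case, $\mu_X$-a.e.), $\tilde h(x,y)$ is independent of $y$, and one unambiguously sets $h_\mu(x) := \tilde h(x,y_0)$ for any fixed $y_0 \in \R^s$. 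For degenerate fibres (point masses) the identity is required only at the single atom, so the ad hoc definition $h_\mu(x) := \tilde h(x,y_0(x))$ is compatible.

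Continuity of $h_\mu$ on $\supp \mu_X$ then reduces, via the representation $h_\mu(x) = \tilde h(x,y_0)$ at a single fixed $y_0$, to continuity of $x \mapsto \int p(x,x')\log(1+\lVert y_0-y'\rVert^2)\,d\mu(x',y')$. By Proposition \ref{prop:uniqueness_of_sigma}, $\sigma^*_{\rho,\mu_X}$ is smooth, so the kernel $p(x,x')$ is continuous in $x$, uniformly in $x'$, over the compact set $\supp \mu_X$, and a standard dominated-convergence argument supplies the continuity.

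The main obstacle is the analyticity argument bridging from $\mu$-a.e. vanishing of $\nabla_y \tilde h$ to constancy in $y$, in particular justifying the term-by-term differentiation of the integrals defining $\tilde h$ in a complex neighbourhood of $\R^s$. If that step proves too delicate, an alternative is to use only that $\tilde h(x,\cdot)$ is $C^\infty$ in $y$, note that its gradient vanishes on the closed set $\supp \mu_{y|x}$, and combine local constancy with a path-connectedness analysis of the components of $\supp \mu_{y|x}$ to reach the same conclusion.
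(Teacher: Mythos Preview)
Your approach---recognising the $\widetilde{\calP}_X$ constraint as $\nabla_y \tilde h(x,y)=0$ and then reading off $y$-independence---is exactly the paper's. The paper performs the same gradient computation and then simply asserts ``It follows that there exists $h_\mu:\R^d\to\R$ such that\ldots'' without further comment; your continuity argument via uniform continuity of $p$ on the compact support of $\mu_X$ likewise matches the paper's.

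You are right to flag the passage from $\nabla_y\tilde h=0$ holding only $\mu$-a.e.\ to genuine constancy in $y$ as the delicate step (the paper glosses over it), but your proposed fix is flawed. The assertion ``a real-analytic function on $\R^s$ whose gradient vanishes on a set containing an accumulation point is globally constant'' is false for $s\geq 2$, which is the regime of the paper: take $f(y)=y_1^2$, whose gradient vanishes on the entire hyperplane $\{y_1=0\}$ without $f$ being constant. The multivariable identity theorem requires the zero set to have nonempty interior (or positive Lebesgue measure), not merely an accumulation point, so your argument only works if the fibres $\mu_{y|x}$ are absolutely continuous---an assumption nowhere available. Your fallback via path-connectedness of $\supp\mu_{y|x}$ fares no better: knowing $\nabla_y\tilde h=0$ on a closed set with empty interior says nothing about the \emph{values} of $\tilde h$ there (two isolated critical points can sit at different heights). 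So while your overall strategy coincides with the paper's, neither your write-up nor the paper's actually closes this gap.
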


\begin{proof}
We have
\begin{align*}
     0
     = &
     \int 
        (p(x,x') - q_{}(y,y'))
        \frac{y - y'}{1 + \lVert y - y' \rVert^2}
    \, d\mu(x',y')
    \\
    = &
    \int 
        p(x,x') \frac{y-y'}{1 + \lVert y - y' \rVert^2}
    \, d\mu 
    -
    \frac{  
        \int 
            \frac{
                y - y'
            }{
                (1 + \lVert y - y' \rVert^2)^2
            }
        \, d\mu
    }{
        \iint 
            \frac{1}{1 + \lVert y - y' \rVert^2} 
        \, d\mu d\mu
    }
    \\
    = &
    \frac{1}{2} \nabla_y
    \left(
        \int 
            p(x,x') \log(1 + \lVert y - y' \rVert^2)
        \, d\mu 
        +
        \frac{
            \int 
                (1 + \lVert y - y' \rVert^2)^{-1} 
            \, d\mu
        }{
            \iint 
                (1 + \lVert y - y' \rVert^2)^{-1} 
            \, d\mu d\mu
        } 
    \right).
\end{align*}

It follows that there exists $h_{\mu}:\R^d \to \R$ such that
\[
    \int 
        p(x,x') \log(1 + \lVert y - y' \rVert^2)
    \, d\mu 
    +
    \frac{
        \int 
            (1 + \lVert y - y' \rVert^2)^{-1} 
        \, d\mu
    }{
        \iint 
            (1 + \lVert y - y' \rVert^2)^{-1} 
        \, d\mu d\mu
    } 
    =
    h_{\mu}(x).
\]

To show that $h_{\mu}$ is continuous, it suffices to show that $\int p(x,x') \log(1 + \lVert y - y' \rVert^2) \, d\mu$ is continuous in $x$.
Let $\varepsilon > 0$ and suppose that $x_n \to x$.
Since $p$ is continuous, and $\mu_X$ has compact support, we have
\[
    \iint \log(1 + \lVert y - y' \rVert^2) \, d\mu d\mu
    \leq 
    C \iint 
        p(x,x') \log(1 + \lVert y - y' \rVert^2)
    \, d\mu d\mu
    <
    \infty.
\]
In particular, for almost every $y$, $\int \log(1 + \lVert y - y' \rVert^2) \, d\mu < \infty$.

Since $p(x,x')$ is continuous and $\mu_X$ has compact support, $p(x,x')$ is uniformly continuous.
In particular, there exists $\delta = \delta(y, \mu)$ such that if $\lVert x_n - x \rVert < \delta$ then 
\[
    \sup_{x'} \lvert p(x_n, x') - p(x, x') \rvert
    <
    \frac{
        \varepsilon
    }{
        \int \log(1 + \lVert y - y' \rVert^2) \, d\mu
    }.
\]
In particular,
\[
    \left\vert 
        \int 
            (p(x_n, x') - p(x,x'))
            \log(1 + \lVert y - y' \rVert^2)
        \, d\mu
    \right\vert
    < 
    \varepsilon.
\]
\end{proof}

\begin{proof}[Proof of Theorem \ref{thm2}]
Define $D_n = \{ \lVert y \rVert \geq n\}$.
If $\mu^*$ does not have compact support in $Y$ then $\mu^*(D_n) > 0$ for all $n$.

By Lemma \ref{lemma:plog(1+(y-y')^2)-decomposition}, there exists continuous $h(x)$ such that
\[
    \int p\log(1 + \lVert y - y' \rVert^2) \, d\mu^*
    =
    h(x) 
    -
    \frac{
        \int (1 + \lVert y - y' \rVert^2)^{-1} \, d\mu^*
    }{
        \iint (1 + \lVert y - y' \rVert^2)^{-1} \, d\mu^* d\mu^*
    }.
\]
In particular,
\[
    \iint 
        p \log(1 + \lVert y - y' \rVert^2) 
        \frac{\1_{D_n}(x,y)}{\mu^*(D_n)}
    \, d\mu^* d\mu^*
    =
    \int 
        h(x) \frac{\1_{D_n}(x,y)}{\mu^*(D_n)} 
    \, d\mu^*
    +
    \frac{
        \iint 
            (1 + \lVert y - y' \rVert^2)^{-1} 
            \frac{\1_{D_n}(x,y)}{\mu^*(D_n)}            
        \, d\mu^* d\mu^*
    }{
        \iint 
            (1 + \lVert y - y' \rVert^2)^{-1} 
        \, d\mu^* d\mu^*
    }.  
\]
Since $h$ is continuous, and $\mu_X$ has compact support, the first term of the RHS is bounded uniformly in $n$.
Moreover, $(1 + \lVert y - y' \rVert^2)^{-1}$ is bounded, so the second term of the RHS is also bounded uniformly in $n$.
It follows that the LHS is bounded uniformly in $n$.

Now, since $p(x,x') > 0$ and $\mu_X$ has compact support, there exists a constant $c > 0$ such that $p(x,x') \geq c$ for all $x, x'$ in the support of $\mu_X$.
Hence
\begin{align*}
    & 
    \frac{1}{\mu^*(D_n)}
    \iint 
        p \log(1 + \lVert y - y' \rVert^2) 
        \1\{\lVert y \rVert \geq n\}
    \, d\mu^* d\mu^*
    \\
    \geq &
    \frac{1}{\mu^*(D_n)}
    \iint 
        p \log(1 + \lVert y - y' \rVert^2) 
        \1\{\lVert y \rVert \geq n\}
        \1\{\lVert y - y' \rVert \geq n/2\}
    \, d\mu^* d\mu^*
    \\
    \geq & 
    c \frac{\log(1 + n^2/4)}{\mu^*(D_n)}
    \iint 
        \1\{\lVert y \rVert \geq n\}
        \1\{\lVert y' \rVert \leq n/2\}
    \, d\mu^* d\mu^*
    \\
    = & 
    c \log(1 + n^2/4) \mu^{*}(\lVert Y' \rVert \leq n/2)
    \\
    \geq &
    \frac{c}{2} \log(1 + n^2/4).
\end{align*}
Taking $n \to \infty$, we get a contradiction.
Hence it must be the case that $\mu^*(D_n) = 0$ for $n$ sufficiently large.
That is, $\mu^*(\lVert Y \rVert \geq n) = 0$, so $\mu^*$ has compact support in $Y$.
\end{proof}

\bibliographystyle{plain}
\bibliography{refs}

\begin{thebibliography}{10}

\bibitem{pmlr-v75-arora18a}
Sanjeev Arora, Wei Hu, and Pravesh~K. Kothari.
\newblock An analysis of the t-sne algorithm for data visualization.
\newblock In Sébastien Bubeck, Vianney Perchet, and Philippe Rigollet,
  editors, {\em Proceedings of the 31st Conference On Learning Theory},
  volume~75 of {\em Proceedings of Machine Learning Research}, pages
  1455--1462. PMLR, 06--09 Jul 2018.

\bibitem{BLMbook}
Stephane Boucheron, Gabor Lugosi, and Pascal Massart.
\newblock {\em {Concentration inequalities. A nonasymptotic theory of
  independence}}.
\newblock {Oxford University Press}, 2013.

\bibitem{cai-ma-2022}
Tony Cai and Rong Ma.
\newblock Theoretical foundations of t-sne for visualizing high-dimensional
  clustered data.
\newblock arXiv preprint, \url{https://arxiv.org/abs/2105.07536}, 2022.

\bibitem{book2}
Benyamin Ghojogh, Mark Crowley, Fakhri Karray, and Ali Ghodsi.
\newblock {\em Elements of Dimensionality Reduction and Manifold Learning}.
\newblock Springer, 2023.

\bibitem{engineering-example}
Parisa Hajibabaee, Farhad Pourkamali-Anaraki, and Mohammad~Amin
  Hariri-Ardebili.
\newblock An empirical evaluation of the t-sne algorithm for data visualization
  in structural engineering.
\newblock 2021.

\bibitem{hinton2002stochastic}
Geoffrey~E Hinton and Sam Roweis.
\newblock Stochastic neighbor embedding.
\newblock {\em Advances in neural information processing systems}, 15, 2002.

\bibitem{KMJ}
Eric~M. Johnson, William Kath, and Madhav Mani.
\newblock Embedr: Distinguishing signal from noise in single-cell omics data.
\newblock {\em Patterns}, 3(3):1--14, 2022.

\bibitem{biology-example2}
Dmitry Kobak and Philipp Berens.
\newblock The art of using t-sne for single-cell transcriptomics.
\newblock {\em Nature communications}, 10(1):1--14, 2019.

\bibitem{biology-example}
W.~Li, J.E. Cerise, Y.~Yang, and H.~Han.
\newblock Application of t-sne to human genetic data.
\newblock {\em Journal of bioinformatics and computational biology}, 15(4),
  2017.

\bibitem{Linderman2019}
George~C. Linderman, Manas Rachh, Jeremy~G. Hoskins, Stefan Steinerberger, and
  Yuval Kluger.
\newblock Fast interpolation-based t-sne for improved visualization of
  single-cell rna-seq data.
\newblock {\em Nature Methods}, 16(3):243–245, 2019.

\bibitem{linderman-steinerberger}
George~C. Linderman and Stefan Steinerberger.
\newblock Clustering with t-sne, provably.
\newblock {\em SIAM Journal on Mathematics of Data Science}, 1(2):313--332,
  2019.

\bibitem{SSArxiv}
Uri Shaham and Stefan Steinerberger.
\newblock Stochastic neighbor embedding separates well-separated clusters.
\newblock {\em arXiv:1702.02670}, 2017.

\bibitem{JMLR:v15:vandermaaten14a}
Laurens van~der Maaten.
\newblock Accelerating t-sne using tree-based algorithms.
\newblock {\em Journal of Machine Learning Research}, 15(93):3221--3245, 2014.

\bibitem{JMLR:v9:vandermaaten08a}
Laurens van~der Maaten and Geoffrey Hinton.
\newblock Visualizing data using t-sne.
\newblock {\em Journal of Machine Learning Research}, 9(86):2579--2605, 2008.

\bibitem{physics-example}
XueGuang Zhang, Yanqiu Feng, Huan Chen, and QiRong Yuan.
\newblock Powerful t-sne technique leading to clear separation of type-2 agn
  and h ii galaxies in bpt diagrams.
\newblock {\em The Astrophysical Journal}, 905(2), 2020.

\end{thebibliography}
    
\appendix

\section{}
\label{appendix:convergence-of-discrete-gradient}

Suppose that $\mu_X$ is sub-Gaussian, let $X_1, \dots, X_n$ be i.i.d. random variables drawn from $\mu_X$, and let $Y^* = (Y_1, \dots, Y_n)$ be the output given by t-SNE.
Let
\(
    \mu_n = \frac{1}{n} \sum_i \delta_{(X_i, Y_i)}
\)
and suppose that $\mu_n$ converges weakly to $\mu$.

Since $Y^*$ is a minimizer of $L_{n,\rho}(X,Y)$ we have
\begin{equation}\label{equation:discrete-gradient-is-zero}
    \frac{\partial L_{n,\rho}(X,Y)}{\partial y_i}
    =
    \sum_{j=1, j \neq i}^n
        (p_{ij} - q_{ij})
        \frac{Y_i - Y_j}{1 + \lVert Y_i - Y_j \Vert^2}
    = 0
\end{equation}
for all $i$.

\begin{prop}\label{prop:appendix-prop}
Suppose that $\mu_X$ has either compact support and a $C^1$ density $f(x)$ or $\mu_X$ has sub-Gaussian tails.
Then for $\mu$-a.e. $x$ and $y$ we have
\[
    \int 
        (p(x,x') - q(y, y'))
        \frac{y - y'}{1 + \lVert y - y' \rVert^2}
    \, d\mu(x', y')
    =
    0.
\]
\end{prop}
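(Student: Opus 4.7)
The plan is to pass to the limit in the first-order optimality condition \eqref{equation:discrete-gradient-is-zero}. Fix a bounded continuous test function $\phi\colon \mathbb R^d \times \mathbb R^s \to \mathbb R$. Multiplying the vanishing-gradient identity at each $Y_i$ by $\phi(X_i, Y_i)/n$ and summing over $i$ yields
\[
    0 = \frac{1}{n}\sum_{i=1}^n \phi(X_i, Y_i) \sum_{j \neq i} (p_{ij} - q_{ij}) \frac{Y_i - Y_j}{1 + \lVert Y_i - Y_j \rVert^2}.
\]
Using the identities $p_{ij} = \frac{1}{n^2}\, p_n(X_i, X_j)$ and $q_{ij} = \frac{1}{n^2}\, q_n(Y_i, Y_j)$ (the latter being valid because $g(y,y)=1$ produces exactly the $-1/n$ subtraction in the definition of $q_n$ from Section \ref{subsec:convergence-lemmas-for-p}), and observing that the kernel $K(y,y') := (y-y')/(1+\lVert y-y' \rVert^2)$ vanishes on the diagonal, this identity rewrites as
\[
    0 = \iint \phi(x,y)\,\big[ p_n(x,x') - q_n(y,y') \big]\, K(y,y')\, d\mu_n(x,y)\, d\mu_n(x',y').
\]

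The heart of the proof is to show that this double integral converges to
\[
    \iint \phi(x,y)\,\big[ p(x,x') - q(y,y') \big]\, K(y,y')\, d\mu(x,y)\, d\mu(x',y').
\]
I would handle this by splitting the $p$-part into $\iint \phi\,(p_n - p)\,K\, d\mu_n d\mu_n$ and $\iint \phi\,p\,K\, d\mu_n d\mu_n - \iint \phi\,p\,K\, d\mu d\mu$, and analogously for the $q$-part. The first of these vanishes in the limit by the uniform estimate $\sup_{i,j}|p_n(X_i,X_j) - p(X_i,X_j)| = O(n^{-\epsilon})$ from Lemma \ref{lemma:lower-bound-p-comparison}, combined with the uniform bound $|K| \leq 1/2$ and boundedness of $\phi$. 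The second vanishes by the weak convergence $\mu_n \otimes \mu_n \rightharpoonup \mu \otimes \mu$ applied to the bounded continuous test function $\phi(x,y)\, p(x,x')\, K(y,y')$; continuity and boundedness of $p$ on the relevant region follow from smoothness of $\sigma^*_{\rho,\mu_X}$ (Proposition \ref{prop:uniqueness_of_sigma}) and the uniform lower bound on the Gaussian normalization integrals recorded in Section \ref{subsec:technical-bounds}. The $q$ pieces are analogous but easier: since $g=(1+\lVert y-y' \rVert^2)^{-1}$ is bounded and continuous, weak convergence of the $Y$-marginals gives $q_n \to q$ uniformly.

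Once the limit identity
\[
    \int \phi(x,y)\, G(x,y)\, d\mu(x,y) = 0,\qquad
    G(x,y) := \int \big(p(x,x') - q(y,y')\big)\, K(y,y')\, d\mu(x',y'),
\]
has been established via Fubini for every bounded continuous $\phi$, a standard density argument (bounded continuous functions are dense in $L^{1}(\mu)$ on Polish spaces) shows that $G = 0$ for $\mu$-a.e.\ $(x,y)$, which is exactly the claim $\mu \in \widetilde{\calP}_X$.

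The main obstacle will be the uniform-convergence step in the sub-Gaussian case, where $\max_i \lVert X_i \rVert \lesssim \sqrt{\log n}$ and so $p_n, p$ are evaluated on an unbounded set. Here one must invoke the quantitative bounds of Section \ref{subsec:technical-bounds}, choosing the perplexity window $[\delta, \sqrt{n}]$ with $c_1\delta^2$ sufficiently large so that the denominators of $p_n$ and $q_n$ remain bounded below by a negative power $n^{-O(1/c_1\delta^2)}$, and trading this loss against the $n^{-\epsilon}$ decay of the numerators. In the compact-support case this step is essentially immediate from Lemma \ref{lemma:lower-bound-p-comparison}. A secondary technical subtlety is verifying that the diagonal $1/n$ correction in the normalization of $q_n$ (which is what allows the identification $q_{ij} = \tfrac{1}{n^2}q_n(Y_i,Y_j)$) does not spoil the limit; this follows by the same argument used in Lemma \ref{lemma:Z_q_n-convergence}.
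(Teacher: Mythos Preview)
Your proposal is correct and follows essentially the same route as the paper: test against an arbitrary bounded function, rewrite the vanishing-gradient condition as a double integral against $\mu_n\otimes\mu_n$, replace $p_n,q_n$ by $p,q$ via Lemma~\ref{lemma:lower-bound-p-comparison} and the convergence of the $q$-normalization, and then pass to the limit $\mu_n\to\mu$. The only cosmetic difference is that for this last step the paper packages the argument into two auxiliary lemmas (Lemmas~\ref{lemma:calP-p-convergence} and~\ref{lemma:calP-q-convergence}) that mimic the Rademacher/equicontinuity proofs of Lemmas~\ref{lemma:lower-bound-plogp-integral-convergence} and~\ref{lemma:Z_q_n-convergence}, whereas you invoke weak convergence of $\mu_n\otimes\mu_n$ directly; in the compact-support case your version is cleaner, and your discussion of the sub-Gaussian obstacle matches what those lemmas are designed to handle.
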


\begin{proof}
Let $\alpha(x,y)$ be a test function.
It suffices to show that
\[
    \iint   
        \alpha(x,y)
        (p(x,x') - q(y, y'))
        \frac{y - y'}{1 + \lVert y - y' \rVert^2}
    \, d\mu d\mu
    =
    0.
\]

Recalling that definitions of $p_n, q_n$, we see from (\ref{equation:discrete-gradient-is-zero}) that, for all $i$,
\[
    \frac{1}{n^2}
    \sum_{j=1, j \neq i}^n 
        (p_n(X_i, X_j) - q_n(Y_i, Y_j))
        \frac{Y_i - Y_j}{1 + \lVert Y_i - Y_j \rVert^2}
    = 0.
\]
In particular, 
\begin{align*}
    &
    \iint 
        \alpha(x,y)
        (p_n(x,x') - q_n(y, y'))
        \frac{y - y'}{1 + \lVert y - y' \rVert^2}
    \, d\mu_n d\mu_n
    \\
    = &
    \frac{1}{n^2}
    \sum_{1\leq i,j \leq n}
        \alpha(X_i, Y_i)
        (p_n(X_i, X_j) - q_n(Y_i, Y_j))
        \frac{Y_i - Y_j}{1 + \lVert Y_i - Y_j \rVert^2}
    =
    0.
\end{align*}
From Lemma \ref{lemma:lower-bound-p-comparison} and Lemma \ref{lemma:Z_q_n-convergence}, we have
\(
    p_n(X_i, X_j) - q_n(Y_i, Y_j)
    =
    p(X_i, X_j) - q(Y_i, Y_j)
    +
    o(1).
\)
Since $\alpha$ and $\frac{y - y'}{1 + \lVert y - y' \rVert^2}$ are bounded, it follows that
\begin{align*}
    &
    \iint 
        \alpha(x,y)
        (p_n(x,x') - q_n(y, y'))
        \frac{y - y'}{1 + \lVert y - y' \rVert^2}
    \, d\mu_n d\mu_n
    \\
    = &
    \iint 
        \alpha(x,y)
        (p(x,x') - q(y, y'))
        \frac{y - y'}{1 + \lVert y - y' \rVert^2}
    \, d\mu_n d\mu_n
    +
    o(1).
\end{align*}
By Lemma \ref{lemma:calP-p-convergence} and 
Lemma \ref{lemma:calP-q-convergence} below, we have
\begin{align*}
    & \iint 
        \alpha(x,y)
        (p(x,x') - q(y, y'))
        \frac{y - y'}{1 + \lVert y - y' \rVert^2}
    \, d\mu_n d\mu_n
    \\
    = &
    \iint 
        \alpha(x,y)
        (p(x,x') - q(y, y'))
        \frac{y - y'}{1 + \lVert y - y' \rVert^2}
    \, d\mu d\mu
    + o(1).
\end{align*}
So, since $\alpha$ was an arbitrary test function, 
\[
     \int 
        (p(x,x') - q(y, y'))
        \frac{y - y'}{1 + \lVert y - y' \rVert^2}
    \, d\mu(x', y')
    =
    0
\]
for $\mu$ almost every $(x,y)$ as required.
\end{proof}

\begin{lemma}\label{lemma:calP-p-convergence} The following holds as $n\to \infty$,
\[
    \iint 
        \alpha(x,y)
        p(x,x')
        \frac{y - y'}{1 + \lVert y - y' \rVert^2}
    \, d\mu_n d\mu_n
    =
    \iint 
        \alpha(x,y)
        p(x,x')
        \frac{y - y'}{1 + \lVert y - y' \rVert^2}
    \, d\mu d\mu
    +
    o(1).
\]
\end{lemma}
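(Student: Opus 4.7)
The plan is to recognize this as essentially a direct application of the weak convergence $\mu_n \rightharpoonup \mu$, provided we verify that the integrand $F(x,y,x',y') := \alpha(x,y) \, p(x,x') \, \frac{y-y'}{1+\lVert y-y'\rVert^2}$ is bounded and continuous on (a set that contains) the joint support of $\mu_n \otimes \mu_n$ and $\mu \otimes \mu$. Note that the same function $p$ — which depends implicitly on the limiting $\mu_X$ via $\sigma^*_{\rho,\mu_X}$ — appears on both sides, so no replacement of $p_n$ by $p$ is needed here; this is purely a weak-convergence statement for a fixed integrand.

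First, I would examine each factor. The test function $\alpha(x,y)$ is bounded continuous by assumption, and the kernel $\frac{y-y'}{1+\lVert y-y'\rVert^2}$ has Euclidean norm uniformly bounded by $1/2$ and is continuous on $\R^s\times\R^s$. The nontrivial factor is $p(x,x')$. By Proposition \ref{prop:uniqueness_of_sigma}, $\sigma^* = \sigma^*_{\rho,\mu_X}$ is smooth; Lemma \ref{lemma:sigma_uniform_lower_bound} gives a uniform lower bound, and since $\mu_X$ has compact support and $F_{\rho,\mu_X}(x,\sigma)\to\log\rho<0$ as $\sigma\to\infty$, continuity also yields a uniform upper bound on $\sigma^*$ over $\supp(\mu_X)$. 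Hence the exponential $\exp(-\lVert x-x'\rVert^2/2\sigma^*(x)^2)$ is bounded away from $0$ and $1$ on $\supp(\mu_X)\times\supp(\mu_X)$, and the normalizing integrals $\int \exp(-\lVert x-x'\rVert^2/2\sigma^*(x)^2)\,d\mu_X(x')$ are likewise bounded away from $0$. It follows that $p$ is continuous and bounded on $\supp(\mu_X)\times\supp(\mu_X)$.

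Next, since $\mu_n,\mu\in\calP_X$, both are supported on $\supp(\mu_X)\times\R^s$, so I can replace $p$ by $\chi(x)\chi(x')\,p(x,x')$ with $\chi$ a continuous compactly supported cutoff equal to $1$ on $\supp(\mu_X)$, without altering any of the integrals. This modified integrand $F$ is then bounded continuous on the full space $(\R^d\times\R^s)^2$. Finally, weak convergence $\mu_n\rightharpoonup\mu$ implies weak convergence of the product measures $\mu_n\otimes\mu_n\rightharpoonup\mu\otimes\mu$ (a standard consequence of the portmanteau theorem applied to separable metric spaces, or checked directly on products of bounded continuous functions), so pairing against the bounded continuous vector-valued integrand $F$ gives the claim componentwise.

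The only mild obstacle is the boundedness verification for $p$, which rests on the two-sided control of $\sigma^*$ on $\supp(\mu_X)$; everything else is bookkeeping. No uniform integrability argument is required because we are dealing with compact support in $x$ and a globally bounded kernel in $y$.
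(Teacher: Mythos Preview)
Your argument is correct and takes a genuinely different route from the paper. The paper simply writes ``mimic the proof of Lemma \ref{lemma:lower-bound-plogp-integral-convergence}'', which in turn points back to the concentration/symmetrization machinery of Lemmas \ref{lemma:general-uniform-convergence} and \ref{lemma:convergence-of-log-f}, built for i.i.d.\ inputs. You instead use the assumed weak convergence $\mu_n\rightharpoonup\mu$ directly, lift it to $\mu_n\otimes\mu_n\rightharpoonup\mu\otimes\mu$, and pair against a bounded continuous integrand. This is arguably the more natural argument here: the integrand depends on $(y,y')$ as well as $(x,x')$, and since the $Y_i$ are not i.i.d., the concentration route of Lemma \ref{lemma:general-uniform-convergence} does not apply verbatim to the full integrand, whereas weak convergence of the product handles everything at once.

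One small point to tighten: your cutoff step $\chi(x)\chi(x')\,p(x,x')$ presumes $p$ is defined and continuous on an open neighborhood of $\supp(\mu_X)\times\supp(\mu_X)$, which in turn requires $\sigma^*_{\rho,\mu_X}$ to be defined slightly outside $\supp(\mu_X)$. Proposition \ref{prop:uniqueness_of_sigma} asserts this for all $x\in\R^d$, but its proof (via Lemma \ref{lemma_F_is_unbounded}) is written only for $x$ with $f(x)>0$. You can sidestep this entirely: $p$ is bounded and continuous on the compact set $\supp(\mu_X)\times\supp(\mu_X)$, so Tietze gives a bounded continuous extension to all of $\R^d\times\R^d$, and you may use that extension in place of $\chi(x)\chi(x')p(x,x')$ without changing any of the integrals.
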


\begin{proof}
This follows by mimicking the proof of Lemma \ref{lemma:lower-bound-plogp-integral-convergence}.
\end{proof}

\begin{lemma}\label{lemma:calP-q-convergence} The following holds as $n\to \infty$,
\[
    \iint
        \alpha(x,y)
        q(y,y') 
        \frac{y-y'}{1 + \lVert y - y' \rVert^2}
    \, d\mu_n d\mu_n
    =
    \iint 
        \alpha(x,y)
        q(y,y') 
        \frac{y-y'}{1 + \lVert y - y' \rVert^2}
    \, d\mu d\mu
    +
    o(1).
\]
\end{lemma}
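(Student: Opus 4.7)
My plan is to reduce the claim to weak convergence of product measures applied to a single bounded continuous function, exploiting the crucial fact that $q$ depends only on the \emph{fixed} limit measure $\mu$, not on $\mu_n$. Consequently, in contrast to Lemma \ref{lemma:calP-p-convergence} where one has to replace $p_n$ by $p$ and argue carefully, here there is no $q_n$ at all to handle.

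First I would unfold the definition and write $q(y,y') = Z_\mu^{-1}\,(1 + \lVert y - y' \rVert^2)^{-1}$, where the normalization
\[
    Z_\mu = \iint (1 + \lVert y - y' \rVert^2)^{-1} \, d\mu(y)\,d\mu(y')
\]
is a strictly positive constant (finite because the integrand is bounded by $1$, and positive since $\mu$ is a probability measure and the integrand is strictly positive). Substituting into both sides of the claim, the integrand in both integrals takes the common form
\[
    \phi(x,y,x',y') \;:=\; \frac{\alpha(x,y)}{Z_\mu}\,\cdot\,\frac{y - y'}{(1 + \lVert y - y' \rVert^2)^2}.
\]

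Next I would verify that $\phi$ is bounded and jointly continuous on $(\R^d\times\R^s)^2$. The factor $\alpha(x,y)$ is a test function, hence bounded and continuous; the factor $(y,y')\mapsto (y-y')/(1+\lVert y-y'\rVert^2)^2$ is continuous on $\R^s\times\R^s$ and bounded in norm by the finite quantity $\sup_{r\ge 0} r/(1+r^2)^2$. Therefore $\phi$ is a bounded continuous function on the product space.

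Finally, since $\mu_n \rightharpoonup \mu$ on $\R^d\times\R^s$, the product measures satisfy $\mu_n\otimes\mu_n \rightharpoonup \mu\otimes\mu$ on $(\R^d\times\R^s)^2$, so by the Portmanteau theorem
\[
    \iint \phi \, d\mu_n\,d\mu_n \;\longrightarrow\; \iint \phi \, d\mu\, d\mu,
\]
which is exactly the claimed identity with an $o(1)$ error. There is no real obstacle here: the independence of $q$ from $n$ and the boundedness of the kernel $(y-y')/(1+\lVert y-y'\rVert^2)^2$ remove any need for tail truncation or for the technical estimates used in the $p_n \to p$ arguments of Section \ref{subsec:convergence-lemmas-for-p}.
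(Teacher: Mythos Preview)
Your argument is correct. The integrand $\phi$ is indeed bounded and continuous on $(\R^d\times\R^s)^2$, and since $\mu_n \rightharpoonup \mu$ on a Polish space the product measures satisfy $\mu_n\otimes\mu_n \rightharpoonup \mu\otimes\mu$; applying this componentwise to the $\R^s$-valued $\phi$ gives the claim immediately.

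Your route is genuinely more direct than what the paper suggests. The paper's instruction is to mimic Lemma~\ref{lemma:Z_q_n-convergence}, whose proof proceeds by first truncating to a compact set via tightness, then showing that the inner integral $G_n(y)=\int g(y,y')\1_K\,d\mu_n(y')$ is uniformly equicontinuous on $K$ and converges uniformly to $G(y)$, and finally iterating to pass the outer integral to the limit. That hands-on argument is effectively a special-case reproof of the product weak-convergence fact you invoke in one line. Your observation that the normalization $Z_\mu$ in $q$ is the \emph{fixed} constant coming from the limit measure (so there is no $q_n$ to compare to $q$) is exactly what makes the integrand a single bounded continuous function and allows the shortcut. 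The paper's approach has the minor advantage of being self-contained and parallel to the surrounding lemmas, but yours is cleaner and requires no additional estimates.
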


\begin{proof}
This follows by mimicking the proof of Lemma \ref{lemma:Z_q_n-convergence}.
\end{proof}

\end{document}